\numberwithin{equation}{section}
\newcommand{\matgrp}[3]{\ensuremath{#1^{#2 \times #3}}}
\newcommand{\matring}[2]{\matgrp{#1}{#2}{#2}}
\newcommand{\F}{\ensuremath{\textup{\textsf{F}}}}
\newcommand{\Z}{\ensuremath{\mathbb{Z}}}
\newcommand{\Q}{\ensuremath{\mathbb{Q}}}
\newcommand{\R}{\ensuremath{\mathbb{R}}}
\theoremstyle{plain}
\newtheorem{theorem}{Theorem}[section]
\newtheorem{lemma}[theorem]{Lemma}
\newtheorem{corollary}[theorem]{Corollary}
\theoremstyle{definition}
\begin{document}
\title{Black Box Linear Algebra: Extending Wiedemann's Analysis of a Sparse Matrix
  Preconditioner for Computations over Small Fields}
\author{Wayne Eberly \\% \thanks{Research Supported in part by the Natural Sciences 
       % and Engineering Research Council of Canada research grant OGP0089756.}\\
       Department of Computer Science\\
       University of Calgary}
\maketitle

\begin{abstract}
Wiedemann's paper, introducing his algorithm for sparse and structured matrix computations over
arbitrary fields, also presented
a pair of matrix preconditioners for computations over small fields. The analysis of the second of these is
extended in order to provide more explicit statements of the expected number of nonzero entries in the matrices
obtained as well as bounds on the probability that such matrices have maximal rank.

This is part of ongoing work to
establish that this matrix preconditioner can also be used to bound the number of nontrivial nilpotent blocks in the Jordan
normal form of a preconditioned matrix, in such a way that one can also sample uniformly from the null space
of the originally given matrix. If successful this will result in a black box algorithm for the type of matrix computation
required when using the number field sieve for integer factorization that is provably reliable and --- by
a small factor --- asymptotically more efficient than alternative techniques that make use of other matrix
preconditioners or require computations over field extensions.
\end{abstract}

\section{Introduction}

Suppose that $\F = \F_q$ is a finite field with size~$q$. Let $m$ and~$n$ be integers such that $0 \le m \le n$.
The paper that introduced Wiedemann's algorithm~\cite{wied86} also includes a proof of the following claim ---
which concerns an $n \times n$ matrix obtained by appending an additional set of row vectors to a matrix
$A \in \matgrp{\F}{m}{n}$ with maximal rank:\footnote{Wiedemann attributes much of the proof of this claim to
an anonymous referee who is thanked for allowing this work to be included.}

\begin{quote}
\textbf{Theorem~$\text{\textbf{1}}'$ [Wiedemann]}:
\emph{Numbers~$\epsilon > 0$ and~$c_1$ exist, both independent of~$q$, with
the following property: For any integers $n > m \ge 0$ a random procedure exists for generating $n-m$ row vectors
with length~$n$ such that if $A$ is an $m \times n$ matrix of rank~$m$, then with probability at least~$\epsilon$,
the resulting $n \times n$ matrix is nonsingular and the total Hamming weight of the generated rows is at most
$1 + c_1 n \log n$.}
\end{quote}

Unfortunately the unknown constants~$\epsilon$ and~$c_1$ are neither supplied nor estimated. Furthermore, it
seems that if the proof in~\cite{wied86} is applied without change in order to determine these values then
either~$c_1$ must be so large or $\epsilon$ so tiny that the result is of limited practical interest.

This is, somewhat, rectified in Section~\ref{sec:modified_proof}: While the outline of Wiedemann's argument
is maintained, along with the details of several steps, several other components are revised or replaced 
entirely in order to remove unnecessary bounds on various parameters and to simplify the estimation of
the unknown parameters~$\epsilon$ and~$c_1$. The bound~$\epsilon$ is also increased by adding 
another $\ell$ rows to the resulting matrix~$B \in \matgrp{\F}{(n+\ell)}{n}$ rows; here, $\ell$ depends on
the size of the field~$\F$. In particular, the following result is obtained.

\begin{theorem}
\label{thm:constants_for_conditioner}
Let $\F = \F_q$ be the finite field with size~$q$. Let $m$ and~$n$ be integers such that $n \ge m \ge 0$.
Let $\ell$ be a nonnegative integer and let $\sigma$, $\tau$ and $\upsilon$ be positive constants (depending on~$q$,
but independent of~$n$ and~$m$) as given in Table~\ref{fig:summary_table} on
page~\pageref{fig:summary_table}.

A random procedure exists for generating $n - m + \ell$ rows with  length~$n$ such that if $A$ is an $m \times n$
matrix of rank~$m$, then an additional $m + \ell$ rows (each with length~$n$) are produced, and the expected
number of nonzero entries in these rows is $\sigma n \ln n + \tau n$ if $n - m \ge \upsilon$, and at most $\frac{q-1}{q} (n-m+\ell) n
\le \frac{q-1}{q} (\upsilon + \ell) n$, otherwise.

If $q \le n^2$ then the matrix $B \in \F^{(n+\ell) \times n}$ obtained from~$A$ by adding these rows has maximal
rank~$n$ with probability at least~$\frac{9}{10}$. If $q > n^2$ then this matrix has maximal rank with probability
at least $\frac{9}{10} - \frac{9}{10n}$.
\end{theorem}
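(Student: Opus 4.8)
The plan is to follow the outline of Wiedemann's proof of Theorem~$1'$, reducing the rank assertion to a statement about $\mathrm{null}(A)$ and then estimating a union bound over Hamming-weight classes, but with the distributions of the generated rows and of the $\ell$ extra rows pinned down concretely so that $\sigma$, $\tau$, $\upsilon$ and $\ell$ come out explicitly. Write $B$ for the matrix obtained from $A$ by appending the $n-m+\ell$ generated rows, and let $V\in\matgrp{\F}{(n-m+\ell)}{n}$ collect those rows. Since $A$ has rank~$m$, the space $N:=\{x:Ax=0\}$ has dimension $n-m$, and $\rank{B}=n$ holds exactly when $V$ is injective on $N$, i.e.\ when no nonzero $x\in N$ satisfies $Vx=0$; so it suffices to bound $\Pr[\,\exists\,x\in N\setminus\{0\}:Vx=0\,]$ by $\tfrac{1}{10}$ when $q\le n^2$ and by $\tfrac{1}{10}+\tfrac{9}{10n}$ otherwise, and I would do this using only $|N|=q^{n-m}$ and $|\{x\in N:\mathrm{wt}(x)=w\}|\le\min\{\binom{n}{w}(q-1)^w,\;q^{n-m}-1\}$, so that nothing else about $A$ is used. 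When $n-m<\upsilon$ I would take the $n-m+\ell$ generated rows independent and uniform over $\F^n$; restricted to a basis of $N$ this makes $V|_N$ uniform over $\matgrp{\F}{(n-m+\ell)}{(n-m)}$, hence injective with probability $\prod_{j=\ell+1}^{n-m+\ell}(1-q^{-j})\ge\prod_{j\ge\ell+1}(1-q^{-j})$, and I would fix $\ell=\ell(q)$ making this infinite product at least $\tfrac{9}{10}$; the expected weight is then $\tfrac{q-1}{q}(n-m+\ell)n\le\tfrac{q-1}{q}(\upsilon+\ell)n$, with no constraint on~$q$.

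The real work is the case $n-m\ge\upsilon$. Here the $n-m+\ell$ rows are built as a structured (low-weight) component $S$, together with a random sparse group and the $\ell$ extra rows. The component $S$ consists of $O(\log n)$ rows — so its total weight may be taken to be $O(n\log n)$ — and is chosen so that $Sx\neq0$ for every nonzero $x$ with $\mathrm{wt}(x)\le w_0$, where $w_0=w_0(q)$ is a threshold fixed below; this reduces the union bound to vectors of weight exceeding $w_0$. Each of the $\approx n-m$ random sparse rows $v$ is generated by setting each coordinate, independently, to a uniform element of $\F$ with probability $p:=\sigma\ln n/n$ and to $0$ otherwise. Because the marked entries of $v$ are uniform over $\F$ rather than over $\F\setminus\{0\}$, conditioned on the marked set meeting $\mathrm{supp}(x)$ the inner product $v\cdot x$ is exactly uniform over $\F$; hence $\Pr[v\cdot x=0]=\tfrac{1}{q}+\tfrac{q-1}{q}(1-p)^{\mathrm{wt}(x)}$ exactly, and the expected weight of $v$ is $\tfrac{q-1}{q}\sigma\ln n\le\sigma\ln n$. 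Finally the $\ell$ extra rows are uniform over $\F^n$, contributing (essentially) a factor $q^{-\ell}$ to every collision probability. Adding the expected weights over the three groups gives an expected total of at most $\sigma n\ln n+\tau n$ for appropriate constants $\sigma,\tau$ depending on $q$ (for $\upsilon\le n-m$ too small for $S$ to fit one uses dense rows instead, whose weight is still $O(n\log n)$).

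For the rank bound in this case I would split the residual union bound over $x\in N$ with $\mathrm{wt}(x)=w>w_0$ according to the size of $\binom{n}{w}(q-1)^w$. Where $\binom{n}{w}(q-1)^w\le q^{n-m}$ I would bound the number of candidate vectors by this binomial quantity and use, for each, the per-row estimate $\tfrac{1}{q}+\tfrac{q-1}{q}(1-p)^{w}$; choosing $w_0$ so that $(1-p)^{w_0}$ is small relative to $1/(q(n-m+\ell))$ makes each sparse row's contribution essentially $1/q$, so that the full product beats $\binom{n}{w}(q-1)^w$ once $\ell$ is large enough to absorb the $(q/(q-1))^{n-m}$-type slack between $1/q$ and the true probability. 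Where $\binom{n}{w}(q-1)^w>q^{n-m}$ I would instead bound the count by $q^{n-m}$ and use $\binom{n}{w}(q-1)^w\le(enq/w)^w$ with the stronger decay $(1-p)^{w}\le e^{-pw}$, which is where $\sigma$ must be large enough to dominate $\log\bigl(\binom{n}{w}(q-1)^w\bigr)$. The hypothesis $q\le n^2$ enters by keeping $w_0$ and $\sigma$ able to control quantities of order $\tfrac{n}{\sigma\ln n}\ln q=O(n)$ and by keeping ``one value out of $q$'' factors at least $n^{-2}$; when $q>n^2$ one replaces these by the cruder bound $q^{-1}<n^{-2}$, which costs an additive $O(1/n)$ overall and yields $\tfrac{9}{10}-\tfrac{9}{10n}$.

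The main obstacle I expect is exactly this calibration: choosing $\sigma,\tau,\upsilon,w_0$ and $\ell$ as functions of $q$ alone so that, simultaneously, the structured component genuinely annihilates every nonzero vector of weight at most $w_0$ within $O(n\log n)$ nonzero entries, the weight-class sum over $(w_0,n]$ stays below $\tfrac{1}{10}$ uniformly in~$m$, and no constant covertly depends on $n$ or $m$. The delicate inequality is the one for $w$ near~$n$, where there can be $q^{n-m}$ candidate vectors but only about $n-m$ sparse rows, each cutting the count by only a factor close to $q$, so that the $q^{-\ell}$ contributed by the extra rows must exactly cover the gap between $1/q$ and the true per-row collision probability; pinning down an honest, $q$-explicit value of $w_0$ for which the structured component is realizable at $O(n\log n)$ weight is the piece most likely to need care.
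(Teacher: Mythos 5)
Your proposal takes a genuinely different route from the paper's, and — beyond the calibration issues you flag yourself — it contains a gap in the structure of the sparse sampling that cannot be fixed by adjusting constants.

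\textbf{How the paper actually proceeds.} The paper stays close to Wiedemann. It writes $n = m + k + c_2$ with $c_2$ a constant, generates $k$ sparse rows by setting each entry to zero with probability $z = 1 - \frac{c_3\ln n}{k}$ (and uniformly over $\F$ otherwise), and then appends $c_2 + \ell$ fully dense rows. The per-entry ``marking'' probability $1-z = \frac{c_3\ln n}{k}$ therefore \emph{scales inversely with the number of sparse rows generated}: as $k$ decreases, each sparse row becomes proportionally denser, degenerating to fully dense rows at $k = c_3\ln n$. The reliability analysis is carried out in the primal picture: it bounds the probability $\rho$ that the rows of $A$ together with the $k$ sparse rows are linearly dependent, summing over elements of the row space of $A$ (controlled via Wiedemann's weight-enumerator Propositions~1 and~2) and over the nonzero coefficient vectors of a putative dependency relation, with $\beta = j/k$ the fraction of nonzero coefficients. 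There is no deterministic ``structured component'' anywhere.

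\textbf{The gap in the flat marking probability.} You take $p = \sigma\ln n/n$ \emph{fixed independently of $k$}. This fails when $n-m$ (hence $k$) is far from $n$ but still well above $\upsilon$. Take, for instance, $A = [\,I_m \mid 0\,]$ with $n - m = \lfloor n/\ln n\rfloor$, so that $\ker(A) = \{0\}^m \times \F^{\,n-m}$ contains $\binom{n-m}{w}(q-1)^w = \Theta\bigl((n/\ln n)^w\bigr)$ vectors of each fixed weight $w$. For a fixed $w > w_0$ and each sparse row $v$, you have $\Pr[v\cdot x = 0] = \tfrac1q + \tfrac{q-1}{q}(1-p)^w \ge 1 - \tfrac{q-1}{q}pw$, and over $k \approx n/\ln n$ such rows the product is at least
\[
\exp\!\Bigl(-\tfrac{q-1}{q}\,p\,w\,k\Bigr) = \exp\!\Bigl(-\tfrac{q-1}{q}\,\sigma w\,\tfrac{k}{n}\ln n\Bigr) = \exp\!\bigl(-\Theta(w)\bigr),
\]
a constant bounded away from zero, while the $\ell$ dense rows contribute only the constant factor $q^{-\ell}$. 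So the weight-$(w_0+1)$ class alone contributes $\Theta\bigl((n/\ln n)^{w_0+1}\bigr)$ to your union bound — unbounded in $n$. The root cause is that $pk = \sigma\,(k/n)\ln n$ shrinks as $k$ does, whereas Wiedemann's choice keeps $(1-z)\cdot k = c_3\ln n$ fixed. You must tie the sparsity to $k$ rather than to $n$; this is not a calibration of $\sigma,\tau,\upsilon,w_0,\ell$ but a structural change.

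\textbf{The structured component $S$.} This ingredient does not exist in the paper, and the paper does not need it, because Propositions~1 and~2 on weight enumerators already give a nontrivial cumulative bound on the number of low-weight vectors in an $(n-m)$-dimensional subspace; you explicitly discard that information in favor of the trivial $\min\{\binom{n}{w}(q-1)^w,\ q^{n-m}\}$. Even setting aside realizability at $O(n\log n)$ weight, $S$ introduces two further accounting problems: it consumes $\Theta(\log n)$ of the $n-m+\ell$ rows you are allotted, and — being deterministic — it only rules out the weight-$\le w_0$ vectors of $\ker(S)$ itself, not of $\ker(A)$, so the residual union bound has to be taken over $\ker(A)\cap\ker(S)$, whose weight distribution against adversarial $A$ is not under your control with just the trivial bound.

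In short: the dual ($\ker(A)$) formulation you start from is fine and equivalent to Wiedemann's primal one, but the flat $p$ and the deterministic low-weight filter $S$ are genuine departures, and the first of these breaks the argument for matrices whose nullity is between $\upsilon$ and $\Theta(n)$. The paper's fix — scaling the entrywise zero probability as $1 - c_3\ln n/k$ and then invoking the weight enumerator bounds — is exactly what absorbs this regime.
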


The probability bounds listed above are quite arbitrary. The parameter~$\sigma$ does not depend on this probability. Formulas
for~$\ell$, $\tau$ and~$\upsilon$, depending on the field size~$q$ and an arbitrarily small failure probability~$\epsilon$, are given
in Section~\ref{sec:modified_proof}.

\begin{figure}[t!]
\begin{center}
\begin{tabular}{ccccc|ccccc}
$q$ & $\ell$ & $\sigma$ & $\tau$ & $\upsilon$ & $q$ & $\ell$ & $\sigma$ & $\tau$ & $\upsilon$ \\[2pt] \hline
$2$\rule{0pt}{12pt} & $8$ & $\frac{43}{2}$ & $17$ & $41$  & $13$ & $3$  & $\frac{120}{13}$ & $6$ & $150$ \\
$3$\rule{0pt}{12pt} & $5$ & $16$ & $11$ & $55$  & $16$--$19$ & $2$ &  $\frac{9(q-1)}{q}$  & $5$ & $194$ \\
$4$\rule{0pt}{12pt}& $4$  & $\frac{225}{16}$ & $9$ & $65$ & $23$--$29$ & $2$  & $\frac{8(q-1)}{q}$ & $4$ & $285$ \\
$5$\rule{0pt}{12pt} & $4$ & $\frac{64}{5}$ & $8$ & $75$  & $31$--$43$ & $2$  & $\frac{15(q-1)}{2q}$ & $4$ & $381$ \\ 
$7$\rule{0pt}{12pt} & $3$ & $\frac{78}{7}$ & $7$ & $96$ & $47$---$59$ & $2$ & $\frac{7(q-1)}{q}$ & $4$ & $577$ \\ 
$8$\rule{0pt}{12pt} & $3$ & $\frac{21}{2}$ & $6$ & $108$ & $61$--$71$ & $2$ & $\frac{27(q-1)}{4q}$ & $4$ & $783$ \\
$9$\rule{0pt}{12pt} & $3$ & $\frac{88}{9}$ & $6$ & $124$  & $73$--$83$ & $2$  & $\frac{33(q-1)}{5q} n$ & $4$ & $996$  \\
$11$\rule{0pt}{12pt} & $3$ & $\frac{105}{11}$ & $6$ & $136$ & $\ge 89$ & $2$ & $\frac{13(q-1)}{2q}$ & $4$ & $1213$ 
\end{tabular}
\end{center}
\caption{Bounds Established in Section~\ref{sec:modified_proof}}
\label{fig:summary_table}
\end{figure}

A second result, which is also proved in Section~\ref{sec:modified_proof}, establishes that the constant~$\sigma$, mentioned above, can be
made arbitrarily close to~$6 \left(1 - \frac{1}{q}\right)$, provided that the minimum field size~$q$ and the
constant~$\upsilon$ are both increased --- at the cost of increasing the constants~$\ell$, $\tau$ and $\upsilon$
(but not~$\sigma$) that are listed.

\begin{theorem}
\label{thm:second_constants_for_conditioner}
Let $N$ be an integer such that $N \ge 18$. Let $\F_q$ be a finite field with size $q \ge 16N+9$ Let $m$ and~$n$
be integers such that $n \ge m \ge 0$. Let $\sigma = \left(1 - \frac{1}{q}\right)\ \cdot \left (6 + \frac{3}{N}\right)$,
$\tau = 1$, and $\upsilon = \lceil (2N+1) \ln (2N+1) + \frac{167}{5} (2N+1) \rceil$.

A random procedure exists for generating $n-m$ rows with length~$n$ such that if $A$ is an $m \times n$ matrix
of rank~$m$, then an additional $m$ rows (each with length~$n$) are produced, and the expected number of
nonzero entries in this row is $\sigma n \ln n + \tau n$ if $n - m \ge \upsilon$, and at most
$\frac{q-1}{q} (n - m) n \le \frac{q-1}{q}  \upsilon n$, otherwise.

If $q \le n^2$ then the matrix $B \in \matring{\F}{n}$ obtained from~$A$ by adding these rows is nonsingular
with probability at least $\frac{8}{9}$. If $q > n^2$ then this matrix is nonsingular with probability at least
\mbox{$\frac{8}{9} - \frac{8}{9n}$}.
\end{theorem}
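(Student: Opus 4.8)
The plan is to obtain Theorem~\ref{thm:second_constants_for_conditioner} as a specialization of the general construction and analysis carried out in Section~\ref{sec:modified_proof}, rather than by an independent argument. That section will describe a randomized procedure that first produces $m$ rows from the given matrix~$A$ (a randomized copy of its row space) and then appends $n-m$ sparse rows in $\Theta(\log n)$ stages, each stage driving down the codimension~$d$ of the row space of the matrix built so far by a fixed factor $b > 1$ using a group of roughly $(1 - 1/b)d$ sparse rows whose expected weight is about $\tfrac{q-1}{q}\cdot c\,n/d$, until $d$ falls below a cutoff, after which the appended rows become essentially dense; the number of stages, the per-stage failure probability, the cutoff $\upsilon$, and the leading weight constant $\sigma = \tfrac{c(b-1)}{\ln b}\cdot\tfrac{q-1}{q}$ are all expressed there as functions of~$q$, an admissible failure probability~$\epsilon$, and an internal granularity parameter fixing~$b$. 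For Theorem~\ref{thm:second_constants_for_conditioner} I would instantiate that result with $\ell = 0$, $\epsilon = \tfrac 19$, and the granularity parameter equal to~$N$, so that $b \to 1$ and $c \to 6$ as $N \to \infty$ and $\sigma$ specializes to $\left(1 - \tfrac 1q\right)\left(6 + \tfrac 3N\right)$; the values $\tau = 1$ and $\upsilon = \lceil (2N+1)\ln(2N+1) + \tfrac{167}{5}(2N+1)\rceil$ should then emerge after simplification.

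The first thing to check is that the hypothesis $q \ge 16N + 9$ is exactly what makes Section~\ref{sec:modified_proof}'s per-stage estimate usable at granularity~$N$: with $b$ close to~$1$ a stage involves only about $d/(2N+1)$ rows, yet must increase the rank by its full size, and the probability that so few sparse rows fail to produce their intended rank jump deteriorates as~$q$ shrinks relative to~$N$, so $q \ge 16N + 9$ is the linear threshold below which the union bound over the $\Theta((2N+1)\ln(n-m))$ stages no longer stays below~$\epsilon$. Next comes the weight accounting: each stage contributes about $\tfrac{q-1}{q}\cdot c\,n\,(b-1)/b$ irrespective of which stage it is, there are at most $\log_b(n-m) \le \ln n / \ln b$ stages, so the total is at most $\tfrac{c(b-1)}{\ln b}\cdot\tfrac{q-1}{q}\,n\ln n$ plus $O(n)$ lower-order terms coming from the final dense rows, the ceilings in the per-row weights, and the roundings of the stage sizes; choosing $c$ and~$b$ so that $\tfrac{c(b-1)}{\ln b} = 6 + \tfrac 3N$ and absorbing the lower-order terms into $\tau n = n$ gives the claimed bound whenever $n - m \ge \upsilon$, while the alternative bound for $n - m < \upsilon$ is just the trivial estimate of at most $\tfrac{q-1}{q}$ nonzero entries per position over $(n-m)n$ positions.

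The probability claim then follows, for $q \le n^2$, by a union bound: the per-stage failure probabilities together with an $O(1/q)$ contribution from the final dense steps sum to at most $\epsilon = \tfrac 19$. For $q > n^2$ the cutoff~$\upsilon$ cannot be kept independent of~$q$ in the part of the analysis that controls accidental linear dependence among the sparse rows, so instead I would split the event: with probability at least $1 - \epsilon$ the random \emph{pattern} of nonzero positions already forces $B$ to have rank~$n$ for generic entries, and conditioned on such a pattern the random nonzero \emph{values} make $B$ nonsingular with probability at least $1 - (n-m)/q \ge 1 - 1/n$ by a Schwartz--Zippel estimate over the at most $n-m \le n$ steps; the product is $(1-\epsilon)(1 - 1/n) = \tfrac 89 - \tfrac 8{9n}$.

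I expect the main obstacle to be pushing~$\tau$ all the way down to~$1$. With the leading term pinned at $\left(6 + \tfrac 3N\right)\left(1 - \tfrac 1q\right)n\ln n$, the only room left for the ceilings, the stage-size roundings, the final dense block, and the gap between $\log_b(n-m)$ and $\ln n / \ln b$ is a single~$n$ plus the slack $\tfrac 3N\left(1 - \tfrac 1q\right)n\ln n$ between $\sigma n\ln n$ and $6\left(1 - \tfrac 1q\right)n\ln n$, and that slack only helps once~$n$ is not too small. Determining the exact cutoff --- verifying that $\lceil (2N+1)\ln(2N+1) + \tfrac{167}{5}(2N+1)\rceil$ is the least value for which the weight inequality holds for every admissible $N \ge 18$ and every $n \ge \upsilon$, which is where the constant $\tfrac{167}{5}$ comes from --- is the delicate calculation; everything else is bookkeeping already in place in Section~\ref{sec:modified_proof}.
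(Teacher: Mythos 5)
Your proposal describes a different random construction than the one the paper actually analyzes, and the mismatch is not cosmetic. The paper follows Wiedemann: after the $m$ rows of $A$, it appends $k$ sparse rows \emph{all drawn with the same sparsity} --- each entry is zero with probability $z = 1 - \frac{c_3 \ln n}{k}$ and is otherwise uniform over $\F_q$ --- and then $c_2 + \ell$ fully dense rows. There is no multi-stage schedule, no codimension $d$ that shrinks by a factor $b$, and no per-stage density $\approx c\,n/d$. The constant $\sigma = c_3\left(1-\frac 1q\right)$ arises directly from the total expected weight $\frac{q-1}{q} c_3 n\ln n$ of those $k$ rows together with $c_3 = 3 c_4$ and $c_4 = 2 + \frac 1N$; your formula $\sigma = \frac{c(b-1)}{\ln b}\cdot\frac{q-1}{q}$ with $b\to 1$ is a heuristic for a different algorithm, not what is proved.

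Consequently your explanation of the hypothesis $q \ge 16N+9$ also does not match the proof. In the paper this threshold has nothing to do with ``the union bound over $\Theta((2N+1)\ln(n-m))$ stages''; it is a technical condition used in Lemmas~\ref{lem:first_asymptotic_interval}--\ref{lem:third_asymptotic_interval} to establish that $g(\beta)\le 1$ on $0<\beta\le\frac{2N}{2N+1}$, which in turn gives $\rho_0\le\sum_{1\le j<k\beta_0}\beta^{k\beta}$ with $c_4 = 2+\frac 1N$ --- the single inequality on which the reliability estimate rests. Your sketch contains no analogue of the weight-enumerator bound (Wiedemann's Propositions~1 and~2), the split $\rho\le\rho_0+\rho_1$, the bound $\rho_1\le 2q^{-c_2}$, or the tail-splitting of $\sum\beta^{k\beta}$ into $\zeta+\theta$ in Subsection~\ref{ssec:splitting_sum}; those steps are where the claimed constants $\tau$, $\upsilon$ and the probability $\frac 89$ actually come from, and you explicitly defer them (``the delicate calculation''). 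The $q>n^2$ paragraph is closest in spirit: the paper likewise moves to a prime power $\widehat q\le n^2$, argues nonsingularity there with probability at least $\frac 89$, and then invokes Schwartz--Zippel for the actual entries, giving $(1-\frac 19)(1-\frac 1n)$. But as written, the proposal is an outline of a different procedure rather than a derivation of the stated theorem, so the central content --- the sparsity choice, the $\rho_0$ bound for $c_4$ near $2$, and the arithmetic fixing $c_2$, $\ell$, $\tau$, $\upsilon$ from $\epsilon=\frac 19$ --- is all missing.
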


Once again, the probability bounds here are quite arbitrary, and probability bounds that are closer to one can
be obtained by applications of the same techniques, at the cost of increasing the values of the constants~$\tau$
and~$\upsilon$.

This is work in progress. Future versions of this report will document progress in establishing that this
yields an efficient matrix preconditioner, to bound the number of nontrivial nilpotent blocks of a conditioned
matrix without lowering matrix rank, for matrices over small finite fields.
 
%  Wiedemann applied the version of the conditioner described in his paper --- which does not require the
%  inclusion of a positive number~$\ell$ of additional rows --- to obtain algorithms to solve systems of linear
%  equations with singular or non-square coefficient matrices.

\section{A Modified Proof of Wiedemann's Result}
\label{sec:modified_proof}

This section describes modifications to Wiedemann's argument needed to establish
Theorems~\ref{thm:constants_for_conditioner} and~\ref{thm:second_constants_for_conditioner}.
% along with related results needed to establish the claims in Section~\ref{sec:conditioning_result}.

\subsection{Getting Started --- and Improving Reliability by Adding Rows}
\label{ssec:getting_started}

Suppose that $m$ and~$n$ are positive integers such that
 $0 \le m \le n$ and $A \in \matgrp{\F}{m}{n}$ is a matrix with maximal rank~$m$, where $\F = \F_q$
is a finite field with size~$q$. Following Wiedemann's argument, let us begin by assuming that $q \le n^2$ and suppose,
as well, that
\begin{equation}
\label{eq:definition_of_k_and_c2}
n = m + k + c_2
\end{equation}
where $c_2$ will be defined later.

Suppose first that $k \le c_3 \ln n$, where $c_3$ is another constant to be chosen later,  and that the remaining
$n-m+\ell$ rows of an $(n + \ell) \times n$ matrix~$B$ are chosen uniformly and independently from~$\matgrp{\F}{1}{n}$.
The following lemma, which is easily proved, bounds the probability that the rank of~$B$ is less than~$n$ in this case.

\begin{lemma}
\label{lem:dense_selection_of_vectors}
Let $\ell$ be a nonnegative integer and let $B \in \matgrp{\F}{(n + \ell)}{n}$ be a matrix produced by appending another
$n-m+\ell$ rows, selected uniformly and independently from~$\matgrp{\F}{1}{n}$, to~$A$. Then the probability
that the rank of~$B$ is less than~$n$ is at most $q^{-\ell}$.

Furthermore, if $q \ge 3$ then the top $n \times n$ submatrix of~$B$ is nonsingular with probability at least
$\frac{1}{q-1}$.
\end{lemma}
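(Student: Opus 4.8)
The plan is to bound the probability of rank deficiency by a union bound over the "last chance" for each new row to fail to increase the rank. Append the $n-m+\ell$ random rows one at a time, and for $i = 1, \dots, n-m+\ell$ let $V_i$ be the row space of $A$ together with the first $i-1$ appended rows. Since $A$ has rank $m$, we have $\dim V_0 = m$ and each appended row can raise the dimension by at most one, so after all rows are appended the rank is $n$ unless at least one "critical" step fails. More precisely, define the step $i$ to be critical if $\dim V_{i-1} = n - (n - m + \ell - i + 1) = m + i - 1 - \ell$... hmm, let me instead argue directly: rank $< n$ at the end iff at least $\ell + 1$ of the $n - m + \ell$ appended rows lie in the span of $A$ together with the previously appended rows. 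Condition on the first $i-1$ rows; the subspace $V_{i-1}$ they span (together with $A$'s rows) has dimension at most $n$, in fact at most $n-1$ unless rank has already reached $n$, so a uniformly random row lies in $V_{i-1}$ with probability $|V_{i-1}|/q^n \le q^{n-1}/q^n = 1/q$ whenever $\dim V_{i-1} \le n-1$. For rank $< n$ we need failures at $\ell+1$ or more distinct steps; bounding the probability of $\ell + 1$ specific failures by $q^{-(\ell+1)}$ and... actually the cleanest route is: the final rank is $n - d$ for some $d \ge 0$, and $d \ge 1$ forces the number of rows falling into the running span to be at least $\ell + 1$ among $n - m + \ell$ trials, each succeeding (i.e.\ escaping the span) with probability $\ge 1 - 1/q$ conditionally; so the failure probability is at most $\binom{n-m+\ell}{?} \cdots$. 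To get the clean bound $q^{-\ell}$ I would instead track the dimension deficit directly and observe that once the running span has dimension $n-1$ a new row escapes with probability exactly $1 - 1/q \ge$ the required amount; a slick counting argument (e.g.\ bounding $\Pr[\dim V_{n-m+\ell} \le n-1]$ by summing $q^{j-n}$ over the possible intermediate dimensions and telescoping) should yield $q^{-\ell}$ exactly.

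For the second assertion, restrict attention to the top $n \times n$ submatrix, i.e.\ $A$ together with exactly the first $n - m$ appended rows (ignore the last $\ell$). Here $\dim V_0 = m$ and we need all $n - m$ steps to strictly increase the dimension, so $V_i$ has dimension exactly $m + i$ for all $i$. Conditioned on the first $i-1$ rows having done so (dimension $m + i - 1 \le n - 1$), the $i$-th row escapes $V_{i-1}$ with probability $1 - q^{m+i-1-n}$. Hence the top submatrix is nonsingular with probability $\prod_{i=1}^{n-m} (1 - q^{-(n - m - i + 1)}) = \prod_{j=1}^{n-m} (1 - q^{-j})$, the classical formula. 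For $q \ge 3$ this infinite-product lower bound $\prod_{j \ge 1}(1 - q^{-j})$ needs to be shown to be at least $\frac{1}{q-1}$; I would verify this by a straightforward estimate, e.g.\ $\prod_{j\ge 1}(1 - q^{-j}) \ge 1 - \sum_{j \ge 1} q^{-j} = 1 - \frac{1}{q-1} = \frac{q-2}{q-1}$, which is already $\ge \frac{1}{q-1}$ for $q \ge 3$; or, if a sharper constant is wanted, group terms in pairs.

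The main obstacle is getting the \emph{exact} constant $q^{-\ell}$ rather than something like $(q-1) q^{-\ell-1}$ or a binomial-coefficient-laden bound: a naive union bound over which $\ell+1$ rows fail gives $\binom{n-m+\ell}{\ell+1} q^{-(\ell+1)}$, which is far too weak. The fix is to not union-bound over subsets but to recognize that the event "rank $< n$" is exactly "fewer than $n - m$ of the first (in some order) rows increased the rank among all $n - m + \ell$", and to bound this via the coupon-collector-style telescoping sum $\sum$ of $q^{-(\text{remaining deficit})}$; equivalently, one shows $\Pr[\text{rank} < n] \le \sum_{j=0}^{\infty} (\text{number of ways to have deficit}) \cdot q^{-(\ell + 1 + j)}$ collapses to $q^{-\ell}$. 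I expect this bookkeeping — choosing the right quantity to induct on so the geometric series sums cleanly — to be the only delicate point; everything else is the standard computation of the probability that random vectors span a space.
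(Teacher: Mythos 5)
Your plan has a genuine gap where you flag it: you work row by row, tracking the span $V_i$ and hoping that a telescoping or coupon-collector argument will collapse to $q^{-\ell}$, but you never actually produce that argument. The difficulty you correctly diagnose — that a single ``stay'' step does not by itself imply rank deficiency, so a plain union bound over steps gives a binomial coefficient — is real, and your sketch stops just short of resolving it. (It can be resolved: for the deficit $D_i = n - \dim V_i$ one can show $E[q^{D_{i+1}}-1 \mid D_i] = q^{-1}(q^{D_i}-1)$, hence $E[q^{D_{n-m+\ell}}-1] < q^{-\ell}$, and Markov then yields $\Pr[D \ge 1] \le q^{-\ell}/(q-1)$ — in fact a stronger bound. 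But this is a different mechanism from what you gesture at, and you did not supply it.)

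The paper sidesteps the difficulty entirely by working \emph{column} by column rather than row by row. After permuting columns so that the principal $m \times m$ submatrix of $A$ is nonsingular, the leftmost $m$ columns of the new rows may be fixed arbitrarily: the resulting $(n+\ell)\times m$ block already has rank $m$. The remaining entries are the random ones, and one examines columns $m+1,\dots,n$ of the $(n+\ell)\times n$ matrix $B$ in order. Given that columns $1,\dots,m+i-1$ are independent, column $m+i$ (with its top $m$ entries fixed by $A$ and its bottom $n-m+\ell$ entries uniform) lies in their span with probability at most $q^{m+i-n-\ell-1}$. Here each single failure event \emph{does} imply rank deficiency, so an ordinary union bound applies, and the $n-m$ terms form a geometric series $\sum_{i=1}^{n-m} q^{m+i-n-\ell-1} < q^{-(\ell+1)}/(1-q^{-1}) \le q^{-\ell}$. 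That is the idea your plan is missing. For the second assertion your product bound $\prod_{j\ge 1}(1-q^{-j}) \ge 1 - \sum_{j\ge 1} q^{-j} = \frac{q-2}{q-1} \ge \frac{1}{q-1}$ for $q \ge 3$ is fine and matches what the paper obtains by setting $\ell = 0$ in the same union bound.
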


\begin{proof}
Permuting the columns of~$A$ (and~$B$) as needed we may assume without loss of generality that the principal
$m \times m$ submatrix of~$A$ is nonsingular. Let us continue by choosing the entries in the leftmost $m$~columns
of the $n-m+\ell$ rows that are to be appended to~$A$. Regardless of the choice of these entries, this completes an
$(n+\ell) \times m$ submatrix of~$B$, including the leftmost $m$ columns, that must also have maximal rank~$m$.

The remaining entries of the top~$m$ rows of~$B$ are, of course, fixed: They are entries of~$A$. The entries in
the lower columns may now be chosen freely and (viewing the selection of these entries in column order, instead
of row order) a standard argument establishes that, for $1 \le i \le n-m$, if the leftmost $m+i-1$ columns of~$B$ are
linearly independent then the probability that the $m+i^{\text{th}}$ column of~$B$ is a linear combination of
these columns is at most $q^{m+i-n-\ell-1}$. It now follows that $B$ has rank less than~$n$ with probability
at most
\[
 \sum_{i=1}^{n-m} q^{m+i-n-\ell-1} < \sum_{j \ge 0} q^{-(\ell+1)-j} = \frac{q^{-(\ell+1)}}{1-q^{-1}} \le q^{-\ell}.
\]

Finally, if $q \ge 3$ then, setting $\ell = 0$, one can see the (top) $n \times n$ submatrix obtained by appending these
rows is singular with probability at most $\frac{q^{-1}}{1 - q^{-1}} = \frac{1}{q-1}$, as claimed.
\end{proof}

Since $n- m = k + c_2$, the expected number of nonzero entries in these rows is
\[
 \left( \frac{q-1}{q} \right) (k + c_2 + \ell)n < \left( \frac{q-1}{q} \right) \cdot \left( c_3 n \ln n + (c_2 + \ell) n \right)
\]
when $k \le c_3 \ln n$.

With that noted let us suppose, instead, that $k > c_3 \ln n$. Following Wiedemann once again, let
\begin{equation}
\label{eq:definition_of_z}
z = 1 - \frac{c_3 \ln n}{k} > 0.
\end{equation}

Suppose that, for the initial $k$~rows, each entry is set to zero with probability~$z$. The remaining unset entries
are then chosen uniformly and independently from~$\F$. The expected number of nonzero entries in these rows
is then
\[
 \left( \frac{q-1}{q} \right) c_3 n \ln n.
\]
If the entries of another $c_2 + \ell$ rows are chosen uniformly and independently from~$\F$ then the expected
number of nonzero entries in these rows is
\[
 \left( \frac{q-1}{q} \right) (c_2 + \ell) n.
\]
Consequently the expected number of nonzero entries in all these rows is less than
\[
 \left( \frac{q-1}{q} \right) \cdot \left( c_3 n \ln n + (c_2 + \ell)n \right)
\]
in this case as well.

The bulk of the rest of Wiedemann's argument concerns the derivation of an upper bound
for the probability that $B$ has rank less than~$n$ if $k > c_3 \ln n$ and the rows of~$B$ are chosen in this way.

Following Wiedemann, let $\rho$ be the probability that the rows of~$A$, together with the first $k$ (sparse)
rows generated using the above process, are linearly dependent --- that is, the probability that the space spanned
by these vectors has dimension less than $m+k$. Wiedemann shows that
\begin{equation}
\label{eq:bound_for_rho}
\rho \le \rho_0 + \rho_1
\end{equation}
where
\begin{equation}
\label{eq:definition_of_rho0}
\rho_0= \sum_{1 \le j \le k\beta_0} \binom{k}{j} (q-1)^j \left( q^{-1} + \frac{(q-1)}{q} (nq)^{-c_4 \beta} \right)^{n-m},
\end{equation}
and
\begin{equation}
\label{eq:definition_of_rho_1}
\rho_1 = \sum_{k \beta_0 < j \le k} \binom{k}{j} (q-1)^j \left( q^{-1} + \frac{(q-1)}{q} (nq)^{-c_4 \beta} \right)^{n-m},
\end{equation}
when
\begin{equation}
\label{eq:definition_of_c4}
c_4 = \frac{c_3}{3},
\end{equation}
and when
\begin{equation}
\label{eq:definition_of_beta}
\beta = \frac{j}{k}
\end{equation}
in the above expressions, and where $\beta_0$ is yet another constant to be defined later.

A useful bound for $\rho_1$ is next obtained: Assuming that
\begin{equation}
\label{eq:constraints_on_c2_and_c4}
c_2 \ge \frac{3}{2} \ge \log_2 e \qquad \text{and} \qquad c_4 \ge \frac{2}{\beta_0},
\end{equation}
Wiedemann establishes that
\begin{equation}
\label{eq:bound_for_rho1}
\rho_1 \le 2 q^{-c_2}.
\end{equation}

Wiedemann continues by observing that
\begin{equation}
\label{eq:first_bound_for_rho0}
\rho_0 \le \sum_{1 \le j < k \beta_0} \left( \frac{1}{\beta^{\beta} (1-\beta)^{1-\beta}} f(q) \right)^k,
\end{equation}
where
\begin{equation}
\label{eq:definition_of_f}
 f(x) = (x-1)^{\beta} \left( x^{-1} + (1-x^{-1}) (nx)^{-c_4 \beta} \right).
\end{equation}

\subsection{Getting to the Next Step by a Different Route}
\label{ssec:next_step}
 
Wiedemann continues by using the above to establish that
\begin{equation}
\label{eq:big_goal}
\rho_0 \le \sum_{1 \le j < k \beta_0} \beta^{k \beta}.
\end{equation}
Unfortunately, Wiedemann's  involves a Taylor series approximation that seems only to be accurate for a limited
range of values, and might suggest that either $\beta$ must be tiny or $c_4$ must be huge in order for it to be applicable.
The argument that follows is, therefore, quite different from given by Wiedemann~\cite{wied86}.

With that noted, consider the equation at lines~\eqref{eq:first_bound_for_rho0} and~\eqref{eq:definition_of_f}
once again. These imply that
\begin{equation}
\label{eq:what_is_known_1}
\begin{split}
 \rho_0 &\le \sum_{1 \le j < k \beta_0} \left(\frac{(q-1)^{\beta}}{\beta^{\beta} (1-\beta)^{1-\beta}}
   \left( \frac{1}{q} + \frac{q-1}{q} (nq)^{-c_4 \beta} \right) \right)^k \\
  &\le \sum_{1 \le j < k \beta_0}  \left(\frac{(q-1)^{\beta}}{\beta^{\beta} (1-\beta)^{1-\beta}}
   \left( \frac{1}{q} + \frac{q-1}{q} \left( \frac{\beta}{q} \right)^{c_4 \beta}  \right)\right)^k
\end{split}
\end{equation}
since $n^{-1} \le k^{-1} \le \frac{j}{k} = \beta$. It follows from this that
\begin{equation}
\label{eq:what_is_known_2}
 \rho_0 < \sum_{1 \le j < k \beta_0}  \left(\frac{1}{\beta^{\beta} (1-\beta)^{1-\beta}}
   \left( \frac{(q-1)^{\beta}}{q} + \frac{(q-1) q^{(1-c_4)\beta}}{q} \beta^{c_4 \beta} \right) \right)^k.
\end{equation}

\subsubsection{Bounding Terms in $\boldsymbol{\rho_0}$ When $\boldsymbol{\beta}$ is Extremely Small}
\label{ssec:beta_is_tiny}

Note that $\displaystyle{\lim_{\beta \rightarrow 0^{+}}} \beta^{\beta} = \displaystyle{\lim_{\beta
\rightarrow 0^{+}}} (1-\beta)^{1-\beta} = 1$, and $\beta^{\beta} < 1$ when $0 < \beta < 1$. The bounds
given at lines~\eqref{eq:what_is_known_1} and~\eqref{eq:what_is_known_2} can be simplified by
establishing a lemma like the following, allowing the factor $(1-\beta)^{-(1-\beta)}$ to be replaced
by a factor $\beta^{\gamma \beta}$, for a negative constant~$\gamma$, when $\beta$ is small.

\begin{lemma}
\label{lem:getting_rid_of_pesky_term}
Consider the relationship between $(1-x)^{-(1-x)}$ and $x^{\gamma x}$, for a negative constant~$\gamma$,
when $x$ is small and positive.
\begin{enumerate}
\renewcommand{\labelenumi}{\text{\textup{(\alph{enumi})}}}
\setlength{\itemsep}{0pt}
\item If $0 < x \le \frac{5}{43}$ then $(1-x)^{-(1-x)} \le x^{\gamma x}$, with $\gamma = -\frac{11}{25}$.
\item If $0 < x \le \frac{1}{5}$ then $(1-x)^{-(1-x)} \le x^{\gamma x}$, with $\gamma = -\frac{23}{40}$.
\end{enumerate}
\end{lemma}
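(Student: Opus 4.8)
The plan is to take logarithms and reduce both parts to a single concavity argument. For $0 < x < 1$ both quantities $(1-x)^{-(1-x)}$ and $x^{\gamma x}$ (with $\gamma < 0$) exceed~$1$, so the inequality $(1-x)^{-(1-x)} \le x^{\gamma x}$ is equivalent, after taking natural logarithms, to $g(x) := \gamma x \ln x + (1-x)\ln(1-x) \ge 0$. Thus it suffices to prove, for $(\gamma, x_0) = (-\tfrac{11}{25}, \tfrac{5}{43})$ in part~(a) and $(\gamma, x_0) = (-\tfrac{23}{40}, \tfrac{1}{5})$ in part~(b), that $g(x) \ge 0$ for all $x \in (0, x_0]$.

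Next I would record the elementary behaviour of~$g$. Since $\lim_{x \to 0^{+}} x \ln x = 0$, the function $g$ extends continuously to $[0, x_0]$ with $g(0) = 0$. Differentiating twice gives $g''(x) = \tfrac{\gamma}{x} + \tfrac{1}{1-x} = \tfrac{\gamma + (1-\gamma) x}{x(1-x)}$; as $x(1-x) > 0$ on $(0,1)$, this is negative precisely when $x < \tfrac{-\gamma}{1-\gamma}$. For the two parameter pairs above, this threshold equals $\tfrac{11}{36}$ and $\tfrac{23}{63}$ respectively, and in each case $x_0$ lies (comfortably) below it --- $\tfrac{5}{43} < \tfrac{11}{36}$ and $\tfrac{1}{5} < \tfrac{23}{63}$ --- so $g'' < 0$ throughout $(0, x_0]$ and $g$ is concave on $[0, x_0]$.

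Then I would finish using concavity. A concave function on $[0, x_0]$ lies on or above the chord joining $(0, g(0)) = (0,0)$ and $(x_0, g(x_0))$; that chord is the segment $t \mapsto \tfrac{t}{x_0}\, g(x_0)$, which is nonnegative for $t \in [0, x_0]$ as soon as $g(x_0) \ge 0$. Hence everything reduces to the single inequality $g(x_0) \ge 0$ in each of the two cases: $g(\tfrac{5}{43}) = \tfrac{11}{215}\ln\tfrac{43}{5} + \tfrac{38}{43}\ln\tfrac{38}{43}$ and $g(\tfrac{1}{5}) = \tfrac{23}{200}\ln 5 + \tfrac{4}{5} \ln\tfrac{4}{5}$, both of which are positive.

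The main obstacle --- indeed essentially the only delicate point --- is that the endpoint inequality $g(x_0) \ge 0$ is extremely tight in part~(a): the value $g(\tfrac{5}{43})$ is positive but only by roughly $10^{-3}$. So the verification must be carried out with rigorous rational under- and over-estimates for the logarithms involved (for instance via a truncated series for $\ln(1-\tfrac{5}{43})$ together with a sharp enough bound on $\ln\tfrac{43}{5}$), rather than with floating-point approximations. It is worth remarking that the peculiar constant $\tfrac{5}{43}$ in part~(a) is essentially forced: it is close to the largest $x_0$ for which $g(x_0) \ge 0$ holds with $\gamma = -\tfrac{11}{25}$, and enlarging the admissible range of~$x$ (as in part~(b)) necessitates the more negative exponent constant $\gamma = -\tfrac{23}{40}$.
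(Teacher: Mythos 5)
Your proof is correct and genuinely simpler than the one in the paper. Both begin identically, reducing the inequality to $g(x) = \gamma x \ln x + (1-x)\ln(1-x) \ge 0$ and observing $g(0^+) = 0$; the difference is in how one deduces $g \ge 0$ on $(0, x_0]$ from a single endpoint check. The paper computes $g'''(x) = -\frac{\gamma}{x^2} + \frac{1}{(1-x)^2} > 0$, hence $g'$ is convex on $(0,1)$, and then runs through a three-case analysis of the possible sign patterns of $g'$ to show that if $g(\delta) \ge 0$ \emph{and} $g'(\delta) < 0$, then $g \ge 0$ on $(0,\delta]$; it must therefore verify two numerical inequalities per part. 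You instead observe that $g''(x) = \frac{\gamma + (1-\gamma)x}{x(1-x)} < 0$ precisely on $(0, \tfrac{-\gamma}{1-\gamma})$, verify that both prescribed endpoints $\tfrac{5}{43}$ and $\tfrac{1}{5}$ sit below their respective thresholds $\tfrac{11}{36}$ and $\tfrac{23}{63}$, so that $g$ is concave on the entire interval $[0,x_0]$, and then invoke the chord estimate: a concave function vanishing at $0$ and nonnegative at $x_0$ is nonnegative on $[0,x_0]$. This dispenses with the third derivative, the case analysis, and the auxiliary check $g'(\delta) < 0$, leaving only the single verification $g(x_0) \ge 0$ per part. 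The trade-off is that your argument requires $x_0$ to lie in the region where $g$ is concave, which happens to hold here (and, one can check, also for all the parameter pairs in the paper's extended Lemma~B.1), whereas the paper's machinery does not impose that restriction and could in principle reach past the inflection point of $g$. Your closing remark about rigour in verifying the tight endpoint inequality $g(\tfrac{5}{43}) > 0$ is well taken, though the paper faces exactly the same burden and handles it by asserting numerical bounds.
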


\begin{proof} Consider the function 
\[
 g(x) = \gamma x \ln x + (1-x) \ln (1-x)
\]
when $\gamma$ is a negative
constant and $0 < x < 1$. Since $e^{g(x)} = \frac{x^{\gamma x}}{(1-x)^{-(1-x)}}$ and $(1-x)^{-(1-x)} > 0$,
$(1-x)^{-(1-x)} \le x^{\gamma x}$ (for $0 < x < 1$) if and only if $g(x) \ge 0$.

Now
\begin{align*}
\lim_{x \rightarrow 0^{+}} g(x)
 &= \lim_{x \rightarrow 0^{+}} \frac{\gamma \ln x}{x^{-1}} + \lim_{x \rightarrow 0^{+}} (1-x) \ln (1-x) \\
 &= \lim_{x \rightarrow 0^{+}} \frac{\gamma \ln x}{x^{-1}} \\
 &= \lim_{x \rightarrow 0^{+}} \frac{\gamma x^{-1}}{-x^{-2}} \tag{by l'H\^{o}pital's Rule} \\
 &= \lim_{x \rightarrow 0^{+}} -\gamma x = 0.
 \end{align*}
It is easily checked that $g'(x) = \gamma \ln x - \ln (1-x) + \gamma - 1$. Thus $\displaystyle{\lim_{x \rightarrow{0^{+}}}} g'(x)
= +\infty$, so that $g'(x) > 0$ when $x$ is small and positive. It follows from the above that $g(x) > 0$ when
$x$ is small and positive, as well.

Note next that $g''(x)= \frac{\gamma}{x} + \frac{1}{1-x}$, so that $\displaystyle{\lim_{x \rightarrow 0^{+}}} g''(x)
= -\infty$, since $\gamma < 0$, and $g''(x) < 0$ when $x$ is small and positive.
On the other hand, $g'''(x) = -\frac{\gamma}{x^2} + \frac{1}{(1-x)^2} > 0$ when $0 < x < 1$.

It now follows that $g(x)  \ge 0$ for $0 < x \le \delta$ if $g(\delta) \ge 0$ and $g'(\delta) < 0$: Since
$g'''(x) > 0$ for all $x$ such that $0 < x < 1$, this implies that either
\begin{enumerate}
\renewcommand{\labelenumi}{\text{\textup{\roman{enumi}.}}}
\setlength{\itemsep}{0pt}
\item $g(x) \ge 0$ for all $x$ such that $0 < x < 1$,
\item there exists a value $\Gamma$ such that $0 < \Gamma < 1$, $g'(x) \ge 0$ when
     $0 < x \le \Gamma$ and $g'(x) < 0$ when $\Gamma < x < 1$, or
\item there exist values~$\Gamma_1$ and~$\Gamma_2$ such that $0 < \Gamma_1 < \Gamma_2 < 1$,
     $g'(x) \ge 0$ when $0 < x \le \Gamma_1$,
     $g'(x) \le 0$ when $\Gamma_1 \le x \le \Gamma_2$, and $g'(x) > 0$ when $\Gamma_2 < x < 1$.
\end{enumerate}
The claim is certainly trivial in the first case. In the second case it necessarily follows that
$\Gamma < \delta$, while it follows that $\Gamma_1 < \delta < \Gamma_2$ in the third case. In each of the last two
cases, the function~$g$ is either nondecreasing or has a local maximum in the interval $0 < x \le \delta$. In either
case, it is minimized at one or the other of this interval's endpoints. Since $\displaystyle{\lim_{x \rightarrow 0^{+}}}
g(x) = 0$, it therefore suffices to confirm that $g(\delta) \ge 0$ in order to establish that $g(x) \ge 0$ when
$0 < x \le \delta$.

Part~(a) of the claim can now be established by choosing $\gamma = -\frac{11}{25}$ and $\delta = \frac{5}{43}$;
then $g'(\delta) = -\frac{11}{25} \ln \frac{5}{43} - \ln \frac{38}{43} - \frac{36}{25} < -0.3 < 0$ and
$g(\delta) = -\frac{11}{215} \ln \frac{5}{43} + \frac{38}{43} \ln \frac{38}{43} > 0.0008 > 0$, as required.

Part~(b) of the claim can be established by choosing $\gamma = -\frac{23}{40}$ and $\delta = \frac{1}{5}$; then
$g'(\delta) = \frac{23}{40} \ln 5 - \ln \frac{4}{5} - \frac{63}{40} < -0.4 < 0$ and
$g(\delta) = \frac{23}{200} \ln 5 + \frac{4}{5} \ln \frac{4}{5} > 0.006 > 0$.
\end{proof}

Suppose, now, that $\gamma \in \Q$ is a negative constant such that $(1 - \beta)^{-(1-\beta)} \le
\beta^{\gamma \beta}$
when $0 < \beta \le \Delta$ for a positive constant~$\Delta$; it follows by the above lemma that one might
choose $\gamma = -\frac{11}{25}$ if $\Delta = \frac{5}{43}$, and that one might choose $\gamma = -\frac{23}{40}$
if $\Delta = \frac{1}{5}$. It would follow from the inequality at
line~\eqref{eq:what_is_known_2} that
\begin{equation}
\label{eq:what_is_known_3}
 \rho_0 \le \sum_{1 \le j < k \beta_0} \left(\beta^{(\gamma-1)\beta} \left( \frac{(q-1)^{\beta}}{q}
   + \frac{(q-1)q^{(1-c_4)\beta}}{q} \beta^{c_4 \beta} \right)\right)^k
\end{equation}

This can be further simplified by bounding $(q-1)^{\beta}$ by $\beta^{\delta \beta}$ for a small
positive constant~$\delta$:

\begin{lemma}
\label{lem:getting_rid_of_power_of_q}
Consider the relationship between $\zeta^x$ and $x^{\delta x}$ when $\zeta$ is a positive constant.

If $\zeta = 2$ and  $0 < x \le \frac{1}{5}$ then $\zeta^x \le x^{\delta x}$ when $\delta = -\frac{9}{20}$.
\end{lemma}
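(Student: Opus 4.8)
The plan is to take logarithms and reduce the claim to a single elementary numerical inequality. Since $x > 0$ and the natural logarithm is strictly increasing, $\zeta^x \le x^{\delta x}$ holds if and only if $x \ln \zeta \le \delta x \ln x$, and --- dividing through by the positive number~$x$ --- this is equivalent to $\ln \zeta \le \delta \ln x$. With $\zeta = 2$ and $\delta = -\frac{9}{20}$ the inequality to be established is therefore $\ln 2 \le -\frac{9}{20}\ln x$.

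Next I would use that $0 < x \le \frac{1}{5} < 1$, so that $\ln x \le \ln\frac{1}{5} = -\ln 5 < 0$ and hence $-\frac{9}{20}\ln x \ge \frac{9}{20}\ln 5$. It thus suffices to check that $\ln 2 \le \frac{9}{20}\ln 5$, that is (clearing denominators) that $20\ln 2 \le 9\ln 5$, that is, that $2^{20} \le 5^{9}$. This reduces to the finite check $1\,048\,576 \le 1\,953\,125$, which holds, completing the proof.

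There is no real obstacle here. The only thing requiring attention is the bookkeeping of signs: $\delta$ is negative and $\ln x$ is negative on $(0,1)$, so the product $\delta \ln x$ is positive and every manipulation above preserves the direction of the inequality. It is perhaps worth remarking that the hypothesis $x \le \frac{1}{5}$ is close to sharp, since the two sides of $\zeta^x = x^{\delta x}$ coincide at $x = 2^{-20/9} \approx 0.214$, leaving only a small positive margin; this is also why one cannot expect the same constant~$\delta$ to work for substantially larger~$x$ or for larger values of~$\zeta$.
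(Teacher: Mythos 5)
Your argument is correct, and it is cleaner than the one in the paper. You observe that for $x>0$ the target inequality $\zeta^x \le x^{\delta x}$ is equivalent (after taking logarithms and cancelling the common positive factor $x$) to $\ln\zeta \le \delta\ln x$, an inequality in which the left side is constant and the right side $-\frac{9}{20}\ln x$ is a decreasing function of~$x$; hence it suffices to check the endpoint $x=\frac{1}{5}$, which reduces to $2^{20}\le 5^9$, a finite integer comparison. The paper instead works with $h(x)=\delta x\ln x - x\ln\zeta$ and establishes the result indirectly: it computes $\lim_{x\to 0^+}h(x)=0$ via l'H\^{o}pital, shows $h'(x)\to+\infty$ as $x\to 0^+$, notes $h''(x)=\delta/x<0$ so that $h$ is concave, and concludes that $h$ can change sign at most once on $(0,1)$, so checking $h(\rho)\ge 0$ at $\rho=\frac{1}{5}$ suffices. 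The two approaches reach the same endpoint check, but yours avoids the calculus entirely by recognizing that the common factor of $x$ can be divided out first, which collapses the concavity argument to trivial monotonicity of $\ln x$. The paper's heavier machinery is what it reuses (in structural form) for the companion Lemma~\ref{lem:getting_rid_of_pesky_term}, where an analogous cancellation is not available; for the present lemma, though, your route is strictly simpler and equally rigorous. Your closing remark that the crossover is at $x=2^{-20/9}\approx 0.214$ is also accurate and correctly identifies why $\frac{1}{5}$ is close to the largest admissible bound for this choice of~$\delta$.
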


\begin{proof}
Consider the function
\[
 h(x) = \delta x \ln x - x \ln \zeta
\]
when $\delta$ is a negative constant and $\zeta$ is a positive one.
Since $e^{h(x)} = \frac{x^{\delta x}}{\zeta^x}$ and $\zeta^x > 0$ when $x > 0$, $\zeta^x \le x^{\delta x}$
(for positive~$x$) if and only if $h(x) \ge 0$.

Now note that\\[-33pt]

\begin{align*}
\lim_{y \rightarrow 0^{+}} h(y)
  &= \lim_{y \rightarrow 0^{+}} \frac{\delta \ln y}{y^{-1}} - 0\\
  &= \lim_{y \rightarrow 0^{+}} \frac{\delta y^{-1}}{-y^{-2}} \tag{by l'H\^{o}pital's Rule} \\
  &= \lim_{y \rightarrow 0^{+}} -\delta y = 0.
 \end{align*}

It is easily checked that $h'(x) = \delta \ln x + \delta - \ln \zeta$. Now $\displaystyle{\lim_{y \rightarrow 0^{+}}} h'(y)= +\infty$,
since $\delta < 0$,
so that $h'(x) > 0$ for sufficiently small positive~$x$. It follows by the above that $h(x) > 0$ when $x$ is small and positive
as well.

Since $h''(x) = \frac{\delta}{x} < 0$ when $x  > 0$ it now follows, for any positive constant~$\rho$, that if
$h(\rho) \ge 0$ then $h(x) \ge 0$ as well when $0 < x \le \rho$ --- for it follows from that this that either
$h(x) \ge 0$ for all positive~$x$ --- in which case the claim is trivial ---
or there exists some constant $\Gamma$ such that $h(x) \ge 0$
when $0 < x \le \Gamma$, and such that $h(x) < 0$ when $x > \Gamma$. If $h(\frac{1}{5}) > \frac{1}{2}$,
then $\Gamma$ must be greater than~$\frac{1}{5}$ in this second case.

With that noted, the claim  can now be established by setting
$\zeta=2$,  $\delta = -\frac{9}{20}$, $\rho = \frac{1}{5}$,
and confirming that $h(\rho) = \frac{9}{100} \ln 5 - \frac{1}{5} \ln 2 > 0.006$.
\end{proof}

Suppose next that $\delta \in \Q$ is constant that is less than or equal to~$0$ such that $(q-1)^{\beta}
\le \beta^{\delta \beta}$  when $0 < x \le \Delta$; one can certainly choose $\delta = 0$ if $q = 2$,
and it follows by the above lemma that if $q=3$ then one can choose $\delta = -\frac{9}{20}$ when
$\Delta = \frac{1}{5}$. It is already necessary for the argument being developed that $c_4 \ge 1$,
so that (when $q \ge 2$) $q^{(1-c_4)\beta} \le 1$. It would therefore follow from the bound at
line~\eqref{eq:what_is_known_3} that
\begin{equation}
\label{eq:what_is_known_4}
 \rho_0 \le \sum_{1 \le j < k \beta_0} \left(\beta^{(\gamma -1)\beta} \left( \frac{\beta^{\delta \beta}}{q}
   + \frac{(q-1)}{q} \beta^{c_4 \beta}\right)\right)^k.
\end{equation}
Once again suppose that --- as in Wiedemann's original argument --- we wish to show that
\[
 \rho_0 \le \sum_{1 \le j < k \beta_0} \beta^{k\beta}.
\]
Then it follows from the above that it is sufficient to show that
\begin{equation}
\label{eq:what_is_needed_1}
 \beta^{(\gamma - 1) \beta} \left( \frac{\beta^{\delta \beta}}{q} + \frac{(q-1)}{q} \beta^{c_4 \beta} \right) 
 \le \beta^{\beta} \qquad \text{when $0 < \beta  = \frac{j}{k} \le \beta_0$,} 
\end{equation}
that is, that $f_1(\beta, q, c, \gamma, \delta) \ge 0$, when $0 < \beta \le \beta_0$ and $c = c_4$, for
\begin{equation}
\label{eq:first_general_approximation}
f_1(\beta, q, c, \gamma, \delta) = \beta^{\beta} - \beta^{\beta (\gamma-1)} \left( \frac{\beta^{\beta \delta}}{q}
  + \frac{(q-1)}{q} \beta^{\beta c} \right).
\end{equation}
Since ${\displaystyle{\lim_{\beta \rightarrow 0^{+}}}} \beta^{\beta} = 1$, 
\[
 \lim_{\beta \rightarrow 0^{+}} f_1(\beta, q, c, \gamma, \delta) = 1 - \left( {\textstyle{\frac{1}{q}}} + {\textstyle{\frac{q-1}{q}}}
 \right) = 0.
\]

\begin{lemma}
\label{lem:correctness_of_first_approximation}
Suppose that $\gamma = \frac{a}{d}$ and $\delta = \frac{b}{d}$ for non-positive integers~$a$ and~$b$ and
a positive integer~$d$. Suppose, as well, that
\begin{equation}
\label{eq:new_requirement_for_c}
c > \frac{2q}{q-1} - \frac{\delta}{q-1} - \frac{q \gamma}{q-1}.
\end{equation}
If $\Delta$ and~$\widehat{\Delta}$ are positive constants such that $0 < \Delta < \widehat{\Delta} \le \frac{1}{e}$,
$f_1(\Delta, q, c, \gamma, \delta) \ge 0$ and $f_1(\widehat{\Delta}, q, c, \gamma, \delta) < 0$, then
$f_1(\beta, q, c, \gamma, \delta) \ge 0$ for $0 < \beta \le \Delta$.
\end{lemma}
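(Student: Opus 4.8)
The plan is to substitute $s=-\beta\ln\beta$ so that $f_1$ turns into a three‑term exponential sum in $s$, and then to locate its (unique) positive zero. Since $0<\Delta<\widehat{\Delta}\le\frac1e$ and $\frac{d}{d\beta}(-\beta\ln\beta)=-\ln\beta-1>0$ for $0<\beta<\frac1e$, the map $\beta\mapsto s=-\beta\ln\beta$ is a strictly increasing bijection of $(0,\frac1e]$ onto $(0,\frac1e]$; write $s_\Delta=-\Delta\ln\Delta$ and $s_{\widehat\Delta}=-\widehat\Delta\ln\widehat\Delta$, so $0<s_\Delta<s_{\widehat\Delta}$. Using $\beta^{\beta x}=e^{-xs}$ one checks that $f_1(\beta,q,c,\gamma,\delta)=F(s)$, where $F(s)=e^{-s}-\frac1q e^{\lambda_1 s}-\frac{q-1}{q}e^{\lambda_2 s}$ with $\lambda_1=1-\gamma-\delta$ and $\lambda_2=1-\gamma-c$. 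It therefore suffices to prove that $F(s)\ge 0$ for $0<s\le s_\Delta$, given $F(0)=0$ (the limit computed just before the lemma), $F(s_\Delta)=f_1(\Delta,q,c,\gamma,\delta)\ge 0$ and $F(s_{\widehat\Delta})<0$.

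Next I would collect the sign data supplied by the hypotheses; this is where the non‑positivity of $\gamma=\frac ad$ and $\delta=\frac bd$ enters (their rationality itself plays no further role here). Since $\gamma,\delta\le 0$ we have $\lambda_1+1=2-\gamma-\delta\ge 2>0$, hence $\lambda_1>0$, so $e^{\lambda_1 s}$ dominates and $F(s)\to-\infty$ as $s\to+\infty$. A short computation gives $F'(0)=-2+\gamma+\frac{\delta}{q}+\frac{q-1}{q}c$, and the hypothesis $c>\frac{2q}{q-1}-\frac{\delta}{q-1}-\frac{q\gamma}{q-1}$ is exactly the statement $F'(0)>0$; in particular $F>0$ on a right neighbourhood of $0$. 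Rearranging $F'(0)>0$ also yields $\lambda_2+1=2-\gamma-c<\frac{\gamma+\delta-2}{q-1}<0$.

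The core step is to show that $F$ has exactly one zero $s^{*}$ in $(0,\infty)$, with $F>0$ on $(0,s^{*})$ and $F<0$ on $(s^{*},\infty)$. For this I would pass to $\widetilde{F}(s)=e^{s}F(s)=1-\frac1q e^{(\lambda_1+1)s}-\frac{q-1}{q}e^{(\lambda_2+1)s}$, which has the same zeros as $F$ and satisfies $\widetilde{F}(0)=0$, $\widetilde{F}'(0)=F'(0)>0$, and $\widetilde{F}(s)\to-\infty$. Now $\widetilde{F}'(s)=-\frac{\lambda_1+1}{q}e^{(\lambda_1+1)s}+\frac{(q-1)|\lambda_2+1|}{q}e^{(\lambda_2+1)s}$ is a difference of two strictly positive exponential terms whose exponents have opposite sign ($\lambda_1+1>0>\lambda_2+1$ by the previous paragraph), so the ratio of the two terms is strictly monotone in $s$ and $\widetilde{F}'$ has a single zero $s_0$, positive on $(-\infty,s_0)$ and negative on $(s_0,\infty)$. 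From $\widetilde{F}'(0)>0$ we get $s_0>0$; hence $\widetilde{F}$ increases on $(0,s_0]$ — and so stays positive there, as $\widetilde{F}(0)=0$ — and decreases on $[s_0,\infty)$ from the positive value $\widetilde{F}(s_0)$ down to $-\infty$, so it has exactly one further zero $s^{*}>s_0$. This gives the claimed sign pattern for $\widetilde{F}$, hence for $F$.

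Finally, from $F(s_\Delta)=f_1(\Delta,q,c,\gamma,\delta)\ge 0$ and $F<0$ on $(s^{*},\infty)$ it follows that $s_\Delta\le s^{*}$, so $(0,s_\Delta]\subseteq(0,s^{*}]$; since $F\ge 0$ throughout $(0,s^{*}]$ we conclude $F(s)\ge 0$ for $0<s\le s_\Delta$, and pulling this back through the bijection $\beta\mapsto s$ gives $f_1(\beta,q,c,\gamma,\delta)\ge 0$ for $0<\beta\le\Delta$, as required. (The assumption $f_1(\widehat\Delta,q,c,\gamma,\delta)<0$ is not actually needed for this implication — it only reconfirms $s^{*}<s_{\widehat\Delta}<\infty$ — but it is the natural certificate available in the applications that follow.) I expect the only real obstacle to be the sign bookkeeping of the middle two paragraphs: one must extract the signs of $\lambda_1+1$, $\lambda_2+1$ and $F'(0)$ from the non‑positivity of $\gamma,\delta$ together with the lower bound on $c$, since it is precisely those signs that make $\widetilde{F}'$ a monotone‑ratio difference with a single root; everything else is routine.
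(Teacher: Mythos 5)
Your argument is correct, and it takes a genuinely different route from the paper's. The paper works in the variable $y=\beta^{\beta/d}$ so that the rationality hypotheses $\gamma = a/d$, $\delta = b/d$ turn $\beta^{(1-\gamma-\delta)\beta}f_1$ into the (generalized) polynomial $p_1(y)= y^{2d-a-b}-\frac{q-1}{q}y^{cd-b}-\frac1q$, and then it invokes Descartes' rule of signs to cap the number of positive roots at two, deriving a contradiction from a hypothetical sign change inside $(0,\Delta]$ together with the given sign data at $\Delta$, $\widehat{\Delta}$, and $\beta\to 0^+$. You instead work in $s=-\beta\ln\beta$, reduce to the three-term exponential sum $\widetilde{F}(s)=1-\frac1q e^{(\lambda_1+1)s}-\frac{q-1}{q}e^{(\lambda_2+1)s}$, and observe that $\widetilde{F}$ is strictly concave (equivalently, that $\widetilde{F}'$ is a difference of exponentials of opposite sign and so has exactly one sign change from positive to negative), whence $\widetilde{F}$ is unimodal with $\widetilde{F}(0)=0$, $\widetilde{F}'(0)=F'(0)>0$, and $\widetilde{F}\to-\infty$. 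This makes the sign pattern of $f_1$ transparent — positive on $(0,s^*)$, negative on $(s^*,\infty)$ — and the conclusion follows directly from $f_1(\Delta)\ge 0$. Two things your route buys that the paper's does not: it needs only $\gamma,\delta\le 0$ rather than the full rational-with-common-denominator hypothesis (which the paper uses only to make $p_1$ a polynomial; note, incidentally, that in the paper's own applications $cd$ is frequently not an integer, so the claim $p_1\in\Q[y]$ is not literally correct and one must appeal to the real-exponent form of Descartes' rule); and it shows, as you correctly remark, that the hypothesis $f_1(\widehat{\Delta},q,c,\gamma,\delta)<0$ is in fact superfluous — only $\Delta<1/e$ is used. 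The one small simplification you could make is to compute $\widetilde{F}''(s)=-\frac{(\lambda_1+1)^2}{q}e^{(\lambda_1+1)s}-\frac{(q-1)(\lambda_2+1)^2}{q}e^{(\lambda_2+1)s}<0$ directly rather than arguing via the monotone ratio of the two terms of $\widetilde{F}'$; this establishes strict concavity and hence unimodality in one line.
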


\begin{proof}
Let $z = \beta^{\beta}$, so that $f_1(\beta, q, c, \gamma, \delta) = p_0(z)$, where
\[
 p_0(z) = z - \frac{1}{q} z^{\gamma + \delta - 1} - \frac{q-1}{q} z^{c+\gamma - 1}.
\]
It follows that (differentiating with respect to~$z$)
\[
 p_0'(z) = 1 - \frac{1}{q}(\gamma + \delta - 1) z^{\gamma + \delta - 2}
   - \frac{(q-1)}{q}(c+\gamma -1) z^{c+ \gamma - 2},
\]
so that
\begin{align*}
 \lim_{z \rightarrow 1^{-}} p_0'(z)
  &= 1 - \frac{1}{q}(\gamma + \delta - 1) - \frac{(q-1)}{q} (c + \gamma - 1) \\
  &= 2 - \frac{\delta}{q} - \gamma - \frac{(q-1)}{q} c \\
  &= \frac{q-1}{q} \left( \frac{2q}{q-1} - \frac{\delta}{q-1} - \frac{q \gamma}{q-1} - c \right) \\
  &< 0,
\end{align*}
by the inequality at line~\eqref{eq:new_requirement_for_c}. Thus $p_0$ is a decreasing function as $z$ approaches~$1$
from below and, since ${\displaystyle{\lim_{z \leftarrow 1^{-}}}} p_0(z) = 0$, it follows that $p_0(z) > 0$ when $z$
is less than and sufficiently close to~$1$. Now, since $\beta^{\beta} < 1$ when $0 < \beta < 1$ and
${\displaystyle{\lim_{\beta \rightarrow 0^{+}}}} \beta^{\beta} = 1$, this implies that $f_1(\beta) > 0$ when $\beta$
is positive, and sufficiently small, as well.

Recall that $\gamma = \frac{a}{d}$ and $\delta = \frac{b}{d}$ where $a, b, d \in \Z$, $a \le 0$, $b \le 0$,
and $d > 0$. Thus $\beta^{(1-\gamma-\delta)\beta} f_1(\beta, q, c, \gamma, \delta)  = p_1(y)$,
where $y = \beta^{\beta/d}$ and
\[
 p_1(y) = -\frac{(q-1)}{q}  y^{cd - b} + y^{2d-a-b} -\frac{1}{q} \in \Q[y],
\]
so that (for $y$ as above) $f_1(\beta, q, c, \gamma, \delta) \ge 0$ if and only if $p_1(y) \ge 0$ --- and
(by the above) $p_1(y) > 0$
if $y$ is less than and sufficiently close to~$1$. Now --- regardless of the relationship between
$c$ and~$\gamma$ --- there are at most two changes in sign of the nonzero coefficients of this polynomial, when
listed by decreasing powers of~$y$. It follows that if $0 < \Delta < \widehat{\Delta} \le \frac{1}{e}$ (so that the function
$g(\beta) = \beta^{\beta/d}$ is decreasing, and injective, over the interval $0 < \beta \le \widehat{\Delta}$), 
$f_1(\Delta) \ge 0$ and $f_1(\widehat{\Delta}) < 0$, then $f_1(\beta) \ge 0$ for $0 < \beta \le \Delta$ ---
for, otherwise, the polynomial~$p_1$ would necessarily have at least four positive roots in the interval
$\widehat{\Delta}^{\widehat{\Delta}/d} \le y \le 1$, contradicting Descarte's rule of signs.
\end{proof}

It therefore suffices to check the condition at line~\eqref{eq:new_requirement_for_c} and to
confirm that $f_1(\Delta) \ge 0$ and $f_1(\widehat{\Delta}) < 0$, for $0 < \Delta < \widehat{\Delta} \le \frac{1}{e}$,
in order to establish that $f_1(\beta) \ge 0$ when $0 < \beta \le \Delta$ --- so that
\begin{equation}
\label{eq:first_target}
 \sum_{1 \le j \le k \Delta} \left( \frac{(q-1)^{\beta}}{\beta^{\beta} (1-\beta)^{1 - \beta}}
    \left( \frac{1}{q} + \frac{q-1}{q} (nq)^{-c_4 \beta} \right) \right)^k \le \sum_{1 \le j \le k \Delta} \beta^{k\beta}.
\end{equation}

\subsubsection{Bounding Terms in $\boldsymbol{\rho_0}$ When $\boldsymbol{\beta}$ is Larger}
\label{ssec:beta_is_moderate}

The process described in Subsection~\ref{ssec:beta_is_tiny} can only be used to establish the inequality
at line~\eqref{eq:first_target}, above,  for small positive constants~$\Delta$ that are generally much smaller than the
 desired bound $\beta_0 = \frac{2}{c_4}$. However, a complementary process --- which in turn, does not
 seem to be useful to establish the above inequalities when $\beta$ is extremely close to zero --- can (at least,
 sometimes) be used to establish that these inequalities hold for larger~$\Delta$ as well.
 
 Once again, recall that
 \[
  \frac{(q-1)^{\beta}}{\beta^{\beta} (1-\beta)^{1-\beta}} \left( \frac{1}{q} + \frac{q-1}{q}  (nq)^{-c_4 \beta} \right) \le
    \frac{(q-1)^{\beta}}{\beta^{\beta} (1-\beta)^{1-\beta}} \left( \frac{1}{q} +  \frac{q-1}{q} 
       \left( \frac{\beta}{q} \right)^{c_4 \beta} \right).
 \]
 
Suppose, now, that $\eta$ is a constant such that $0 < \eta < 1$.
 
 Then it certainly follows that
 \[
\frac{(q-1)^{\beta}}{\beta^{\beta} (1-\beta)^{1-\beta}} \left( \frac{1}{q} + \frac{q-1}{q} (nq)^{-c_r \beta}  \right) 
  \le \beta^{\beta}
 \]
 when $\Gamma_L \le \beta \le \Gamma_H$, for positive constants~$\Gamma_L$ and~$\Gamma_H$, if
 \begin{equation}
 \label{eq:first_new_relationship}
   \frac{(q-1)^{\beta}}{\beta^{\beta} (1-\beta)^{1-\beta}} \cdot \frac{1}{q} \le \eta \beta^{\beta} 
 \end{equation}
 and
 \begin{equation}
 \label{eq:second_new_relationship}
 \frac{(q-1)^{\beta}}{\beta^{\beta} (1-\beta)^{1-\beta}} \cdot 
   \frac{q-1}{q} \left(\frac{\beta}{q}\right)^{c_4 \beta} \le (1 - \eta) \beta^{\beta}
\end{equation}
when $\Gamma_L \le \beta \le \Gamma_H$ as well. 

Since $q > 1$ and $0 < \beta < 1$, the inequality at line~\eqref{eq:first_new_relationship} holds if and only if
\[
 \frac{\eta \beta^{2\beta} (1-\beta)^{1-\beta} q}{(q-1)^\beta} \ge 1.
\]
Considering logarithms, one can see that this is the case for $\Gamma_L \le \beta \le \Gamma_H$ if and only
if $F_1(\beta) \ge 0$ for $\Gamma_L \le \beta \le \Gamma_H$, where
\begin{equation}
\label{eq:defiinition_of_F1}
F_1(x) = 2x\ln x + (1-x) \ln (1-x) - x \ln (q-1) + \ln q + \ln \eta.
\end{equation}
Similarly, the inequality at line~\eqref{eq:second_new_relationship} is satisfied
if and only if
\[
 \frac{(1 - \eta) \beta^{(2 - c_4)\beta} (1-\beta)^{1-\beta} q^{c_4\beta + 1}}{(q-1)^{\beta+1}} \ge 1.
\]
Considering logarithms, one can see that this is the case for $\Gamma_L \le \beta \le \Gamma_H$
if and only if $F_2(\beta) \ge 0$ for $\Gamma_L \le \beta \le \Gamma_2$, where
\begin{equation}
\label{eq:definition_of_F2}
F_2(x) = (2-c_4)x\ln x + (1-x)\ln(1-x) + (c_4x+1) \ln q - (x+1)\ln(q-1) + \ln (1-\eta).
\end{equation}

Note next that
\[
 F_1'(x) = 1 + 2 \ln x - \ln (1-x) - \ln (q-1)
\]
--- which is independent of~$\eta$ --- and
\[
F_1''(x) = \frac{2}{x} + \frac{1}{1-x} 
\]
--- which is positive if $0 < x <  1$. Consequently it   it suffices to check that $F_1'(\beta_0) < 0$ in order to establish
that $F_1'(x) < 0$ for $0 < x \le \beta_0$, so that $F_1(x)$ is decreasing over the interval $\Delta < x \le \beta_0$. If
this is the case, and $\Delta \le \Gamma_L < \Gamma_H \le \beta_0$, then it suffices to check that $F_1(\Gamma_H)
\ge 0$ in order to confirm that $F_1(\beta) \ge 0$ for $\Gamma_L \le \beta \le \Gamma_H$ as well.

Note as well that
\[
 F_2'(x) = (1-c_4) + (2-c_4)\ln x - \ln (1-x) + c_4 \ln q - \ln (q-1)
\]
--- which is also independent of~$\eta$ --- and that
\[
F_2''(x) = \frac{2- c_4}{x} + \frac{1}{1-x}
\]
--- which is negative (for $c_4 > 2$) if $0 < x < \frac{2-c_4}{1-c_4} = 1 + \frac{1}{1-c_4}$, zero if $x = 1 + \frac{1}{1-c_4}$,
and positive if $x > 1 + \frac{1}{1-c_4}$. Consequently if $\beta_0 \le 1 + \frac{1}{1-c_4}$ then it suffices to check that
$F2'(\beta_0) \ge 0$ in order to establish that $F_2$ is increasing over the interval $\Delta \le x \le \beta_0$. If $\Delta
\le 1 + \frac{1}{1 - c_4} \le \beta_0$ then it suffices to check that $F_2'\left(1 + \frac{1}{1-c_4}\right) \ge 0$ in order to confirm
that $F_2$ is increasing over this interval. If $1 + \frac{1}{1 -c_4} \le \Delta$ then it suffices to check
that $F_2'(\Delta) \ge 0$ in
order to confirm this. In any case, if this has been confirmed and $\Delta \le \Gamma_L < \Gamma_H \le \beta_0$,
then it suffices to check that $F_2(\Gamma_L) \ge 0$ in order to confirm that $F_2(\beta) \ge 0$ for $\Gamma_L
\le \beta \le \Gamma_H$.

The desired inequality can now be established, for $\Delta \le \beta \le \beta_0$, by breaking this interval into one or
more subintervals, and using the above process with various choices of~$\eta$ to confirm that $F_1$ and~$F_2$
are both non-negative over each subinterval.

\subsubsection{Establishing That $f$ is a Decreasing Function}
\label{ssec:f_is_decreasing}

Once again, consider the function
\[
 f(x) = (x-1)^{\beta} (x^{-1} + (1 - x^{-1}) (nx)^{-c_4 \beta})
\]
as defined at line~\eqref{eq:definition_of_f}.
Wiedemann establishes that if $0 < \beta \le \beta_0 < {\textstyle{\frac{1}{4}}}$, $n \ge 1$, $q \ge 2$ and
$c_4 > {\textstyle{\frac{4}{\ln 2}}}$, then $f$ is a non-increasing function, so that $f(q) \le f(2)$ for $q \ge 2$ ---
as needed to establish that results like the above hold for larger finite fields as well.

Unfortunately, this argument requires both $c_4$ and~$c_3$ to assume larger values than are either necessary
or desirable. However, Wiedemann's argument can be modified in a straightforward way to establish the
following.

\begin{lemma}
\label{lem:f_is_decreasing}
Suppose that $n \ge 1$, $q \ge 16$, $0 < \beta \le \beta_0 \le \frac{12}{13}$, and
$c_4 \ge \frac{2}{\beta_0} \ge \frac{13}{6}$.
Then $f$ is a decreasing function of~$x$ over the interval $x \ge q$.
\end{lemma}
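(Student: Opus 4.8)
The plan is to prove directly that $f'(x) < 0$ for every $x \ge q$. Write $a = c_4 \beta > 0$ and factor $f(x) = (x-1)^{\beta} R(x)$, where $R(x) = x^{-1} + (1 - x^{-1})(nx)^{-a}$. Both factors are positive for $x \ge q \ge 16$, so $f(x) > 0$ there, and $f'(x) < 0$ is equivalent to $f'(x)/f(x) = \beta/(x-1) + R'(x)/R(x) < 0$, i.e., after multiplying through by $(x-1)R(x) > 0$, to $-(x-1)R'(x) > \beta R(x)$.

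First I would compute $R'(x)$. From $\frac{d}{dx}(nx)^{-a} = -\frac{a}{x}(nx)^{-a}$ one obtains $R'(x) = x^{-2}\bigl(-1 + (nx)^{-a}(1 + a - ax)\bigr)$, hence $-(x-1)R'(x) = \frac{x-1}{x^2}\bigl(1 + (nx)^{-a}(ax - a - 1)\bigr)$, while $\beta R(x) = \frac{\beta}{x}\bigl(1 + (x-1)(nx)^{-a}\bigr)$. Clearing denominators (multiplying by $x^2/(x-1) > 0$ and collecting terms), the inequality to be proved becomes
\[
\frac{D}{x-1} + (nx)^{-a} E > 0, \qquad D := x(1-\beta) - 1,\quad E := \beta K - 1,\quad K := x(c_4 - 1) - c_4 .
\]
Two elementary facts then hold throughout the admissible range. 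Since $c_4 \ge 2/\beta_0 \ge \tfrac{13}{6}$ (so $c_4 > 1$) and $x \ge 16$, the quantity $K$ is increasing in $x$, so $K \ge 16(c_4 - 1) - c_4 = 15 c_4 - 16 \ge \tfrac{33}{2} > 0$; and since $\beta \le \beta_0 \le \tfrac{12}{13}$ and $x \ge 16$, we have $x(1-\beta) \ge \tfrac{16}{13} > 1$, so $D > 0$.

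Next I would split on the sign of $E$. If $E \ge 0$, the left side is at least $D/(x-1) > 0$ and we are done. The substantive case is $E < 0$, where $0 < -E = 1 - \beta K < 1$. Here I would first eliminate $n$ using $n \ge 1$ and $a > 0$, so that $(nx)^{-a} \le x^{-a}$; it then suffices to prove $\frac{D}{x-1} > x^{-a}(1 - \beta K)$, i.e., $\bigl(1 - \tfrac{\beta x}{x-1}\bigr) x^{a} > 1 - \beta K$. The crucial observation is that $K \ge \tfrac{33}{2} > \tfrac{16}{15} \ge \tfrac{x}{x-1}$ for $x \ge 16$, so $1 - \tfrac{\beta x}{x-1} \ge 1 - \beta K > 0$; multiplying this by $x^{a} > 1$ (as $x \ge 16 > 1$ and $a > 0$) finishes the case, and with it the lemma.

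I expect the main obstacle to be not any single calculation but choosing the right reduction: the target inequality degenerates to an equality as $\beta \to 0^{+}$ (there $D/(x-1) \to 1$ and $(nx)^{-a} E \to -1$), so crude estimates are fatal. Replacing $1 - \beta K$ by $1$, for instance, would force $c_4$ to be of the order of $\tfrac{x^2}{(x-1)\ln x}$ (about $6$ already at $x = 16$), which $\tfrac{13}{6}$ is not. One must instead carry the term $\beta K$ through the bound, after which the hard case collapses to the single clean inequality $\tfrac{x}{x-1} \le x(c_4 - 1) - c_4$, which $q \ge 16$ and $c_4 \ge \tfrac{13}{6}$ make comfortably true. (The hypothesis $q \ge 16$ is genuinely needed: for small $q$, e.g.\ $q = 3$, $f$ is increasing just above $x = q$ when $\beta$ is tiny.)
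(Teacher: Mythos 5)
Your proof is correct, and it takes a genuinely different route from the paper. The paper decomposes $x^2(x-1)^{-\beta}f'(x) = g(x) + h(x)$ with $g(x) = \beta(x - c_4x + c_4)(nx)^{-c_4\beta}$ and $h(x) = \frac{\beta x}{x-1} - 1 + (nx)^{-c_4\beta}$, and shows each term is negative separately. Negativity of $g$ is immediate once $c_4 > 1$ and $q \ge 2$; negativity of $h$ is argued by observing $h$ is decreasing in both $x$ and $c_4$, reducing to the extremal instance $H(\beta) = h(16)$ with $c_4 = \frac{13}{6}$, and then running a full calculus argument in $\beta$ (evaluating $\lim_{\beta\to 0^+}H$ and $\lim_{\beta\to 0^+}H'$, checking $H'' > 0$, and verifying $H(\frac{12}{13}) < 0$). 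You instead clear denominators once and collect everything into $\frac{D}{x-1} + (nx)^{-a}E$ with $D = x(1-\beta)-1$, $E = \beta K - 1$, $K = x(c_4 - 1) - c_4$; the case $E\ge 0$ is trivial, and the substantive case $E < 0$ closes in one line once you notice that $q \ge 16$ and $c_4 \ge \frac{13}{6}$ give $K \ge \frac{x}{x-1}$, so that $\bigl(1 - \frac{\beta x}{x-1}\bigr)x^a \ge (1-\beta K)x^a > 1 - \beta K$. This buys a shorter, more elementary argument that avoids the second-derivative analysis in $\beta$ entirely, at the price of having to spot the algebraic cancellation that $K \ge \frac{x}{x-1}$ exploits; the paper's version is more mechanical but has to work harder on $h$ because it keeps the two terms separate.

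One small caution about the commentary appended to your proof: the crude bound you warn against (replacing $1-\beta K$ by $1$, i.e., asking only for $\frac{D}{x-1} > x^{-a}$) does in fact hold under the lemma's hypotheses for all $0 < \beta \le \frac{12}{13}$ and $x \ge 16$ with $c_4 = \frac{13}{6}$, so the claimed threshold near $c_4 \approx 6$ does not materialize. Likewise, the asserted counterexample ``$q=3$, $\beta$ tiny'' does not hold --- for tiny $\beta$ both of the paper's $g$ and $h$ are still negative at $x=3$ when $c_4 \ge \frac{13}{6}$; the failure for small $q$ actually occurs with $\beta$ near $\beta_0$. Neither misstatement touches the validity of the proof itself.
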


\begin{proof}
As Wiedemann notes, if $f$ is as defined at line~\eqref{eq:definition_of_f} then
\begin{multline*}
 f'(x) = \beta (x-1)^{\beta-1} (x^{-1} + (1-x^{-1}) (nx)^{-c_4 \beta}) \\ + (x-1)^{\beta}(-x^{-2} + x^{-2}(nx)^{-c_4\beta}
   -c_4 \beta(x^{-1}-x^{-2}) (nx)^{-c_4\beta})
\end{multline*}
so that 
\[
 x^2(x-1)^{-\beta}f'(x) = g(x) + h(x),
\]
where
\[
 g(x) = (\beta x - c_4 \beta x + c_4 \beta) (nx)^{-c_4 \beta}
\]
and
\[
 h(x) = \frac{\beta x}{x-1} - 1 + (nx)^{-c_4 \beta} = \beta + \beta(x-1)^{-1} - 1 + (nx)^{-c_4 \beta}.
\]
Consequently, for $x > 1$, if $g(x) < 0$ and $h(x) < 0$ then $f'(x) < 0$ as well.

Now, since $\beta (nx)^{-c_4 \beta} > 0$ when $n$, $x$, $c_4$ and~$\beta$ are all positive, it suffices to show
that $q - c_4 q + c_4 < 0$ in order to establish that $g(q) < 0$, and $q - c_4 q + c_4 < 0$ if and only if
$q > \frac{c_4}{c_4 - 1} = 1 + \frac{1}{c_4-1}$. Since $c_4 \ge \frac{13}{6} > 2$, $1 + \frac{1}{c_4 - 1} < 2$,
so that $g(q) < 0$ when $q \ge 16$, as desired.

Consider the function~$h$ when $n$, $q$, $\beta$, $\beta_0$ and~$c_4$ are as above.
This function is certainly decreasing with both~$x$ and~$c_4$. It therefore suffices to set $x = q = 16$ and
$c_4 = \frac{13}{6}$ and show that
\[
 H(\beta) = h(16) =  {\textstyle{\frac{16}{15}}} \beta - 1 + (16n)^{-\frac{13}{6} \beta} < 0
\]
when $0 < \beta \le \frac{12}{13}$ in order to establish that $h(x) < 0$, for $\beta$ in this range,
and for $q$ and~$n$ as above. Now it is easily checked that ${\displaystyle{\lim_{\beta \rightarrow 0^{+}}}} H(\beta)
= 0 - 1 + 1 = 0$. Considered as a function of~$\beta$ (and differentiating with respect to~$\beta$),
$H'(\beta) = {\textstyle{\frac{16}{15}}} - {\textstyle{\frac{13}{6}}} \ln (16n) \cdot (16n)^{-\frac{13}{6} \beta}$, so that
${\displaystyle{\lim_{\beta \rightarrow 0^{+}}}} H'(\beta) = \frac{16}{15} - \frac{13}{6} \ln(16n) \le \frac{16}{15} -
\frac{13}{6} \ln 16 < -4$. Thus both $H'(\beta)$ and~$H(\beta)$ are negative when $\beta$ is
positive and sufficiently small.

Note next that $H''(\beta) = \frac{169}{36} \ln(16n)^2 \cdot (16n)^{-\frac{13}{6} \beta} > 0$ whenever $\beta > 0$, so
that $H'(\beta)$ is a strictly increasing function of~$\beta$.   This admits (only) two possibilities: Either
$H(\beta) < 0$ for all $\beta > 0$ --- which certainly establishes the desired result --- or there exists a
positive value $\Delta$ such that $H(\beta) < 0$ when $0 < \beta < \Delta$, $H(\Delta) = 0$, and
$H(\beta) > 0$ when $\beta > \Delta$. In either case it now suffices to check that $H(\beta) < 0$ when
$\beta$ has the maximum value of interest, that is, when $\beta = \frac{12}{13}$. It therefore remains only
to note that
\[
  H\left(\frac{12}{13}\right) = \frac{64}{65} -1 + (16n)^{-2} \le -\frac{1}{65} + \frac{1}{256} < -0.01 < 0
\]
in order to complete the proof.
\end{proof}

\subsubsection{Application of These Processes}
\label{ssec:bounding_rho0}

The processes described in Subsections~\ref{ssec:beta_is_tiny} and~\ref{ssec:beta_is_moderate} and the
result established in Section~\ref{ssec:f_is_decreasing} can now be applied to establish the following.

\begin{lemma}
\label{lem:bound_when_q=2}
If $q = 2$, $0 < \beta \le \frac{6}{43}$ and $c_4 \ge \frac{43}{3}$ then
\[
 \frac{1}{\beta^{\beta} (1-\beta)^{1-\beta}} \left( {\textstyle{\frac{1}{2}}} + {\textstyle{\frac{1}{2}}}
 (2n)^{-c_4 \beta} \right) \le \beta^{\beta}.
\]
\end{lemma}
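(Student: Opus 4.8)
The plan is to apply the two complementary processes of Subsections~\ref{ssec:beta_is_tiny} and~\ref{ssec:beta_is_moderate} with $q = 2$, breaking the interval $0 < \beta \le \frac{6}{43}$ into a ``tiny'' piece $0 < \beta \le \Delta$ and a ``moderate'' piece $\Delta \le \beta \le \frac{6}{43}$, and verifying the required inequalities on each. Since $q = 2$ and $c_4 \ge \frac{43}{3}$ we have $\beta_0 = \frac{2}{c_4} \le \frac{6}{43}$; the statement's hypothesis $\beta \le \frac{6}{43}$ is exactly $\beta \le \beta_0$ for the limiting choice $c_4 = \frac{43}{3}$, so I would carry out the whole argument with $c_4 = \frac{43}{3}$, noting monotonicity in $c_4$ where needed (larger $c_4$ only shrinks the left-hand side of the target inequality, since the exponent $-c_4\beta$ and the exponent $c_4\beta$ in $\beta^{c_4\beta}$ both push terms downward for $\beta < 1$).

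First I would handle the tiny-$\beta$ regime via Lemma~\ref{lem:correctness_of_first_approximation}. With $q = 2$ one can take $\delta = 0$ (by the remark after Lemma~\ref{lem:getting_rid_of_power_of_q}, since $(q-1)^\beta = 1$) and, using Lemma~\ref{lem:getting_rid_of_pesky_term}(a), $\gamma = -\frac{11}{25}$ valid for $0 < \beta \le \frac{5}{43}$. I would check the condition at line~\eqref{eq:new_requirement_for_c}: with $q = 2$, $\delta = 0$, $\gamma = -\frac{11}{25}$ this reads $c > 4 - 0 - 2\gamma = 4 + \frac{22}{25} = \frac{122}{25}$, comfortably satisfied by $c_4 = \frac{43}{3} \approx 14.3$. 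Then I need positive $\Delta < \widehat\Delta \le \frac{1}{e}$ with $f_1(\Delta, 2, c_4, -\tfrac{11}{25}, 0) \ge 0$ and $f_1(\widehat\Delta, \ldots) < 0$, which by the lemma gives $f_1(\beta, \ldots) \ge 0$ — equivalently the target inequality of the lemma — on all of $0 < \beta \le \Delta$. A candidate is $\Delta$ somewhere around $\frac{1}{20}$ to $\frac{1}{10}$ with $\widehat\Delta$ slightly larger but still below $\frac 1e$; the precise choice is a routine numerical check of the polynomial $p_1(y)$.

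Next, for the moderate regime $\Delta \le \beta \le \frac{6}{43}$, I would apply the process of Subsection~\ref{ssec:beta_is_moderate} with $q = 2$: here $(q-1)^\beta = 1$, so $F_1$ and $F_2$ simplify (the $-x\ln(q-1)$ and $-(x+1)\ln(q-1)$ terms vanish, $\ln q = \ln 2$). I first confirm $F_1'(\beta_0) < 0$, i.e. $1 + 2\ln\beta_0 - \ln(1-\beta_0) < 0$ at $\beta_0 = \frac{6}{43}$ (clearly true since $2\ln\frac{6}{43} \approx -3.9$), so $F_1$ is decreasing on the interval and it suffices to check $F_1(\Gamma_H) \ge 0$ at the right endpoint of each subinterval. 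For $F_2$, since $c_4 = \frac{43}{3} > 2$ and $1 + \frac{1}{1-c_4} = 1 - \frac{3}{40} = \frac{37}{40} > \frac{6}{43} = \beta_0$, we are in the case $\Delta \le 1 + \frac{1}{1-c_4}$ is false at the relevant scale — actually $1+\frac{1}{1-c_4} \approx 0.925 \ge \beta_0$, so $F_2$ is increasing on $\Delta \le x \le \beta_0$ provided $F_2'(\beta_0) \ge 0$, and then it suffices to check $F_2(\Gamma_L) \ge 0$ at the left endpoint of each subinterval. I would split $[\Delta, \frac{6}{43}]$ into two or three subintervals, choosing $\eta$ (e.g. $\eta$ near $\frac{1}{2}$, adjusted per subinterval) so that both $F_1(\Gamma_H) \ge 0$ and $F_2(\Gamma_L) \ge 0$ hold; summing the per-subinterval bounds gives the target inequality on the whole moderate range. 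Finally, combining the tiny and moderate ranges (chosen to overlap at $\Delta$) yields the claim for all $0 < \beta \le \frac{6}{43}$.

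The main obstacle I anticipate is the bookkeeping in the moderate regime: the two functions $F_1$ and $F_2$ pull in opposite directions as $\eta$ varies ($F_1$ wants $\eta$ small, $F_2$ wants $\eta$ close to $1$ — more precisely $F_2$ improves as $1-\eta$ grows but also as $\ln\eta$ in $F_1$ grows), and near $\beta \approx \frac{6}{43}$, where $c_4\beta \approx 2$, the term $\beta^{c_4\beta}$ is not especially small, so the slack is tight. Getting a single $\eta$ to work across a subinterval, or choosing the subinterval breakpoints so that a valid $\eta$ exists on each, is the delicate part; the tiny-$\beta$ Descartes-rule argument, by contrast, reduces to checking the sign of $f_1$ at just two points once line~\eqref{eq:new_requirement_for_c} is verified, and should go through cleanly.
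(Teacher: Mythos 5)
Your proposal follows the paper's proof essentially step for step: split $0 < \beta \le \frac{6}{43}$ at $\Delta = \frac{5}{43}$, apply Lemma~\ref{lem:correctness_of_first_approximation} with $\gamma = -\frac{11}{25}$, $\delta = 0$ on the tiny piece, and use the $F_1, F_2$ machinery of Subsection~\ref{ssec:beta_is_moderate} on $[\frac{5}{43}, \frac{6}{43}]$; the paper does exactly this, finding that a \emph{single} subinterval with $\eta = \frac{99}{100}$ suffices (checking $F_1(\frac{6}{43}) > 0.004$ and $F_2(\frac{5}{43}) > 0.2$) and verifying $f_1(\frac{5}{43}) > 0.04$, $f_1(\frac{7}{43}) < -0.03$. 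One small correction to your bookkeeping: your initial statement that ``$F_1$ wants $\eta$ small, $F_2$ wants $\eta$ close to $1$'' has the directions reversed --- $F_1$ carries $+\ln\eta$ and $F_2$ carries $+\ln(1-\eta)$, so $F_1$ improves as $\eta \to 1$ and $F_2$ improves as $\eta \to 0$ (your parenthetical restatement has it right); this is why the paper takes $\eta$ close to $1$, since $F_1$ at the right endpoint is the tight constraint.
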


\begin{proof}
To begin, let us use the process described in Subsection~\ref{ssec:beta_is_tiny} to establish the above inequality
when $0 < \beta \le \frac{5}{43}$. It follows by part~(a) of Lemma~\ref{lem:getting_rid_of_pesky_term} that
$(1-x)^{-(1-x)} \le x^{\gamma x}$ when $0 < x \le \frac{5}{43}$ and $\gamma = -\frac{11}{25}$, so that $\gamma$ can
be set to have this value when this process is applied. Since $(q-1)^x = 1^x = 1$ when $0 < x \le \frac{5}{43}$, 
$(q-1)^x \le x^{\delta x}$ in this range when $\delta = 0$, so this value will be used for this constant. In this case
\[
 c_4 - \left( \frac{2q}{q-1} - \frac{\delta}{q-1} - \frac{q\gamma}{q-1} \right) = \frac{89}{15} > 0,
\]
so that the condition at line~\eqref{eq:new_requirement_for_c} is satisfied. Since $0 < \frac{5}{43} < \frac{7}{43}
\le \frac{1}{e}$, it now suffices to note that $f_1(\frac{5}{43}, 2, \frac{43}{3}, -\frac{11}{25}, 0) > 0.04$ and
$f_1(\frac{7}{43}, 2, \frac{43}{3}, -\frac{11}{25}, 0) < - 0.03$ --- for it then follows by
Lemma~\ref{lem:correctness_of_first_approximation} that the inequality in the claim is satisfied when
$0 < \beta \le \frac{5}{43}$.

The process described in Subsection~\ref{ssec:beta_is_moderate} can now be used to establish the above
inequality when $\frac{5}{43} \le \beta \le \frac{6}{43}$, completing the proof. Since $F_1'\left(\frac{6}{43}\right) <
-2.7$ the function~$F_1$ is decreasing over this interval, for every choice of~$\eta$. Since $\frac{6}{43} <
\frac{37}{40} = 1 + \frac{1}{1 - (43/3)}$ and $F_2'\left(\frac{6}{43}\right) > 21$, the function $F_2$ is increasing
over this interval for every choice of~$\eta$.

It now suffices to confirm that if $\eta = \frac{99}{100}$ then $F_1\left(\frac{6}{43}\right) > 0.004$ and
$F_2\left(\frac{5}{43}\right) > 0.2$, so that $F_1$ and $F_2$ are both non-negative over the interval
$\frac{5}{43} \le \beta \le \frac{6}{43}$, as desired.
\end{proof}

It now follows that
\begin{equation}
\label{eq:fourth_bound_for_rho0_when_q=2}
\rho_0 \le \sum_{1 \le j < k \beta_0} \beta^{k\beta}
\quad \text{if $q = 2$, $\beta_0 = \frac{6}{43}$, and $c_4 \ge \frac{43}{3}$.}
\end{equation}

In the above lemma the upper limit, $\beta_0 = \frac{6}{43}$ for~$\beta$, has been chosen so that
$\beta_0 = \frac{2}{c_4}$ --- in order to match the constraint between~$\beta_0$ and~$c_4$ as shown
at line~\eqref{eq:constraints_on_c2_and_c4}. A plot of the function
$\beta^{\beta} - \frac{1}{\beta^{\beta} (1-\beta)^{1-\beta}} \left(\frac{1}{2} + \frac{1}{2} \left(\frac{\beta}{2}\right)^{\frac{43}{3}\beta}\right)$,
for $0 < \beta \le \frac{6}{43}$, is shown in Figure~\ref{fig:plot1}.
\begin{figure}[t!]
\begin{center}
\scalebox{0.4}{\includegraphics{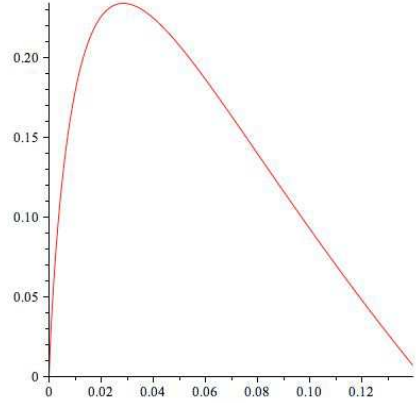}}
\end{center}
\caption{Plot of $\beta^{\beta} - \frac{1}{\beta^{\beta} (1-\beta)^{1-\beta}} \left( \frac{1}{2} - \frac{1}{2} \left(
 \frac{\beta}{2}\right)^{\frac{43}{3}\beta}\right)$
when $0 < \beta \le \frac{6}{43}$}
\label{fig:plot1}
\end{figure}
As this may suggest, the above result result can be improved slightly --- but not by very much: The inequality
at line~\eqref{eq:fourth_bound_for_rho0_when_q=2}, above, is not satisfied if $c_4$ is decreased to~$14$ and
$\beta_0$ increased to~$\frac{1}{7}$.

In order to see one more example of this process let us consider
 the case that $q \ge 3$. An application of the technique described above establishes the following.

\begin{lemma}
\label{lem:bound_when_q=3}
If $q = 3$, $0 < \beta \le \frac{1}{4}$ and $c_4 \ge 8$ then
\[
 \frac{2^{\beta}}{\beta^{\beta}(1-\beta)^{1-\beta}} \left( {\textstyle{\frac{1}{3}}} + {\textstyle{\frac{2}{3}}}
 (3n)^{-c_4\beta}\right) \le \beta^{\beta}.
\]
\end{lemma}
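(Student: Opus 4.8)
The plan is to mirror, essentially verbatim, the structure of the proof of Lemma~\ref{lem:bound_when_q=2}, since Lemma~\ref{lem:bound_when_q=3} is the $q=3$ instance of exactly the same inequality with $\beta_0=\frac14$, $c_4=8$. First I would split the interval $0<\beta\le\frac14$ into a ``tiny'' piece and a ``moderate'' piece, choosing a cut point $\Delta$ (something like $\frac15$ looks natural, since the relevant lemmas for $q=3$ are stated with threshold $\frac15$). On the tiny piece $0<\beta\le\Delta$ I would invoke the machinery of Subsection~\ref{ssec:beta_is_tiny}: part~(b) of Lemma~\ref{lem:getting_rid_of_pesky_term} gives $(1-x)^{-(1-x)}\le x^{\gamma x}$ with $\gamma=-\frac{23}{40}$ on $0<x\le\frac15$, and Lemma~\ref{lem:getting_rid_of_power_of_q} gives $2^{x}\le x^{\delta x}$ with $\delta=-\frac{9}{20}$ on the same range (note $(q-1)^\beta=2^\beta$ here, so this is exactly the case the lemma was stated for). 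Then I would check the hypothesis \eqref{eq:new_requirement_for_c} of Lemma~\ref{lem:correctness_of_first_approximation}: with $q=3$, $\gamma=-\frac{23}{40}$, $\delta=-\frac{9}{20}$ the right-hand side $\frac{2q}{q-1}-\frac{\delta}{q-1}-\frac{q\gamma}{q-1}=3+\frac{9}{40}+\frac{69}{40}$, which is well under $8$, so the condition holds. Finally I would exhibit concrete rationals $\Delta<\widehat\Delta\le\frac1e$ with $f_1(\Delta,3,8,-\tfrac{23}{40},-\tfrac9{20})\ge0$ and $f_1(\widehat\Delta,3,8,-\tfrac{23}{40},-\tfrac9{20})<0$; Lemma~\ref{lem:correctness_of_first_approximation} then delivers the inequality on the whole tiny piece.

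Second, on the moderate piece $\Delta\le\beta\le\frac14$ I would run the argument of Subsection~\ref{ssec:beta_is_moderate}: pick one or more values of $\eta\in(0,1)$, verify $F_1'(\tfrac14)<0$ (so $F_1$, with $q=3$, $\eta$ chosen, is decreasing on the subinterval) and the appropriate monotonicity statement for $F_2$ (with $c_4=8$ one has $1+\frac{1}{1-c_4}=\frac67$, which exceeds $\frac14$, so I would check $F_2'(\tfrac14)\ge0$, or more carefully $F_2'$ at whichever endpoint the sign analysis in Subsection~\ref{ssec:beta_is_moderate} dictates), and then confirm $F_1$ and $F_2$ are non-negative at the endpoints where they are minimized. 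If a single $\eta$ does not cover the full range $[\Delta,\tfrac14]$ I would subdivide further, exactly as the text permits. Splicing the two pieces gives the claimed bound on all of $0<\beta\le\frac14$.

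The main obstacle I expect is numerical rather than conceptual: finding a clean cut point $\Delta$ and a companion $\widehat\Delta\le\frac1e$ for which $f_1$ genuinely changes sign, together with $\eta$-values that make both $F_1$ and $F_2$ non-negative across the remaining interval. Because $c_4$ is only $8$ here (much smaller than the $\frac{43}{3}$ used for $q=2$) the slack in \eqref{eq:what_is_needed_1} is tighter, so the tiny-$\beta$ region where Lemma~\ref{ssec:beta_is_tiny} applies may be quite small and the moderate region may need two subintervals with carefully tuned $\eta$. I would also double-check the side conditions baked into the larger argument---in particular that $c_4=8\ge\frac{2}{\beta_0}=8$ holds with equality and that $\beta_0=\frac14<\frac12$---but these are immediate. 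No new ideas beyond those already developed in Subsections~\ref{ssec:beta_is_tiny} and~\ref{ssec:beta_is_moderate} should be required; it is a matter of supplying the right constants and a handful of evaluations.
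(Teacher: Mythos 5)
Your proposal tracks the paper's proof essentially step for step: split at $\Delta=\frac15$, invoke part~(b) of Lemma~\ref{lem:getting_rid_of_pesky_term} for $\gamma=-\frac{23}{40}$ and Lemma~\ref{lem:getting_rid_of_power_of_q} for $\delta=-\frac9{20}$, check~\eqref{eq:new_requirement_for_c} and evaluate $f_1$ at $\frac15$ and $\frac14$, then run the Subsection~\ref{ssec:beta_is_moderate} process on $[\frac15,\frac14]$ with (as you anticipate) two subintervals and two choices of $\eta$. The only flaw is a small arithmetic slip in the check of~\eqref{eq:new_requirement_for_c} --- the last term should be $\frac{69}{80}$, not $\frac{69}{40}$, giving $3+\frac{87}{80}$ for the right-hand side --- but this does not affect the conclusion that $c_4=8$ comfortably exceeds it.
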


\begin{proof}
To begin, let us use the process described in Subsection~\ref{ssec:beta_is_tiny} to establish the above inequality
when $0 < \beta \le \frac{1}{5}$. It follows by part~(b) of Lemma~\ref{lem:getting_rid_of_pesky_term} that
$(1-x)^{-(1-x)} \le x^{\gamma x}$ when $0 < x \le \frac{1}{5}$ and $\gamma = -\frac{23}{40}$, so that $\gamma$ can
be set to have this value when this process is applied. It follows by Lemma~\ref{lem:getting_rid_of_power_of_q}
that if $q=3$ and $\delta = -\frac{9}{20}$ then  $(q-1)^x \le x^{\delta x}$ when $0 < x \le \frac{1}{5}$, so $\delta$ can
be set to be $-\frac{9}{20}$ in this argument. In this case
\[
 c_4 - \left( \frac{2q}{q-1} - \frac{\delta}{q-1} - \frac{q\gamma}{q-1} \right) = \frac{313}{80} > 0,
\]
so that the condition at line~\eqref{eq:new_requirement_for_c} is satisfied. Since $0 < \frac{1}{5} < \frac{1}{4}
\le \frac{1}{e}$, it now suffices to note that $f_1(\frac{1}{5}) > 0.0008$ and
$f_1(\frac{1}{4}) < - 0.03$ --- for it then follows by
Lemma~\ref{lem:correctness_of_first_approximation} that the inequality in the claim is satisfied when
$0 < \beta \le \frac{1}{5}$.

The process described in Subsection~\ref{ssec:beta_is_moderate} can now be used to establish the above
inequality when $\frac{1}{5} \le \beta \le \frac{1}{4}$, completing the proof. Since $F_1'\left(\frac{1}{4}\right) <
-2.1$ the function~$F_1$ is decreasing over this interval, for every choice of~$\eta$. Since $\frac{1}{4} <
\frac{6}{7} = 1 + \frac{1}{1 - 8}$ and $F_2'\left(\frac{1}{4}\right) > 9$, the function $F_2$ is increasing
over this interval for every choice of~$\eta$.

It now suffices to confirm that if $\eta = \frac{39}{40}$ then $F_1\left(\frac{6}{25}\right) > 0.01$ and
$F_2\left(\frac{1}{5}\right) > 0.08$, so that $F_1$ and $F_2$ are both non-negative over the interval
$\frac{1}{5} \le \beta \le \frac{6}{25}$.

It then suffices to confirm that if $\eta = \frac{123}{125}$ then $F_1\left(\frac{1}{4}\right) > 0.0002$ and
$F_2\left(\frac{6}{25}\right) > 0.05$, so that $F_1$ and $F_2$ are both non-negative over the interval
$\frac{6}{25} \le x \le \frac{1}{4}$, as needed to complete the proof.
\end{proof}

It now follows that
\begin{equation}
\label{final_bound_for_rho0_when_q=3}
\rho_0 \le \sum_{1 \le j < k \beta_0} \beta^{k\beta} \quad \text{if $q = 3$, $\beta_0 = \frac{1}{4}$, and
  $c_4 \ge 4$.}
\end{equation}
A plot of the function $\beta^{\beta} - \frac{2^{\beta}}{\beta^{\beta} (1-\beta)^{1-\beta}} \left( \frac{1}{3} +
\frac{2}{3} \left(\frac{\beta}{3}\right)^{8 \beta}\right)$, when $0 < \beta \le \frac{1}{4}$, is shown in
Figure~\ref{fig:plot2}.
\begin{figure}[t!]
\begin{center}
\scalebox{0.4}{\includegraphics{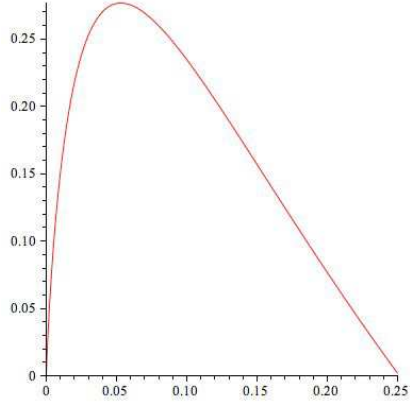}}
\end{center}
\caption{Plot of $\beta^{\beta} - \frac{1}{\beta^{\beta} (1-\beta)^{1-\beta}} \left(\frac{1}{3} + \frac{2}{3}
\left(\frac{\beta}{3}\right)^{8\beta}\right)$ when $0 < \beta \le \frac{1}{4}$}
\label{fig:plot2}
\end{figure}
Once again, this suggests that the above result cannot be improved by very much.

Appendix~\ref{sec:continuation_of_argument} include details of analyses for additional field sizes
as well --- as summarized in Figure~\ref{fig:choices_of_c4} on page~\pageref{fig:choices_of_c4}.
A Maple worksheet, that can be used to check
these details, is available online at
\begin{center}
\url{http://www.cpsc.ucalgary.ca/~eberly/Research/sparse_conditioner.mw}.
\end{center}
\begin{figure}[t!]
\begin{center}
\begin{tabular}{ccc|ccc|ccc}
$q$ & $c_4$ & $\beta_0$ & $q$ & $c_4$ & $\beta_0$  & $q$ & $c_4$ & $\beta_0$  \\[2pt] \hline
$2$ \rule{0pt}{12pt} & $\frac{43}{3}$ & $\frac{6}{43}$ & $9$ & $\frac{11}{3}$ & $\frac{6}{11}$ & $47$--$59$ & 
$\frac{7}{3}$ & $\frac{6}{7}$ \\
$3$ \rule{0pt}{12pt} & $8$& $\frac{1}{4}$ & $11$ & $\frac{7}{2}$ & $\frac{4}{7}$ & $61$--$71$ &
$\frac{9}{4}$ & $\frac{8}{9}$ \\
$4$ \rule{0pt}{12pt} & $\frac{25}{4}$ & $\frac{8}{25}$ & $13$ & $\frac{10}{3}$ & $\frac{3}{5}$ & $73$--$83$ &
$\frac{11}{5}$ & $\frac{10}{11}$ \\
$5$ \rule{0pt}{12pt} & $\frac{16}{3}$ & $\frac{3}{8}$ & $16$--$19$ & $3$ & $\frac{2}{3}$ & $\ge 89$ &
$\frac{13}{6}$ & $\frac{12}{13}$ \\
$7$ \rule{0pt}{12pt} & $\frac{13}{3}$ & $\frac{6}{13}$ & $23$--$29$ & $\frac{8}{3}$ &$\frac{3}{4}$ & \\
$8$ \rule{0pt}{12pt} & $4$ & $\frac{1}{2}$ & $31$--$43$ & $\frac{5}{2}$ & $\frac{4}{5}$ & & 
\end{tabular}
\end{center}
\caption{Choices of~$c_4$ and~$\beta_0$ for Various Field Sizes}
\label{fig:choices_of_c4}
\end{figure}

 It follows from this that
\[
 \rho_0 \le \sum_{1 \le j < k \beta_0} \beta^{k\beta}
\]
for each of the choices of~$q$, $\beta_0$ and~$c_4$ given in Figure~\ref{fig:choices_of_c4}.

\subsection{Asymptotic Results: Choice of Field Size}
\label{ssec:asymptotics_1}

The objective of this next subsection is to identify bounds on the sizes of primes allowing the inequality at line~\eqref{eq:big_goal}
to be established when $c_4$ is closer to~$2$. Suppose, in particular, that $N$ is an integer such that $N \ge 18$ and that
\begin{equation}
\label{eq:asymptotic_constraints_on_constants}
c_4 = 2 + {\textstyle{\frac{1}{N}}} \qquad \text{and} \qquad q \ge 16N + 9.
\end{equation}
Consider now the function
\begin{equation}
\label{eq:definition_of_g}
\begin{split}
  g(\beta)  &= \frac{\left( \frac{(q-1)^{\beta}}{\beta^{\beta} (1-\beta)^{1-\beta}} \right)
    \left( \frac{1}{q} + \left(\frac{q-1}{q}\right) \left( \frac{\beta}{q} \right)^{c\beta} \right)}{\beta^{\beta}} \\
   &= \frac{(q-1)^{\beta}}{\beta^{2\beta} (1-\beta)^{1-\beta}} \left( \frac{1}{q} + \left(\frac{q-1}{q}\right)  
     \left( \frac{\beta}{q} \right)^{c_4 \beta}  \right)
\end{split}
\end{equation}
noting --- by the inequality at line~\eqref{eq:what_is_known_1} --- that the inequality at line~\eqref{eq:big_goal} is satisfied if
$g(\beta) \le 1$ when $0 < \beta \le \beta_0 = \frac{2}{c_4}$.

Consider, as well the function
\begin{equation}
\label{eq:definition_of_h}
\begin{split}
h(\beta) &= \ln g(\beta) \\ &= \beta  \ln (q-1) - 2 \beta \ln \beta - (1 - \beta) \ln (1-\beta) \\
  &\hspace*{1 true in} + \ln \left( \frac{1}{q} + \left( \frac{q-1}{q} \right) \left(\frac{\beta}{q} \right)^{c_4 \beta}   \right)
\end{split}
\end{equation}
observing that
\begin{equation}
\label{eq:definition_of_hprime}
 h'(\beta) = \ln (q-1) - 2 \ln \beta + \ln (1-\beta) - 1 + \frac{H(\beta)}{H(\beta)+1} (c_4 + c_4 \ln \beta - c_4 \ln q)
\end{equation}
where
\begin{equation}
\label{eq:definition_of_H}
H(\beta) = (q-1) \left( \frac{\beta}{q}\right)^{c_4 \beta}.
\end{equation}

\begin{lemma}
\label{lem:first_asymptotic_interval}
If $0 < \beta \le \frac{1}{50 c_4 \ln q}$ then $g(\beta) \le 1$.
\end{lemma}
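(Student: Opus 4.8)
The plan is to work with $h(\beta)=\ln g(\beta)$, given by~\eqref{eq:definition_of_h}; since every factor of $g(\beta)$ is positive for $0<\beta<1$, this is legitimate, and $g(\beta)\le 1$ on $\bigl(0,\tfrac{1}{50c_4\ln q}\bigr]$ is equivalent to $h(\beta)\le 0$ there. The limits recorded just before the lemma give $\lim_{\beta\to0^{+}}h(\beta)=\ln\!\bigl(\tfrac1q+\tfrac{q-1}{q}\bigr)=0$, so it suffices to prove that $h'(\beta)<0$ for all $\beta$ with $0<\beta\le\tfrac{1}{50c_4\ln q}$: then $h$, which tends to $0$ as $\beta\to0^{+}$ and is strictly decreasing on the interval, is negative throughout it.

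The real work is to control the coefficient $\dfrac{H(\beta)}{H(\beta)+1}$ appearing in the expression~\eqref{eq:definition_of_hprime} for $h'$, where $H(\beta)=(q-1)(\beta/q)^{c_4\beta}$. I would write $H(\beta)=(q-1)\exp\!\bigl(-c_4(\beta\ln q+\beta\ln(1/\beta))\bigr)$ and bound the exponent using $\beta\ln q\le\tfrac{1}{50c_4\ln q}\cdot\ln q=\tfrac{1}{50c_4}$ together with the elementary inequality $\beta\ln(1/\beta)\le\tfrac1e$, valid for all $\beta\in(0,1)$. This gives $c_4(\beta\ln q+\beta\ln(1/\beta))\le\tfrac{1}{50}+\tfrac{c_4}{e}<\ln\tfrac52$ once $c_4\le 2+\tfrac{1}{18}$ (which holds as $N\ge 18$), whence $H(\beta)>\tfrac25(q-1)\ge\tfrac25(16N+8)>2N$. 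Since $t\mapsto t/(t+1)$ is increasing and, crucially, $c_4\cdot\tfrac{2N}{2N+1}=\tfrac{2N+1}{N}\cdot\tfrac{2N}{2N+1}=2$, the bound $H(\beta)>2N$ yields
\[
 c_4\cdot\frac{H(\beta)}{H(\beta)+1} > c_4\cdot\frac{2N}{2N+1} = 2 .
\]

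Finally I would substitute into~\eqref{eq:definition_of_hprime}. On the interval $1+\ln\beta-\ln q<0$, so, writing $\mu=c_4\cdot\frac{H(\beta)}{H(\beta)+1}>2$ for the actual value, the terms in $\ln\beta$ combine to $(\mu-2)\ln\beta<0$; using also $\ln(q-1)\le\ln q$, $\ln(1-\beta)\le 0$, $\mu\le c_4$ and $-\mu<-2$, one obtains
\[
 h'(\beta) < \ln q + (c_4 - 1) - 2\ln q = -\ln q + (c_4-1) < 0,
\]
since $c_4-1=1+\tfrac1N\le\tfrac{19}{18}$ while $\ln q\ge\ln(16N+9)\ge\ln 297>5$. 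This establishes $h'(\beta)<0$ on $\bigl(0,\tfrac{1}{50c_4\ln q}\bigr]$, and hence the lemma.

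The one delicate point is the uniform lower bound $H(\beta)>2N$ (equivalently $c_4\,H(\beta)/(H(\beta)+1)>2$): this is exactly what the constant $50$ in the definition of the interval, and the hypothesis $q\ge 16N+9$, are engineered to guarantee, and the identity $c_4\cdot\tfrac{2N}{2N+1}=2$ is what makes the threshold $2N$ the natural one. Everything after that is a routine one‑line sign estimate; in particular no monotonicity of $H$ and no restriction such as $\tfrac{1}{50c_4\ln q}<\tfrac1e$ is actually needed, since the crude bound $\beta\ln(1/\beta)\le\tfrac1e$ already leaves ample room.
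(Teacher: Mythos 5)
Your proof is correct and follows the same overall strategy as the paper: reduce $g(\beta)\le 1$ to $h'(\beta)<0$ on the interval using $\lim_{\beta\to0^{+}}h(\beta)=0$, then control the coefficient $H(\beta)/(H(\beta)+1)$ in~\eqref{eq:definition_of_hprime} by a lower bound on $H(\beta)$, and finish with a sign estimate. Where you differ is in how the lower bound on $H(\beta)$ is obtained and in the final estimate. The paper bounds $H(\beta)$ via $\beta^{c_4\beta}\ge e^{(\beta-1)c_4}\ge e^{-c_4}$ together with $q^{-c_4\beta}\ge q^{-1/(50\ln q)}=e^{-1/50}$, obtaining $H(\beta)>\frac{296}{297}e^{-37/18-1/50}\,q>\frac{q}{8}\ge 2N+1$, whereas you use the single elementary inequality $\beta\ln(1/\beta)\le 1/e$ plus $\beta\ln q\le\frac{1}{50c_4}$ to bound the exponent by $\ln\frac52$, yielding $H(\beta)>\frac{2}{5}(q-1)>2N$ more directly. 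In the final step the paper plugs $\frac{2N}{2N+1}\cdot c_4=2$ into $h'$ exactly, getting $h'(\beta)<-\ln q+\ln(1-\beta)+1$, while you keep $\mu=c_4H/(H+1)$ symbolic, use $\mu>2$ and $\mu\le c_4$ on the two $\mu$-terms with opposite signs, and reach $h'(\beta)<-\ln q+(c_4-1)$; both margins are comfortable. Your route is somewhat shorter and avoids the paper's delicate numerical check that $\frac{296}{297}e^{-37/18-1/50}>\frac18$, at the cost of introducing the $1/e$ bound, which is a standard and clean device; either is acceptable.
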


\begin{proof} It is easily checked that $\displaystyle{\lim_{\beta \rightarrow 0^+} g(\beta)} = 1$ and $\displaystyle{\lim_{\beta \rightarrow 0^+}
h(\beta)} = 0$. The claim can therefore be established by showing that $h'(\beta) < 0$ when $0 < \beta \le \frac{1}{50 c_4 \ln q}$.

Consider the above function $H(\beta)$, recalling as well that
\begin{equation}
\label{eq:log_approximation}
  \frac{x}{1+x} \le \ln (1+x) \le x,
\end{equation}
for any real number~$x$ such that $x > -1$. Replacing $x$ with $-x$ (where $x < 1$), one has that
\[
 - \frac{x}{1-x} \le \ln (1-x) \le -x
\]
as well, and replacing $x$ with $1-x$ (where, once again $x < 1$) one has that
\[
 1 - \frac{1}{x} \le \ln x \le x-1.
\]
It follows from this that (for $0 < x < 1$)
\[
 x - 1 \le x \ln x \le x^2 - x,
\]
so that
\[
 e^{x-1} \le x^x \le e^{x^2 - x} \le 1
\]
when $0 < x \le 1$. It follows from the definition of $H(\beta)$ at line~\eqref{eq:definition_of_H} that
\begin{align*}
H(\beta) &\ge (q-1) \frac{e^{(\beta-1) c_4}}{q^{c_4 \beta}} \\
 &\ge (q-1) \frac{e^{-c_4}}{q^{c_4 \beta}} \\
 &= (1 - q^{-1}) e^{-c_4} q^{1 - c_4 \beta} \\
 &\ge (1 - q^{-1}) e^{-\frac{37}{18}} q^{1- c_4 \beta} \tag{since $N \ge 18$, so that $c_4 \le \frac{37}{18}$} \\
 &\ge \frac{296}{297} \frac{e^{-\frac{37}{18}}}{q^{c_4 \beta}} q \tag{since $q = 16N + 9 \ge 297$}\\
 &\ge \frac{296}{297} \frac{e^{-\frac{37}{18}}}{q^{\frac{1}{50 \ln q}}} q & \tag{since $\beta \le \frac{1}{50 c_4 \ln q}$} \\
 &= \frac{296}{297} e^{-\frac{37}{18} - \frac{1}{50}} q\\
 &> \frac{q}{8} \\
 &\ge 2N+1 \tag*{(since $q \ge 16N + 9$).} \\
\end{align*}

It follows that
\[
\frac{H(\beta)}{H(\beta)+1} = 1 - (H(\beta)+1)^{-1} \ge 1 - (2N+1)^{-1} = \frac{2N}{2N+1}.
\]

Note, as well that $c_4 + c_4 \ln \beta - c_4 \ln q = c_4 \left( 1 + \ln \textstyle{\frac{\beta}{q}} \right) < c_4 \left( 1 + \ln \textstyle{\frac{1}{q}} \right)
 < 0$.
It now follows by the equation at line~\eqref{eq:definition_of_hprime} that
\begin{align*}
h'(\beta) &= \ln (q-1) - 2 \ln \beta + \ln (1-\beta) -1 + \frac{H(\beta)}{H(\beta) +1} (c_4 + c_4 \ln \beta - c_4 \ln q) \\
  &\le \ln (q-1) - 2 \ln \beta + \ln (1-\beta) - 1 + \textstyle{\frac{2N}{2N+1}} (c_4 + c_4 \ln \beta - c_4 \ln q) \\
  &= \ln (q-1) - 2 \ln \beta + \ln (1-\beta) - 1 + 2(1+ \ln \beta - \ln q) \tag{since $c_4 = 2 + \frac{1}{N} = \frac{2N+1}{N}$} \\
  &< - \ln q + \ln (1 - \beta) + 1 < 0,
\end{align*}
as required.
\end{proof}

A different approach is required for larger values of~$\beta$ because $h'(\beta)$ is eventually positive. Note that
$g(\beta) = g_1(\beta) + g_2(\beta)$, where
\begin{equation}
\label{eq:definition_of_g1}
 g_1(\beta) = \frac{(q-1)^{\beta}}{\beta^{2\beta} (1-\beta)^{1-\beta}} \cdot \frac{1}{q}
   = q^{-1} (q-1)^{\beta} \cdot \frac{(1-\beta)^{\beta-1}}{\beta^{2\beta}}
\end{equation}
and
\begin{equation}
\label{eq:definition_of_g2}
g_2(\beta) = \frac{(q-1)^{\beta}}{\beta^{2\beta} (1-\beta)^{1-\beta}} \cdot \left( \frac{q-1}{q} \right) \left( \frac{\beta}{q} \right)^{c_4 \beta}
  =\left( 1-\frac{1}{q}\right) \frac{\beta^{\beta/N}}{ \left( \frac{q^{c_4-1}}{1-\beta} \right)^{\beta} (1- \beta)}.
\end{equation}

\begin{lemma}
\label{lem:second_asymptotic_interval}
If $\frac{1}{50 c_4 \ln q} \le \beta \le \frac{7}{8}$ then $g(\beta) \le 1$.
\end{lemma}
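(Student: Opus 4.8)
The plan is to exploit the decomposition $g = g_1 + g_2$ already introduced at lines~\eqref{eq:definition_of_g1} and~\eqref{eq:definition_of_g2}, together with one convenient rewriting: since $\frac{q-1}{q}\left(\frac{\beta}{q}\right)^{c_4\beta} = \frac{H(\beta)}{q}$ for $H$ as at line~\eqref{eq:definition_of_H}, we have $g_2(\beta) = g_1(\beta) H(\beta)$, hence $g(\beta) = g_1(\beta)\bigl(1 + H(\beta)\bigr)$. First I would record two monotonicity facts on $(0,\tfrac78]$. On the one hand $(\ln g_1)'(\beta) = \ln\frac{(q-1)(1-\beta)}{\beta^2} - 1$, and $\frac{(q-1)(1-\beta)}{\beta^2} \ge \frac{8(q-1)}{49} > e$ because $q \ge 16N+9 \ge 297$ by~\eqref{eq:asymptotic_constraints_on_constants}, so $g_1$ is strictly increasing; a short second-derivative check confirms there is no interior maximum. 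On the other hand $(\ln H)'(\beta) = c_4\bigl(1 + \ln\tfrac{\beta}{q}\bigr) < 0$, so $H$ is strictly decreasing on $(0,1)$.

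The small-$\beta$ regime is handled by extending the derivative estimate of Lemma~\ref{lem:first_asymptotic_interval}. Re-examining $h'(\beta)$ at line~\eqref{eq:definition_of_hprime}, that proof in fact uses only $H(\beta) \ge 2N$ (not the specific hypothesis $\beta \le \tfrac{1}{50c_4\ln q}$) in order to obtain $\frac{H(\beta)}{H(\beta)+1} \ge \frac{2N}{2N+1}$, hence $\frac{H(\beta)}{H(\beta)+1}c_4 \ge 2$ since $c_4 = \frac{2N+1}{N}$, and from there $h'(\beta) \le \ln(q-1) + \ln(1-\beta) + 1 - 2\ln q < 1 - \ln q < 0$. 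So $h'(\beta) < 0$ whenever $H(\beta) \ge 2N$. Since $H$ is decreasing and $H\bigl(\tfrac{1}{50c_4\ln q}\bigr) > 2N$ (established inside the proof of Lemma~\ref{lem:first_asymptotic_interval}), either $H(\tfrac78) \ge 2N$ — in which case $g$ is decreasing on the whole interval $[\tfrac{1}{50c_4\ln q},\tfrac78]$ and the claim follows at once from Lemma~\ref{lem:first_asymptotic_interval} — or there is a unique $\widehat\beta \in (\tfrac{1}{50c_4\ln q},\tfrac78)$ with $H(\widehat\beta) = 2N$, and $g$ is decreasing on $[\tfrac{1}{50c_4\ln q},\widehat\beta]$, so $g(\beta) \le g\bigl(\tfrac{1}{50c_4\ln q}\bigr) \le 1$ there.

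For the remaining interval $[\widehat\beta,\tfrac78]$ one has $H(\beta) \le 2N$, and $g(\widehat\beta) \le 1$ forces $g_1(\widehat\beta) \le \frac{1}{2N+1}$ and $g_2(\widehat\beta) \le \frac{2N}{2N+1}$. Here I would mimic the partitioning method of Subsection~\ref{ssec:beta_is_moderate}: split $[\widehat\beta,\tfrac78]$ into finitely many subintervals $[\widehat\beta = s_0, s_1], \dots, [s_{r-1}, s_r = \tfrac78]$, and on each $[s_{i-1},s_i]$ use the monotonicity of $g_1$ (increasing) and of $H$ (decreasing) to get $g(\beta) \le g_1(s_i)\bigl(1 + H(s_{i-1})\bigr)$; it then suffices to verify the finitely many numerical inequalities $g_1(s_i)\bigl(1 + H(s_{i-1})\bigr) \le 1$. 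The subdivision should be taken finer near $\widehat\beta$, where $H$ is near $2N$ but $g_1$ is correspondingly near $\frac{1}{2N+1}$. To cope with the dependence on $N$ and $q$ I would note that $g_1(\beta)$ is decreasing in $q$ for each fixed $\beta \le \tfrac78$, so that beyond an explicit threshold on $q$ (depending on $N$) these inequalities hold with ample slack; this leaves only a bounded set of pairs $(N,q)$ for which the explicit checks are needed, with the binding case $N = 18$, $q = 297$, where $g_1(\tfrac78) \approx 0.80$.

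The main obstacle is precisely this last step: because $\lim_{\beta\to 0^+} g(\beta) = 1$, the inequality $g(\beta)\le 1$ at the left end of $[\widehat\beta,\tfrac78]$ holds with almost no margin, so the partition must be made fine enough there and the $q$-threshold beyond which the crude two-factor bound suffices must be pinned down carefully (and verified, e.g., with the accompanying Maple worksheet). By contrast, the identity $g = g_1(1+H)$, the two monotonicity facts, and the extension of the $h' < 0$ estimate from Lemma~\ref{lem:first_asymptotic_interval} are routine.
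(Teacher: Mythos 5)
Your identity $g = g_1(1+H)$ with $H$ as in~\eqref{eq:definition_of_H}, the verification that $g_1$ is strictly increasing on $(0,\tfrac78]$ for $q\ge 297$, that $H$ is strictly decreasing, and the observation that the estimate in the proof of Lemma~\ref{lem:first_asymptotic_interval} really only uses $H(\beta)\ge 2N$ to obtain $h'(\beta)<0$, are all correct and give a genuinely different organizing principle from the paper's. But the proposal stops exactly where the work is: you never exhibit the partition, never verify a single product bound $g_1(s_i)(1+H(s_{i-1}))\le 1$, and defer everything to ``explicit checks'' and a worksheet. That is a genuine gap, and it is not a routine one here, because the crossover point $\widehat\beta$ defined by $H(\widehat\beta)=2N$ moves with both $N$ and $q$: for any fixed $\beta$ with $c_4\beta<1$ one has $H(\beta)\sim q^{1-c_4\beta}\to\infty$ as $q\to\infty$, so $\widehat\beta\to 0$ with $q$, the interval $[\widehat\beta,\tfrac78]$ grows, and there is no single finite partition that works uniformly. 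Your proposed rescue --- ``$g_1$ decreases in $q$, so beyond a threshold the bounds hold with ample slack'' --- controls only one of the two factors; the other factor $1+H(s_{i-1})$ is \emph{not} monotone in $q$ and blows up for fixed small $s_{i-1}$, so the threshold-on-$q$ step does not follow as stated.

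The paper sidesteps this by working with the additive decomposition $g=g_1+g_2$ and first deriving $q$\emph{-uniform} majorants: using $k(\beta)=\frac{(1-\beta)^{\beta-1}}{\beta^{2\beta}}\le 3$ it bounds $g_1(\beta)\le \frac{\delta}{297}296^{\beta}$ (increasing in~$\beta$) on intervals where $k\le\delta$, and it bounds $g_2(\beta)\le \frac{296}{297}\left(\frac{e}{297}\right)^{\beta}$ (decreasing in~$\beta$), both of which are independent of $q\ge 297$. With those in hand, one fixed sequence of cut points $\frac{1}{75},\frac14,\frac23,\frac45,\frac78$ and explicit numerical comparisons finishes the proof once and for all. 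If you want to salvage the multiplicative route, you would need analogous $q$-uniform majorants for $g_1$ and for $1+H$ (and the latter does not admit one in the form you need on the left part of the interval), so the paper's additive bounds are really doing essential work that your outline has not replaced.
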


\begin{proof} Consider first the function
\[
 k(x) = \frac{(1-x)^{x-1}}{x^{2x}}
\]
and
\[
 \ell(x) = \ln k(x) = (x-1) \ln (1-x) - 2x \ln x,
\]
noting that
\[
 \ell'(x) = \ln (1-x) - 2 \ln x - 1
\]
and
\[
 \ell''(x) = - \frac{1}{1-x} - \frac{2}{x},
\]
so that $\ell''(x) < 0$ when $0 < x < 1$. Note that $\ell'(x) = 0$ when
\[
 \frac{1-x}{x^2} = e,
\]
that is, when $e x^2 + x - 1 = 0$. Applying the quadratic equation, one can see that the functions $k(x)$ and~$\ell(x)$ are both
increasing when $0 < x < \frac{-1 + \sqrt{1+4e}}{2e}$, and decreasing when $\frac{1 + \sqrt{1+4e}}{2e} < x < 1$. In particular,
$k(x) \le k\left(\frac{-1 + \sqrt{1 + 4e}}{2e}\right) < 3$ for $0 < x < 1$.

This be used to obtain upper bounds for the function~$g_1$, shown at line~\eqref{eq:definition_of_g1}, above, over various intervals.
Suppose, in particular, that $k(\beta) \le \delta$ when $\Delta_L < \beta \le \Delta_H$ for constants $\delta$, $\Delta_L$ and~$\Delta_H$
such that $0 < \Delta_L < \Delta_H < 1$. It follows that if $\Delta_L < \beta \le \Delta_H$ then
\[
 g_1(\beta) \le \delta q^{-1} (q-1)^{\beta} \le {\textstyle{\frac{\delta}{297}}} 296^{\beta} \quad \text{when $\beta \le \textstyle{\frac{7}{8}}$}
\]
since $q \ge 297$; this upper bound for $g_2(\beta)$ is increasing with $\beta$.

Note, as well, that, since $\frac{x}{1+x} \le \ln (1+x) \le x$ for any real number~$x$ such that $x > -1$, $-\frac{x}{1-x} \le \ln (1-x) \le -x$
for any real number~$x$ such that $0 < x < 1$. Consequently $-x \le (1-x) \ln (1-x) \le -x + x^2$, so that
\[
 e^{-x} \le (1-x)^{1-x} \le e^{-x+x^2},
\]
and (replacing $x$ with $1-x$, and applying the bounds for $(1-x)^{1-x}$)
\[
 e^{1-x} \le x^x \le e^{-x+x^2}
\]
when $0 < x < 1$. Thus, if $g_2(\beta)$ is as given at line~\eqref{eq:definition_of_g2}, above, then
\begin{align*}
\label{eq:bound_for_g2}
 g_2(\beta) &= \left( 1 - \frac{1}{q} \right) \frac{\beta^{\beta/N}}{\left( \frac{q^{c_4-1}}{1-\beta}\right)^{\beta} (1-\beta)} \\
%  &=\left( 1 - \frac{1}{q} \right) \frac{(\beta^{\beta})^{1/N}}{q^{(c_4-1)\beta} (1-\beta)^{1-\beta} } \\
  &\le \left( 1 - \frac{1}{q} \right) \frac{1}{q^{(c_4 -1)\beta} (1-\beta)^{1-\beta}} \tag{since $\beta^{\beta} \le e^{-\beta + \beta^2} \le 1$} \\
  &\le \left( 1 - \frac{1}{q} \right) \left( \frac{e}{q^{c_4-1}} \right)^{\beta} & \tag{since $(1-\beta)^{1-\beta} \ge e^{-\beta}$} \\
  &< \left( 1 - \frac{1}{q} \right) \left( \frac{e}{q} \right)^{\beta} \tag{since $c_4 > 2$} \\
  &\le {\textstyle{\frac{296}{297}}} \left( {\textstyle{\frac{e}{297}}} \right)^{\beta} \tag*{(since $q \ge 297$).}
\end{align*}
Since $q > e$, this upper bound for $g_2(\beta)$ is certainly decreasing as $\beta$ increases.

It follows by Lemma~\ref{lem:first_asymptotic_interval} that $g(\beta) = g_1(\beta) + g_2(\beta) < 1$ when $\beta = \frac{1}{50 c_4 \ln q}$.
Now
\begin{align*}
 g_2 \left( {\textstyle{\frac{1}{50 c_4 \ln q}}} \right)
  &\le \left( {\textstyle{1 - \frac{1}{q}}} \right)  \left( {\textstyle{\frac{e}{q}}} \right)^{\frac{1}{50 c_4 \ln q}} \\
  &\le \left( {\textstyle{1 - \frac{1}{q}}} \right) \left( e^{\frac{1}{50 c_4}} \right)^{\frac{1}{\ln q} - 1} \\
  &\le \left( e^{\frac{1}{50 c_4}} \right)^{\frac{1}{\ln q}-1}. \\
\end{align*}
Since $e^{\frac{1}{50 c_4}} > 1$, $q \ge 297$, and the above exponent $\frac{1}{\ln q}- 1$ decreases as $q$ increases, it now follows
that
\[
 g_2\left({\textstyle{\frac{1}{50 c_4 \ln q}}} \right) \le \left( e^{\frac{1}{50 c_4}} \right)^{\frac{1}{\ln 297} - 1}.
\]
Now, since $N \ge 18$, $c_4 \le \frac{37}{18}$ and $\frac{1}{50 c_4} \ge \frac{9}{925}$. Since the exponent in the above expression
is negative, it now follows that
\[
   g_2\left({\textstyle{\frac{1}{50 c_4 \ln q}}} \right) \le \left( e^{\frac{9}{925}} \right)^{\frac{1}{\ln 297} - 1} <  \frac{397}{400}.
\]
Since $g_2$ is decreasing with $\beta$ and $g_1$ is increasing with~$\beta$, it now suffices to choose a value~$\gamma$
such that $\frac{1}{50 c_4 \ln q} < \gamma \le \frac{7}{8}$, and $g_1(\gamma) \le \frac{3}{400}$ in order to conclude that
$g(\beta) = g_1(\beta) + g_2(\beta) \le 1$ when $\frac{1}{50 c_4 \ln q} \le \beta \le \gamma$.

Suppose now that $\gamma \le \frac{1}{8}$ as well; then $k(\gamma) \le k\left(\frac{1}{8}\right) \le 2$ so that $g_1(\beta)
\le \frac{2}{297} 296^{\beta}$. It follows from that that if $\gamma = \frac{1}{75}$ then  $\frac{1}{50 c_4 \ln q} < \frac{1}{75} < \frac{1}{8}$
and $g_1(\gamma) < \frac{3}{400}$, as required
to conclude that $g(\beta) < 1$ when $\frac{1}{50 c_4 \ln q} \le \beta \le \frac{1}{75}$.

Note next that $g_2\left(\frac{1}{75}\right) \le \frac{296}{297} \left( \frac{e}{297} \right)^{\frac{1}{75}} < \frac{19}{20}$. Since $g_2$ is
decreasing with~$\beta$ and $g_1$ is increasing with $\beta$, it suffices to choose a value~$\widehat{\gamma}$ such that
$\frac{1}{75} < \widehat{\gamma} \le \frac{7}{8}$ and $g(\widehat{\gamma}) < \frac{1}{20}$ in order to conclude that $g(\beta) \le 1$
when $\frac{1}{75} \le \beta \le \widehat{\gamma}$ as well.

As noted above, $k(\beta) < 3$ when $0 < \beta < 1$, and it follows that $g_1(\beta) \le \frac{3}{297} 296^{\beta}$ for all such~$\beta$.
It follows from this that if $\widehat{\gamma} = \frac{1}{4}$ then $g_1(\gamma) < \frac{1}{20}$, as needed to conclude that $g(\beta) \le 1$
when $\frac{1}{75} \le \beta \le \frac{1}{4}$.

Now note that $g_2(\left(\frac{1}{4}\right) \le \frac{296}{97} \left( \frac{e}{297} \right)^{\frac{1}{4}} < \frac{1}{3}$. Since $g_2$ is decreasing
with~$\beta$ and $g_1$ is increasing with~$\beta$ it suffices to choose a value $\widetilde{\gamma}$ such that $\frac{1}{4} <
\widetilde{\gamma} \le \frac{7}{8}$ and $g(1)(\widetilde{\gamma}) \le \frac{2}{3}$ in order to conclude that $g(\beta)\le1$ when
$\frac{1}{4} \le \beta \le \widetilde{\gamma}$.

Once again, $g_1(\widetilde{\gamma}) \le \frac{3}{297} 296^{\widetilde{\gamma}}$, and this suffices to set $\widetilde{\gamma} = \frac{2}{3}$
in order to ensure that $g_1(\widetilde{\gamma}) < \frac{2}{3}$, as needed.

Now $g_2(\left(\widetilde{\beta}\right) = g_2\left(\frac{2}{3}\right) < \frac{1}{20}$, so it suffices to choose $\overline{\gamma}$ such that
$\frac{2}{3} < \overline{\gamma} \le \frac{7}{8}$ and $g_1(\overline{\gamma}) \le\frac{19}{20}$ in order to ensure that $g(\beta) \le 1$ when
$\frac{2}{3} \le \beta \overline{\gamma}$. Now, $k(x) \le \frac{5}{2}$ when $\frac{2}{3} \le x \le \frac{7}{8}$, so that $g_1(x) \le \frac{5}{2} \cdot
\frac{1}{297} \cdot 296^x$ for all $x$ in this range, and this can be used to establish that one can set $\overline{\beta} = \frac{4}{5}$ in order
to ensure that the desired conditions are met.

Finally, $g_2\left(\frac{4}{5}\right) < \frac{23}{1000}$ and $k(x) < \frac{99}{50}$ when $\frac{4}{5} \le x \le \frac{7}{8}$. This can be used to
establish that $g_1(\left(\frac{7}{8}\right) < \frac{977}{1000}$, as needed to establish that $g(\beta) \le 1$ when $\frac{4}{5} \le \beta
\le \frac{7}{8}$ and complete the proof of the claim.
\end{proof}

Once again the functions~$h(\beta)$, $h'(\beta)$ and~$H(\beta)$, shown at lines~\eqref{eq:definition_of_h}--\eqref{eq:definition_of_H},
are of use to prove the desired result for larger values of~$\beta$.

\begin{lemma}
\label{lem:third_asymptotic_interval}
If $\frac{7}{8} \le \beta \le \frac{2N}{2N+1}$ then $g(\beta) \le 1$.
\end{lemma}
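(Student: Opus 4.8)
The plan is to reduce the claim to a single endpoint check. Observe first that the right endpoint $\frac{2N}{2N+1}$ equals $\beta_0 = \frac{2}{c_4}$ (using $c_4 = 2 + \frac1N$ from line~\eqref{eq:asymptotic_constraints_on_constants}), so this is the top of the range that matters, and near it $g(\beta)$ turns out to be very close to $1$ --- so the crude uniform bounds used in the proof of Lemma~\ref{lem:second_asymptotic_interval} cannot succeed here. Instead I would work with $h = \ln g$ and the expression for $h'$ at lines~\eqref{eq:definition_of_h}--\eqref{eq:definition_of_H}, show that $h'(\beta) > 0$ throughout $\frac78 \le \beta \le \frac{2N}{2N+1}$ (so that $g$ is increasing on this closed interval, by continuity), and then verify $g\bigl(\frac{2N}{2N+1}\bigr) \le 1$ directly.

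For the monotonicity step, split $h'(\beta)$ at line~\eqref{eq:definition_of_hprime} into its first four terms, $\ln(q-1) - 2\ln\beta + \ln(1-\beta) - 1$, and the remaining ``$H$-term'' $\frac{H(\beta)}{H(\beta)+1}(c_4 + c_4\ln\beta - c_4\ln q)$. On the interval one has $-2\ln\beta \ge 0$, $\ln(1-\beta) \ge \ln\bigl(1 - \frac{2N}{2N+1}\bigr) = -\ln(2N+1)$, and $\ln(q-1) \ge \ln(16N+8) = \ln 8 + \ln(2N+1)$ by the hypothesis $q \ge 16N+9$, so the first four terms sum to more than $\ln 8 - 1 > 1$. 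The $H$-term is negative, and I would bound its magnitude by noting that $0 < \frac{H(\beta)}{H(\beta)+1} < H(\beta) = (q-1)\bigl(\frac{\beta}{q}\bigr)^{c_4\beta} \le (q-1)\,q^{-c_4\beta} \le (q-1)\,q^{-7/4} < q^{-3/4}$ (here $\beta^{c_4\beta} = (\beta^\beta)^{c_4} \le 1$ and $c_4\beta \ge 2\cdot\frac78 = \frac74$), while $|c_4 + c_4\ln\beta - c_4\ln q| = c_4(\ln q - 1 - \ln\beta) < c_4\ln q \le \frac{37}{18}\ln q$ (since $N \ge 18$); because $q^{-3/4}\ln q$ is decreasing for $q > e^{4/3}$ and $q \ge 16N+9 \ge 297$, this gives $|H\text{-term}| < \frac{37}{18}\cdot 297^{-3/4}\ln 297 < \frac15$. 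Hence $h'(\beta) > (\ln 8 - 1) - \frac15 > 0$ throughout, and $g$ is increasing there.

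For the endpoint check, evaluate $g(\beta_0) = g_1(\beta_0) + g_2(\beta_0)$ from lines~\eqref{eq:definition_of_g1}--\eqref{eq:definition_of_g2} at $\beta_0 = \frac{2N}{2N+1}$, where $c_4\beta_0 = 2$, $1 - \beta_0 = \frac{1}{2N+1}$ and $(c_4-1)\beta_0 = 1 + \frac{1}{2N+1}$. Writing $(q-1)^{\beta_0} = (q-1)(q-1)^{-1/(2N+1)}$, using $(1-\beta_0)^{\beta_0-1} = (2N+1)^{1/(2N+1)}$ and $\beta_0^{2\beta_0} \ge e^{2(\beta_0-1)} = e^{-2/(2N+1)}$ (from $x^x \ge e^{x-1}$ for $0 < x \le 1$), and then invoking $q - 1 \ge 16N+8 = 8(2N+1)$ together with $e^2 < 8$, one gets
\[
 g_1(\beta_0) \le \frac{q-1}{q}\left(\frac{(2N+1)e^2}{q-1}\right)^{1/(2N+1)} \le \frac{q-1}{q}\left(\frac{e^2}{8}\right)^{1/(2N+1)} < \frac{q-1}{q}.
\]
Likewise $g_2(\beta_0) = \bigl(1 - \tfrac1q\bigr)\dfrac{\beta_0^{\beta_0/N}\,(2N+1)^{1/(2N+1)}}{q^{\,1+1/(2N+1)}} \le \dfrac1q\left(\dfrac{2N+1}{q}\right)^{1/(2N+1)} < \dfrac1q$, using $\beta_0^{\beta_0/N} < 1$ and $q \ge 16N+9 > 2N+1$. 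Adding, $g(\beta_0) < \frac{q-1}{q} + \frac1q = 1$; combined with the monotonicity of $g$ established above, this finishes the proof.

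I expect the only real obstacle to be the tightness of the estimate near $\beta_0$: the inequality $h'(\beta) > 0$ depends on $\ln 8 > 1$ (equivalently on the factor $8$ in $q - 1 \ge 8(2N+1)$), and the inequality $g(\beta_0) < 1$ depends on $e^2 < 8$, so both hinge on the constant $16$ in the hypothesis $q \ge 16N+9$; with a smaller leading constant the argument would fail exactly here. All the remaining manipulations are routine once these two numerical facts are in place.
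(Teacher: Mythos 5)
Your proof is correct and follows essentially the same strategy as the paper's: show $h'(\beta) > 0$ on the interval $\frac{7}{8} \le \beta \le \frac{2N}{2N+1}$ so that $g$ is increasing there, and then bound $g_1(\beta_0)$ and $g_2(\beta_0)$ separately at the right endpoint $\beta_0 = \frac{2N}{2N+1}$ to get $g(\beta_0) \le 1$. The estimates are organized a bit differently --- you bound the magnitude of the ``$H$-term'' by $\tfrac{1}{5}$ using the monotonicity of $q^{-3/4}\ln q$, whereas the paper regroups the terms of $h'$ and shows $h'(\beta) \ge \tfrac{9}{10}$; and your endpoint bounds $g_1(\beta_0) < \tfrac{q-1}{q}$, $g_2(\beta_0) < \tfrac{1}{q}$ are parametrized by $q$ rather than by $N$ as in the paper --- but these are cosmetic differences, and your closing observation that the constants $\ln 8 > 1$ and $e^2 < 8$ are exactly what the factor $16$ in $q \ge 16N+9$ is there to provide is an accurate reading of why the argument works.
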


\begin{proof}
Consider the functions~$h(\beta)$, $h'(\beta)$ and~$H(\beta)$.
Note first that if $\frac{7}{8} \le \beta \le \frac{2N}{2N+1}$ then
\begin{align*}
H(\beta)&= (q-1) \left({\textstyle{\frac{\beta}{q}}}\right)^{c_4 \beta} \\
 &\le (q-1) \left({\textstyle{\frac{1}{q}}}\right)^{c_4 \beta}\tag{since $\beta < 1$, $q >0$,and $c_4 \beta > 1$} \\
 &\le q^{1 - c_4 \beta} \\
 &\le q^{-\frac{3}{4}} \tag*{(since $c_4 \ge 2$ and $\beta \ge \frac{7}{8}$, so that $1 - c_4 \beta \le - \frac{3}{4}$).} 
\end{align*}
Since $H(\beta) \ge 0$ as well, it follows that
\[
\frac{H(\beta)}{H(\beta)+1} \le H(\beta) \le q^{-\frac{3}{4}}
\]
as well. Since $c_4 = 2 + \frac{1}{N} \le 2 + \frac{1}{18} = \frac{37}{18}$, 
\[
 c_4 \frac{H(\beta)}{H(\beta)+1} \le \frac{37}{18} \cdot q^{-\frac{3}{4}}.
\]

Since $1 + \ln \beta - \ln q < 0$ it now follows that
\begin{align*}
h'(\beta) &= \ln (q-1) - 2 \ln \beta + \ln (1-\beta) - 1 + \frac{H(\beta)}{H(\beta)+1} (c_4 + c_4 \ln \beta - c_4 \ln q) \\
  &\ge \ln (q-1) - 2 \ln \beta + \ln (1-\beta) - 1 + \frac{37}{18} \cdot q^{-\frac{3}{4}} (1 + \ln \beta + \ln q) \\
  &\ge \ln (16N+8) - 2 \ln \beta + \ln \left(\frac{1}{2N+1}\right) - 1 + \frac{37}{18} \cdot q^{-\frac{3}{4}}
    (1 + \ln \beta + \ln q) \tag{since $q-1 \ge 16N+8$ and $1 - \beta \ge \frac{1}{2N+1}$} \\
  &= \ln 8 - 2 \ln \beta - 1 + \frac{37}{18} \cdot q^{-\frac{3}{4}}(1 + \ln \beta + \ln q) \\
 &= (\ln 8 - 1 + {\textstyle{\frac{37}{18}}} \cdot q^{-\frac{3}{4}} (1 + \ln q))
   - (2 - {\textstyle{\frac{37}{18}}} \cdot q^{-\frac{3}{4}}) \ln \beta \\
  &\ge {\textstyle{\frac{9}{10}}}  - (2 - {\textstyle{\frac{37}{18}}} \cdot q^{-\frac{3}{4}}) \ln \beta
   \tag{since $\ln 8 - 1 + {\textstyle{\frac{37}{18}}} \cdot q^{-\frac{3}{4}} (1 + \ln q)
     \ge \ln 8 - 1 + {\textstyle{\frac{37}{18}}} \cdot 267^{-\frac{3}{4}} (1 + \ln 267) \le {\textstyle{\frac{9}{10}}}$} \\
  &\ge {\textstyle{\frac{9}{10}}} \tag{since $\ln \beta < 0$ and $2 - {\textstyle{\frac{37}{18}}} \cdot q^{-\frac{3}{4}}
    \ge 2 - {\textstyle{\frac{37}{18}}} \cdot 267^{-\frac{3}{4}} \ge {\textstyle{\frac{19}{10}}} > 0$} \\
  &> 0.
\end{align*}
Thus the function $h(\beta)$ is increasing over the interval $\frac{7}{8} \le \beta \le \frac{2N}{2N+1}$. Since
$h(\beta) = \ln g(\beta)$, the function $g(\beta)$ is increasing as well --- and it suffices to confirm that
$g\left(\frac{2N}{2N+1}\right) \le 1$ in order to establish the claim.

Now recall that $g\left(\frac{2N}{2N+1}\right) = g_1\left(\frac{2N}{2N+1}\right) + g_2\left(\frac{2N}{2N+1}\right)$,
for the functions~$g_1(\beta)$ and~$g_2(\beta)$ as defined at lines~\eqref{eq:definition_of_g1}
and~\eqref{eq:definition_of_g2} respectively. Applying these definitions one can see that
\begin{align*}
 g_1\left({\textstyle{\frac{2N}{2N+1}}}\right)
  &= q^{-1} (q-1)^{\frac{2N}{2N+1}} \frac{\left({\textstyle{\frac{1}{2N+1}}}\right)^{-\frac{1}{2N+1}}}
  {\left({\textstyle{\frac{2N}{2N+1}}}\right)^{\frac{4N}{2N+1}}} \\
  &= \left( 1 - {\textstyle{\frac{1}{q}}} \right)
   \frac{(q-1)^{-\frac{1}{2N+1}} \cdot \left( {\textstyle{\frac{1}{2N+1}}} \right)^{-\frac{1}{2N+1}}}
   {\left( {\textstyle{\frac{2N}{2N+1}}} \right)^{\frac{4N}{2N+1}}} \\
  &\le \frac{16N+8}{16N+9} \cdot
   \frac{\left((16N+8) \cdot \left( {\textstyle{\frac{1}{2N+1}}}\right)\right)^{-\frac{1}{2N+1}}}
    {\left( {\textstyle{\frac{2N}{2N+1}}} \right)^{\frac{4N}{2N+1}}} \tag{since $q \ge 16N+9$} \\
  &= \frac{16N+8}{16N+9} \cdot
   \frac{8^{-\frac{1}{2N+1}}}
    {\left( {\textstyle{\frac{2N}{2N+1}}} \right)^{\frac{4N}{2N+1}}} \\
  &\le \frac{16N+8}{16N+9} \cdot
   \frac{8^{-\frac{1}{2N+1}}}
    {e^{-\frac{2}{2N+1}}} \tag{since ${\textstyle{\left( \frac{2N}{2N+1} \right)}}^{\frac{4N}{2N+1}} \ge
      e^{-\frac{2}{2N+1}}$} \\
   &= \frac{16N+8}{16N+9} \cdot
    \left( \frac{e^2}{8} \right)^{\frac{1}{2N+1}} \\
   &\le \frac{16N+8}{16N+9} \tag{since $\textstyle{\frac{e^2}{8}} < 1$}
\end{align*}
and
\begin{align*}
 g_2\left({\textstyle{\frac{2N}{2N+1}}}\right)
  &= \left(1 - {\textstyle{\frac{1}{q}}}\right)
   \frac{\left( {\textstyle{\frac{2N}{2N+1}}} \right)^{\frac{2}{2N+1}}  }
    { \left(  \frac{q^{\frac{N+1}{N}}}{\frac{1}{2N+1}}     \right)^{\frac{2N}{2N+1}}  \left( {\textstyle{\frac{1}{2N+1}}} \right)   } \\
   &= \left(1 - {\textstyle{\frac{1}{q}}}\right)
   \frac{\left( {\textstyle{\frac{2N}{2N+1}}} \right)^{\frac{2}{2N+1}}  }
   {  \left( q^{\frac{N+1}{N}} \right)^{\frac{2N}{2N+1}} \left( {\textstyle{\frac{1}{2N+1}}} \right)^{\frac{1}{2N+1}}    } \\
    &= \left(1 - {\textstyle{\frac{1}{q}}}\right)
     \frac{\left( {\textstyle{\frac{(2N)^2}{2N+1}}} \right)^{\frac{1}{2N+1}} q^{-\frac{1}{2N+1}}   }
     {q} \\
    &\le  \left(1 - {\textstyle{\frac{1}{q}}}\right)
    \frac{\left( {\textstyle{\frac{(2N)^2}{(2N+1)^2}}} \right)^{\frac{1}{2N+1}} }{q} \tag{since $q \ge 16N+9 \ge 2N+1$} \\
    &\le \left(1 - {\textstyle{\frac{1}{q}}}\right) \cdot {\textstyle{\frac{1}{q}}} \\
    &\le \frac{1}{q} \\
    &\le \frac{1}{16N+9} \tag{since $q \ge 16N+9$}
 \end{align*}
as needed to establish that $g\left(\frac{2N}{2N+1}\right) \le 1$ and complete the proof.
\end{proof}

The following is now a straightforward consequence of
Lemmas~\ref{lem:first_asymptotic_interval}--\ref{lem:third_asymptotic_interval}.

\begin{corollary}
\label{cor:asymptotic_primes}
If $N \ge 18$, $c_4$ and~$q$ are as shown at line~\eqref{eq:asymptotic_constraints_on_constants},
and $\beta_0 = \frac{2}{c_4}  = \frac{2N}{2N+1}$, the the inequality at line~\eqref{eq:big_goal} is satisfied.
\end{corollary}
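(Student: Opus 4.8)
The plan is simply to stitch together the three interval lemmas that immediately precede the statement. Recall that, as already recorded just after the definition of $g$ at line~\eqref{eq:definition_of_g} (and as follows from the bound at line~\eqref{eq:what_is_known_1}, since the bracketed expression there equals $g(\beta)\,\beta^{\beta}$), the inequality at line~\eqref{eq:big_goal} holds as soon as $g(\beta) \le 1$ for every $\beta$ with $0 < \beta \le \beta_0 = \frac{2}{c_4} = \frac{2N}{2N+1}$. So the entire task is to certify $g(\beta) \le 1$ on the interval $(0,\beta_0]$.

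To do that I would observe that Lemmas~\ref{lem:first_asymptotic_interval}, \ref{lem:second_asymptotic_interval} and~\ref{lem:third_asymptotic_interval} cover the three ranges $\bigl(0,\frac{1}{50 c_4 \ln q}\bigr]$, $\bigl[\frac{1}{50 c_4 \ln q},\frac{7}{8}\bigr]$ and $\bigl[\frac{7}{8},\frac{2N}{2N+1}\bigr]$ respectively, and that under the hypotheses $N \ge 18$, $c_4 = 2 + \frac{1}{N}$ and $q \ge 16N+9$ of line~\eqref{eq:asymptotic_constraints_on_constants} these three ranges overlap at their shared endpoints and together exhaust $(0,\beta_0]$: indeed $q \ge 297$ gives $50 c_4 \ln q > 500$, so $\frac{1}{50 c_4 \ln q} < \frac{1}{500} < \frac{7}{8}$, while $\frac{2N}{2N+1} = 1 - \frac{1}{2N+1} \ge \frac{36}{37} > \frac{7}{8}$. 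Hence for any $\beta \in (0,\beta_0]$ at least one of the three lemmas applies and yields $g(\beta) \le 1$, which is exactly what was needed.

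Since all of the analytic work is already done inside those three lemmas --- and the delicate constraint linking $N$ to $q$ enters through Lemma~\ref{lem:first_asymptotic_interval} --- there is no real obstacle remaining at this stage; the corollary is purely an assembly step. The one thing to be careful about is the bookkeeping at the interval endpoints (making sure the three intervals genuinely overlap rather than leaving a gap) and confirming that the hypotheses of each of the three lemmas are actually met with the present values of $c_4$, $q$ and $\beta_0$. I would conclude by citing the three lemmas in order, deducing $g(\beta) \le 1$ throughout $(0,\beta_0]$, and hence the inequality at line~\eqref{eq:big_goal}.
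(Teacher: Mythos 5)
Your proof is correct and matches the paper's approach exactly: the paper simply asserts that the corollary is ``a straightforward consequence of Lemmas~\ref{lem:first_asymptotic_interval}--\ref{lem:third_asymptotic_interval},'' and you have supplied the missing bookkeeping, checking that the three lemmas' intervals overlap and exhaust $(0,\beta_0]$ and that $g(\beta)\le 1$ on that interval suffices (as noted immediately after line~\eqref{eq:definition_of_g}).
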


\subsection{Splitting the Sum to Get a Better Bound for $\boldsymbol{\rho_0}$}
\label{ssec:splitting_sum}

Let $\epsilon$ be a positive constant.
Suppose now that $\Delta$ is a positive integer whose depends on~$\epsilon$ and the field size~$q$. It follows from the
above (for appropriate choices of~$q$, $\beta_0$ and~$c_4$) that
\begin{align*}
\rho_0 
  &\le \sum_{1 \le j < k \beta_0} \beta^{k \beta}  \\
  &= \sum_{1 \le j < k \beta_0} \left(\frac{j}{k}\right)^j & \tag{since $\beta = \frac{j}{k}$} \\
  &= \zeta+ \theta,
\end{align*}
where
\begin{equation}
\label{eq:defn_of_zeta}
 \zeta = \sum_{1 \le j \le \Delta} \left( \frac{j}{k} \right)^j
\end{equation}
and
\begin{equation}
\label{eq:defn_of_theta}
\theta = \sum_{\Delta < j < k \beta_0} \left( \frac{j}{k} \right)^j.
\end{equation}

It follows from the above that
\begin{align*}
\theta &\le \sum_{\Delta < j \le k \beta_0} \left( \frac{j}{k} \right) ^j\\
 &\le \sum_{\Delta < j \le k \beta_0} \beta_0^j \\
 &\le \beta_0^{\Delta + 1} \sum_{j \ge 0} \beta_0^j \\
 &= \frac{\beta_0^{\Delta+1}}{1 - \beta_0} \\
 &\le {\textstyle{\frac{1}{10}}} \epsilon
\end{align*}
 provided that
 \[
  \Delta \ge \frac{\ln (\epsilon^{-1}) + \ln 10 - \ln (1 - \beta_0)}{\ln (1/\beta_0)} - 1.
 \]
 Choices of~$\Delta$ that satisfy this inequality for various field sizes (and corresponding choices of~$\beta_0$)
 are shown in Figure~\ref{fig:choices_of_Delta} on page~\pageref{fig:choices_of_Delta}.
 \begin{figure}[t!]
 \begin{center}
 \begin{tabular}{ccc|ccc|ccc}
 $q$ & $\beta_0$ & $\Delta$ & $q$ & $\beta_0$ & $\Delta$ & $q$ & $\beta_0$ & $\Delta$ \\[2pt] \hline
 $2$ \rule{0pt}{12pt} & $\frac{6}{43}$ & $\lceil \frac{51}{100} \ln (\epsilon^{-1}) + \frac{1}{4} \rceil$ 
  & $9$ & $\frac{6}{11}$ & $\lceil \frac{33}{20} \ln (\epsilon^{-1}) + \frac{41}{10} \rceil$
  & $47$--$59$ & $\frac{6}{7}$ & $\lceil \frac{13}{2} \ln (\epsilon^{-1}) + \frac{133}{5} \rceil$ \\
 $3$ \rule{0pt}{12pt} & $\frac{1}{4}$ & $\lceil \frac{73}{100} \ln (\epsilon^{-1}) + \frac{9}{10} \rceil$
  & $11$ & $\frac{4}{7}$ & $\lceil \frac{179}{100} \ln (\epsilon^{-1}) + \frac{47}{10} \rceil$
  & $61$--$71$ & $\frac{8}{9}$ & $\lceil \frac{17}{2} \ln (\epsilon^{-1}) + \frac{149}{4} \rceil$ \\
 $4$ \rule{0pt}{12pt} & $\frac{8}{25}$ & $\lceil \frac{22}{25} \ln (\epsilon^{-1}) + \frac{7}{5} \rceil$
  & $13$ & $\frac{3}{5}$ & $\lceil \frac{49}{25} \ln (\epsilon^{-1}) + \frac{107}{20} \rceil$
  & $73$--$83$ & $\frac{10}{11}$ & $\lceil \frac{21}{2} \ln (\epsilon^{-1}) + \frac{242}{5} \rceil$ \\
 $5$ \rule{0pt}{12pt} & $\frac{3}{8}$ & $\lceil \frac{51}{50} \ln (\epsilon^{-1}) + \frac{19}{10} \rceil$
  & $16$--$19$ & $\frac{2}{3}$ & $\lceil \frac{99}{40} \ln (\epsilon^{-1}) + \frac{37}{5} \rceil$
  & $\ge 89$ & $\frac{12}{13}$ & $\lceil \frac{25}{2} \ln (\epsilon^{-1}) + \frac{599}{10} \rceil$ \\
 $7$ \rule{0pt}{12pt} & $\frac{6}{13} $ & $\lceil \frac{13}{10} \ln (\epsilon^{-1}) + \frac{14}{5} \rceil$
  & $23$--$29$ & $\frac{3}{4}$ & $\lceil \frac{87}{25} \ln (\epsilon^{-1}) + \frac{119}{10} \rceil$ & \\
 $8$ \rule{0pt}{12pt} & $\frac{1}{2}$ & $\lceil \frac{29}{20} \ln (\epsilon^{-1}) + \frac{17}{5} \rceil$
  & $31$--$43$ & $\frac{4}{5}$ & $\lceil \frac{9}{2} \ln (\epsilon^{-1}) + \frac{50}{3} \rceil$ & \\
 \end{tabular}
 \end{center}
 \caption{Choices of $\Delta$ for Various Field Sizes}
 \label{fig:choices_of_Delta}
 \end{figure}
 When $N \ge 18$ and $\beta_0 = \frac{2N}{2N+1}$ (as in Subsection~\ref{ssec:asymptotics_1})
 \begin{align*}
 \frac{\ln (\epsilon^{-1}) + \ln 10 - \ln(1 - \beta_0)}{\ln(1/\beta_0} - 1
  &= \frac{\ln (\epsilon^{-1} + \ln 10 - \ln \left( {\textstyle{\frac{1}{2N+1}}}\right)}{\ln \left(1 + {\textstyle{\frac{1}{2N}}}\right)} - 1 \\
  &\le \frac{\ln (\epsilon^{-1}) + \ln 10 + \ln (2N+1)}{{\textstyle{\frac{1}{2N+1}}}}  - 1
     \tag{since $\ln \left(1 + \frac{1}{2N}\right) \ge \frac{1}{2N+1}$} \\
  &= (2N+1) \ln (\epsilon^{-1}) + (2N+1) \ln (2N+1) + (2N+1) \ln 10 - 1.
 \end{align*}
 It therefore suffices to ensure that
 \[
  \Delta \ge \lceil (2N+1) \ln (\epsilon^{-1}) + (2N+1) \ln (2N+1) + (2N+1) \ln 10 - 1 \rceil
 \]
 in this case.

 It also follows from the above that
 \begin{align*}
 \zeta &\le \sum_{1 \le j \le \Delta} \left( \frac{j}{k} \right)^j \\
  &\le \sum_{1 \le j \le \Delta} \left( \frac{\Delta}{k} \right)^j \\
  &< \sum_{j \ge 1} \left( \frac{\Delta}{k} \right)^j \\
  &= \frac{\Delta/k}{1-\Delta/k} \\
  &\le \textstyle{\frac{4}{5}} \epsilon
 \end{align*}
 provided that $k \ge \lceil (\frac{5}{4} \epsilon^{-1} + 1) \Delta \rceil$, and this is the case if
 $k \ge \lceil (\frac{5}{4} \epsilon^{-1} + 1) (\widehat{\Delta} + 1) \rceil$, where $\Delta = \lceil \widehat{\Delta} \rceil$.
 Suitable choices of~$k$ for small field sizes are as shown in Figure~\ref{fig:choices_of_k} on page~\pageref{fig:choices_of_k}.
 \begin{figure}[t!]
 \begin{center}
 \begin{tabular}{c|c|c}
$q$  & Lower Bound for $k$ & Bound when $\epsilon = \frac{1}{10}$ \\[2pt] \hline
$2$ \rule{0pt}{12pt} &
 $\lceil \frac{51}{80} \epsilon^{-1} \ln (\epsilon^{-1}) + \frac{25}{16} \epsilon^{-1} + \frac{51}{100} \ln (\epsilon^{-1}) + \frac{5}{4} \rceil$  &
 $33$ \\
$3$ \rule{0pt}{12pt} &
 $\lceil \frac{73}{80} \epsilon^{-1} \ln (\epsilon^{-1}) + \frac{19}{8} \epsilon^{-1} + \frac{73}{100} \ln (\epsilon^{-1}) + \frac{19}{10} \rceil$ &
 $49$ \\
$4$ \rule{0pt}{12pt} &
$\lceil \frac{11}{10} \epsilon^{-1} \ln (\epsilon^{-1}) + 3 \epsilon^{-1} + \frac{22}{25} \ln (\epsilon^{-1}) + \frac{12}{5} \rceil$ &
$60$ \\
$5$ \rule{0pt}{12pt} &
$\lceil \frac{51}{40} \epsilon^{-1} \ln (\epsilon^{-1}) + \frac{29}{8} \epsilon^{-1} + \frac{51}{50} \ln (\epsilon^{-1}) + \frac{29}{10} \rceil$ &
$71$ \\
$7$ \rule{0pt}{12pt} &
$\lceil \frac{13}{8} \epsilon^{-1} \ln (\epsilon^{-1}) + \frac{19}{4} \epsilon^{-1} + \frac{13}{10} \ln (\epsilon^{-1}) + \frac{19}{5} \rceil$ &
$92$ \\
$8$ \rule{0pt}{12pt} &
$\lceil \frac{29}{16} \epsilon^{-1} \ln (\epsilon^{-1}) + \frac{11}{2} \epsilon^{-1} + \frac{29}{20} \ln (\epsilon^{-1}) + \frac{22}{5} \rceil$ &
$105$ \\
$9$ \rule{0pt}{12pt} &
$\lceil \frac{33}{16} \epsilon^{-1} \ln (\epsilon^{-1}) + \frac{51}{8} \epsilon^{-1} + \frac{33}{20} \ln (\epsilon^{-1}) + \frac{51}{10} \rceil$ &
$121$ \\
$11$ \rule{0pt}{12pt} &
$\lceil \frac{179}{80} \epsilon^{-1} \ln (\epsilon^{-1}) + \frac{57}{8} \epsilon^{-1} + \frac{179}{100} \ln (\epsilon^{-1}) + \frac{57}{10} \rceil$ &
$133$ \\
$13$ \rule{0pt}{12pt} &
$\lceil \frac{49}{20} \epsilon^{-1} \ln (\epsilon^{-1}) + \frac{127}{16} \epsilon^{-1} + \frac{49}{25} \ln (\epsilon^{-1}) + \frac{127}{20} \rceil$&
$147$  \\
 $16$--$19$ \rule{0pt}{12pt} &
 $\lceil \frac{99}{32} \epsilon^{-1} \ln (\epsilon^{-1}) + \frac{21}{2} \epsilon^{-1} + \frac{99}{40} \ln (\epsilon^{-1}) + \frac{42}{5} \rceil$&
 $191$  \\
 $23$--$29$ \rule{0pt}{12pt} &
 $\lceil \frac{87}{20} \epsilon^{-1} \ln (\epsilon^{-1}) + \frac{129}{8} \epsilon^{-1} + \frac{87}{25} \ln (\epsilon^{-1}) + \frac{129}{10} \rceil$ &
 $283$ \\
 $31$--$43$ \rule{0pt}{12pt} &
 $\lceil \frac{45}{8} \epsilon^{-1} \ln (\epsilon^{-1}) + \frac{265}{12} \epsilon^{-1} + \frac{9}{2} \ln (\epsilon^{-1}) + \frac{53}{3} \rceil$ &
 $379$ \\
 $47$--$59$ \rule{0pt}{12pt} &
 $\lceil \frac{65}{8} \epsilon^{-1} \ln (\epsilon^{-1}) + \frac{69}{2} \epsilon^{-1} + \frac{13}{2} \ln (\epsilon^{-1}) + \frac{138}{5} \rceil$ &
 $575$ \\
 $61$--$71$ \rule{0pt}{12pt} & 
 $\lceil \frac{85}{8} \epsilon^{-1} \ln (\epsilon^{-1}) + \frac{765}{16} \epsilon^{-1} + \frac{17}{2} \ln (\epsilon^{-1}) + \frac{153}{4} \rceil $ &
 $781$  \\
 $73$--$83$ \rule{0pt}{12pt} &
 $\lceil \frac{105}{8} \epsilon^{-1} \ln (\epsilon^{-1}) + \frac{247}{4} \epsilon^{-1} + \frac{21}{2} \ln (\epsilon^{-1}) + \frac{247}{5} \rceil$ &
 $994$ \\
 $\ge 89$ \rule{0pt}{12pt} &
 $\lceil \frac{125}{8} \epsilon^{-1} \ln (\epsilon^{-1}) + \frac{609}{8} \epsilon^{-1} + \frac{25}{2} \ln (\epsilon^{-1}) + \frac{609}{10} \rceil$&
 $1211$
 \end{tabular}
 \end{center}
 \caption{Choices of~$k$ for Various Field Sizes}
 \label{fig:choices_of_k}
 \end{figure}
 
 When $N \ge18$, $c_4 = 2 + \frac{1}{N}$, and $\beta_0 =\frac{2}{c_4} = \frac{2N}{2N+1}$, it suffices that
 \begin{multline}
 \label{eq:asymptotic_bound_for_k}
  k \ge \left\lceil \left({\textstyle{\frac{5}{4}}} \epsilon^{-1} + 1\right)   ((2N+1) \ln (\epsilon^{-1}) + (2N+1) \ln (2N+1)
   + (2N+1) \ln 10  \right\rceil \\ \in \Theta(\epsilon^{-1} N(\ln (\epsilon^{-1}) + \ln N)).
 \end{multline}
 In particular, when $\epsilon = \frac{1}{10}$, it suffices to ensure that
 \begin{equation}
 \label{eq:asymptotic_bound_for_k_with_specific_failure_probability}
 k \ge \left\lceil (2N+1) \ln (2N+1) = {\textstyle{\frac{167}{5}}} \ln (2N+1) \right\rceil.
\end{equation}
 
 It now follows that $\rho_0 \le \frac{1}{10} \epsilon + \frac{4}{5} \epsilon = \frac{9}{10} \epsilon$ provided that this constraint on~$k$ can
 be satisfied.
 
 \subsection{Completion of the Analysis for the Case $\boldsymbol{q \le n^2}$}
 
 Suppose next that one wishes to ensure that $\rho_1 \le \frac{1}{20}\epsilon$, so that $\rho \le \frac{19}{20} \epsilon$.
 If the constraint on~$k$, described above, can be satisfied, then it follows
 by the inequality at line~\eqref{eq:bound_for_rho1} that it suffices to choose~$c_2$ such that $2q^{-c_2} \le 
 \frac{\epsilon}{20}$, that is,
 \[
  c_2 \ge \log_q (40 \epsilon^{-1}) = \frac{\ln (\epsilon^{-1}) + \ln 40}{\ln q}.
 \]
 It remains to choose $c_2 + \ell$ rows of the $(n + \ell) \times n$ matrix~$B$ uniformly and independently
 from $\matgrp{\F}{1}{n}$. Suppose that that the set of rows of the original matrix~$A$, and the $k$ ``sparse''
 rows selected as described above, are linearly independent. Then it follows by 
 Lemma~\ref{lem:dense_selection_of_vectors} that (once the remaining rows of~$B$ are chosen uniformly
 and independently) the rank of~$B$ is less than~$n$ with probability at most~$q^{-\ell}$. Furthermore, if
 $q \ge 3$ then the top $n$~rows of~$B$ are linearly independent --- so that we can set $\ell = 0$ --- with
 probability at least $1 - \frac{1}{q-1}$. Thus, if we wish to ensure
 that this probability is at most $\frac{1}{20}\epsilon$ then it suffices to ensure that either
 \[
  \ell \ge \log_q (20 \epsilon^{-1}) = \textstyle{\frac{\ln (\epsilon^{-1}) + \ln 20}{\ln q}} \quad
   \text{or} \quad q \ge 20 \epsilon^{-1} + 1 \text{ (and $\ell = 0$).}
 \]
 It now follows that if $c_4$, $c_2$ and~$\ell$ have all been chosen as described above then the probability
 that $B$ has rank less than~$n$ is at most
 \[
  \rho + \frac{\epsilon}{20} (1-\rho) \le \rho + \frac{\epsilon}{20} \le \frac{19 \epsilon}{20} + \frac{\epsilon}{20} = \epsilon.
 \]
 Choices of~$c_2$ and~$\ell$ satisfying the above constraints, along with the constraints at
 line~\eqref{eq:constraints_on_c2_and_c4}, are shown in Figure~\ref{fig:choices_of_c2_and_ell}, for the case that
 $\epsilon = \frac{1}{10}$.
 \begin{figure}[t!]
 \begin{center}
 \begin{tabular}{ccc|ccc|ccc}
 $q$ & $c_2$ & $\ell$ & $q$ & $c_2$ & $\ell$ & $q$ & $c_2$ & $\ell$ \\ \hline
 $2$ \rule{0pt}{12pt} & $9$ & $8$ & $5$ & $4$ & $4$ &  $16$--$19$ & $3$ & $2$   \\
 $3$ \rule{0pt}{12pt} & $6$ & $5$ & $7$ & $4$ & $3$ &   $23$--$89$ & $2$ & $2$   \\
 $4$ \rule{0pt}{12pt} & $5$ & $4$ &  $8$--$13$ & $3$ & $3$ &  &
 \end{tabular}
 \end{center}
 \caption{Choices of~$c_2$ and~$\ell$ for Various Field Sizes when $\epsilon = \frac{1}{10}$}
 \label{fig:choices_of_c2_and_ell}
 \end{figure}
 
 It now suffices to set $\sigma = c_3 \left( \frac{q-1}{q} \right)= 3 c_4 \left( \frac{q-1}{q} \right)$, for $c_4$ as given
 in Figure~\ref{fig:choices_of_c4} and
 $\tau = c_2 + \ell$, for $c_2$ and~$\ell$ as given in Figure~\ref{fig:choices_of_c2_and_ell}, in order to establish
 the claim in Theorem~\ref{thm:constants_for_conditioner} when $k = n - m - c_2$ is greater than or equal to the lower
 bound given in Subsection~\ref{ssec:splitting_sum} and $q \le n^2$, for $c_2$ as given above. Setting $\upsilon$ to be
 the sum of~$\ell$ and the lower bound for~$k$, described in Subsection~\ref{ssec:splitting_sum},
 suffices to establish these claims when $n - m - c_2$ is less than the lower
 bound for~$k$ and $q \le n^2$ as well,  for $c_2$ as above.
 The values shown in Figure~\ref{fig:summary_table} have been obtained using these equations.

 Similarly, Theorem~\ref{thm:second_constants_for_conditioner} can be obtained by setting $c_4 = 2 + \frac{1}{N}$
 for $N \ge 18$, $c_3 = 3 c_4 = 6 + \frac{3}{N}$, $\sigma =  c_3 (1 - \frac{1}{q})$,
 $c_2 = \left\lceil \frac{\ln (40 \epsilon^{-1})}{\ln q} \right\rceil$, $\ell = \left\lceil \frac{\ln 20 \epsilon^{-1}}{ln q} \right\rceil$
 if $\ell \le 20 \epsilon^{-1} + 1$, setting $\ell = 0$ otherwise, setting $\tau = c_2 + \ell$, and setting $\upsilon$ to the sum
 of~$\ell$ and the lower bound for~$k$ described in Subsection~\ref{ssec:splitting_sum}, above.
 
 \subsection{Analysis for the Case $\boldsymbol{q > n^2}$}
 
A slight variant of the argument from Wiedemann~\cite{wied86} can be applied when $q > n^2$. Since
dense linear algebra is certainly adequate for computations on small matrices it will be assumed that
$n \ge 7$, so that $q \ge 49$. 

Suppose, once again, that $A$ is an $m \times n$ matrix over a field~$\F$ that is either infinite or has size $q > n^2$.
Let $\widehat{q}$ be the largest power of a prime that is less than or equal to~$n^2$.  It suffices to apply the
above construction, using the choices of $c_4$, $c_3$, $\beta_0$, $c_2$ and~$\ell$ appropriate for a field with
size~$\widehat{q}$ (so that $\ell = 0$) --- except that, after choosing entries of rows that might be nonzero, the
remaining entries of the matrix~$B$, to be filled in, should be chosen uniformly and independently from a finite
subset~$S$ of~$\F$ with size at least $n^2$, rather than from the finite field with size~$\widehat{q}$.

In order to see that this process is reliable, consider yet another matrix --- namely, a matrix~$\widehat{B}$
obtained by placing a distinct indeterminate into each row entry that is assigned a value from~$S$, above,
instead of~$0$. Let us denote the indeterminate placed into the $i^{\text{th}}$ new row, in column~$j$,
by $z_{i, j}$. Since $\ell = 0$ this results in an $n \times n$ matrix whose entries are elements of~$\F$
and indeterminates. Let $\widehat{f}$ be the determinant of this matrix --- a multivariate polynomial with
total degree at most $n-m$, since only $n-m$ rows include indeterminates, and each entry of such a row
has total degree at most one.

Since the rows of the $m \times n$ matrix~$A$ are linearly independent, there exists a sequence of
integers $i_1, i_2, \dots, i_m$ such that
\[
 1 \le i_1 < i_2 < \dots < i_m \le n
\]
and such that the $m \times m$ submatrix, including columns $i_1, i_2, \dots, i_m$, is nonsingular.
Permuting rows of~$A$ as needed, we may assume without loss of generality that the entry of
the $j^{\text{th}}$ row of~$A$ in column~$i_j$ is nonzero, for $1 \le j \le m$. Consequently, if the
$j^{\text{th}}$ row of~$A$ was replaced by a row whose $i_j^{\text{th}}$ entry is~$1$ and whose
other entries are~$0$, this would result in an $n \times m$ matrix~$\widehat{A}$ whose rows are
linearly independent as well. Indeed, the $m \times m$ submatrix including the entries in columns
$i_1, i_2,\dots, i_m$ would have determinant~$1$.

Similarly, the rows of this matrix are linearly independent when the entries are viewed as elements
of the finite field~$\F_{\widehat{q}}$ with size~$\widehat{q}$, instead of as elements of~$\F$. Let
us call this matrix (an $m \times n$ matrix with entries in~$\F_{\widehat{q}}$) $\widetilde{A}$. 
Note that the process, described above, to produce new rows to obtain~$B$ from~$A$, is independent
of the entries in the rows of~$A$ --- it only depends on the number~$m$ of rows and~$n$ of columns
of~$A$. With that noted, let us consider yet another $n \times n$ matrix, namely a matrix with entries
in~$F_{\widehat{q}}$ whose first $m$ rows are the rows of~$\widetilde{A}$ and whose remaining rows
are produced by initially deciding to set the same entries of rows to~$0$ as for the new rows of~$B$,
and whose remaining entries are chosen uniformly and independently from~$\F_{\widehat{q}}$. It follows
by the analysis for the case $q \le n^2$ that this matrix is nonsingular with some probability
$\sigma \ge \frac{8}{9}$.

Let us suppose that this is the case. Then there must exist a set of column indices $i_{m+1}, i_{m+2},
\dots, i_n$ such that
\[
 1 \le i_{m_1} < i_{m+2}, \dots, i_n \le n,
 \quad
 \{ i_1, i_2, \dots, i_m \} \cup \{i_{m+1}, i_{m+2}, \dots, i_n \} = \{1, 2, \dots, n \},
\]
and the entries of~$\widetilde{A}$ in row~$j$ and column~$i_j$ are all nonzero, for $1 \le j \le n$.
Consequently if $m+1 \le j \le n$ then the entry of~$\widehat{B}$ in row~$j$ and column~$i_j$ is an
indeterminate, $z_{j-m, i_j}$, rather than zero. It now follows that the above polynomial~$\widehat{f}$
is not identically zero (in this case): For if one sets the value of each indeterminate $z_{j-m, i_j}$ to
be~$1$ and one sets the value of all other indeterminates to be~$0$, then the value of this polynomial
is the product of $\pm 1$ and the determinant of the submatrix of~$A$ including the entries in columns
$i_1, i_2, \dots, i_m$ --- which is nonsingular, as noted above. It now follows by an application of the
Schwartz-Zippel lemma~\cite{schwar80, zipp79} that the above matrix~$B$ is singular, in this particular case,
with probability at most $\frac{n-m}{|S|} \le \frac{n}{|S|} \le \frac{1}{n}$.

It follows that the overall probability that $B$ is singular is at most
\[
 (1 - \sigma) + \textstyle{\frac{\sigma}{n}} \le \textstyle{\frac{1}{9} + \frac{8}{9n}},
\]
as needed to complete the proofs of Theorems~\ref{thm:constants_for_conditioner}
and~\ref{thm:second_constants_for_conditioner}.
  
\bibliographystyle{plain}

\appendix
\section{The Rest of Wiedemann's Argument}

\newcommand{\macsloane}{4}

This appendix includes additional details of Wiedemann's proof of his Theorem~$1'$. While notation has
been changed to agree with the rest of this report, and a few more details have been included, this part
of the proof is essentially as given by Wiedemann~\cite{wied86}. The bulk of this is the beginning of the derivation of an
upper bound for the probability that $B$ has rank less than~$n$ when $k > c_3 \ln n$ and the additional
rows of~$B$ are chosen as described in Section~\ref{sec:modified_proof}.

\subsection{Getting Started}

As noted by Wiedemann~\cite{wied86}, the number of nonzero entries in a vector is called its
\textbf{\emph{Hamming weight}}.
Wiedemann's argument begins with a consideration of a subspace~$C$ of~$\matgrp{\F}{1}{n}$ with dimension~$m$ ---
specifically, the row space of the matrix~$A$ introduced at the beginning of Section~\ref{sec:modified_proof}.
For $0 \le j \le n$ the number of elements of~$C$ with Hamming
weight~$j$ is denoted by~$a[j]$, and the \textbf{\emph{weight enumerator polynomial}} for the vector space~$C$ is
defined to be the polynomial
\[
 a(r) = \sum_{j=0}^n a[j] r^j \in \F[r].
\]

Wiedemann begins by establishing the following claims, which concern the weight enumerator polynomial of
a subspace~$C$ of~$\matgrp{\F}{1}{n}$ with dimension~$m$. Short, readable proof of each of the following
can be found in Wiedemann~\cite{wied86}.

\textbf{Proposition 1} (Wiedemann \cite{wied86}:
Let $C$ be any subspace of~$\matgrp{\F}{1}{n}$ with dimension~$m$.
Let $a[j]$ denote the number of elements of~$C$ with Hamming weight~$j$. Then for each integer~$i$ such that
$0 \le i \le n$,
\[
 \sum_{j=0}^i a[j] \le \sum_{j=0}^i \binom{m}{j} (q-1)^j.
\]

\textbf{Proposition 2} (Wiedemann \cite{wied86}):
If $C$ is a subspace of~$\matgrp{\F}{1}{n}$ with dimension~$m$,
and $a \in \F[r]$ is the weight enumerator polynomial for~$C$,
then, for $0 \le r \le 1$, $a(r) \le (1 + (q-1)r)^m$.

\subsection{Getting to Equations~\eqref{eq:bound_for_rho}--\eqref{eq:definition_of_beta}}

Wiedemann continues by considering the probability that a specific $\F$-linear combination
of $j$ of the first $k$ generated rows (with all multipliers nonzero) yields a particular vector that includes
$i$ nonzero entries. The probability that a fixed entry of this vector is zero is
\begin{align*}
 z^j + \frac{1}{q} \left( \sum_{h=0}^{j-1} \binom{j}{h} z^h (1-z)^{j-h} \right) 
   &= \left( \frac{q-1}{q} \right) z^j + \frac{1}{q} \left( \sum_{h=0}^j \binom{j}{h} z^h (1-z)^{j-h} \right) \\
   &= \left(\frac{q-1}{q}\right) z^j + \frac{1}{q} \tag{by the Binomial Theorem} \\
   &= z^j + \frac{1}{q}(1-z^j).
\end{align*}
On the other hand, the probability that a fixed entry of this vector has a specific nonzero value in~$\F$ is
\begin{align*}
\frac{1}{q} \sum_{h=0}^{j-1} \binom{j}{h} z^h (1-z)^{j-1}
 &= \frac{1}{q} \sum_{h=0}^j \binom{j}{h} z^h (1-z)^{j-h} - \frac{1}{q} z^j \\
 &= \frac{1}{q} (1 - z^j) \tag*{(by the Binomial Theorem, once again).}
\end{align*}
It therefore follows that a specific $\F$-linear combination of $j$ of the first $k$ generated rows (with all
multipliers nonzero) yields a particular vector including $i$ nonzero entries is
\[
 \left( z^j + \frac{1}{q}\left(1-z^j\right)\right)^{n-i} \left(\frac{1}{q} \left(1-z^j\right)\right)^i,
\]
as claimed at line~(5) in Wiedeman~\cite{wied86}.

Continuing to follow Wiedemann's argument, let $a \in \F[r]$ be the weight enumerator polynomial for the
row space of~$A$, and let $\rho$ be the probability that the rows of~$A$ and
the first $k$ (sparse) vectors that have been generated are linearly dependent.

For $0 \le i \le n$ there are (by definition) $a[i]$ vectors in the row space of~$A$ that have exactly $i$ nonzero
entries. For $1 \le j \le k$ there are $\binom{k}{j} (q-1)^j$ ways to choose $\F$-linear combinations of the $k$
generated rows, for which exactly $j$ of the multipliers are nonzero.  As noted, again, by Wiedemann, it now
follows that
\begin{align*}
\rho &\le \sum_{i=0}^n a[i] \sum_{j=1}^k \binom{k}{j} (q-1)^j \left(z^j + \frac{1}{q}(1-z^j)\right)^{n-i} 
  \left(\frac{1}{q}(1-z^j)\right)^i \\
   &= \sum_{j=1}^k \binom{k}{j} (q-1)^j \left(z^j + \frac{1}{q} (1-z^j)\right)^n
     \sum_{i=0}^n a[i] \left( \frac{q^{-1} (1-z^j)}{z^j + q^{-1} (1-z^j)} \right)^i \\
   &= \sum_{j=1}^k \binom{k}{j} (q-1)^j \left( z^j + \frac{1}{q} (1-z^j\right)^n
     a\left(\frac{q^{-1} (1-z^j)}{z^j + q^{-1}(1-z^j)}\right).
\end{align*}

Now, by Wiedemann's Proposition~2 (since the row space of~$A$ has dimension~$m$)
\begin{align*}
 a\left(\frac{q^{-1}(1-z^j)}{z^j + q^{-1}(1-z^j)}\right)
   &\le \left(1 + (q-1) \left(\frac{q^{-1}(1-z^j)}{z^j + q^{-1}(1-z^j)}\right)\right)^m \\
   &= \left( \frac{z^j + q^{-1}(1-z^j) + (1-z^j) - q^{-1}(1-z^j)}{z^j + q^{-1}(1-z^j)} \right)^m \\
   &= \left( z^j + q^{-1} (1-z)^j \right)^{-m}
\end{align*}
so that
\begin{equation}
\label{eq:first_inequality_for_rho}
\begin{split}
\rho &\le \sum_{j=1}^k \binom{k}{j} (q-1)^j \left( z^j + q^{-1} (1-z^j) \right)^{n-m} \\
  &= \sum_{j=1}^k \binom{k}{j} (q-1)^j \left( q^{-1} + \frac{(q-1)}{q} z^j \right)^{n-m}.
\end{split}
\end{equation}

Recall that $k > c_3 \ln n$ and that $z = 1 - \frac{c_3 \ln n}{k} \ge 0$. Then, since $e^x \ge 1 + x$ for all $x \in \R$,
\[
 e^{-\frac{c_3 \ln n}{k}} \ge 1 - \frac{c_3 \ln n}{k} = z.
\]
Consequently, for $j \ge 1$,
\[
 z^j \le \left( e^{-\frac{c_3 \ln n}{k}} \right)^j = e^{-\frac{c_3 j \ln n}{k}}.
\]

Recall that $\beta=\frac{j}{k}$ and that $c_4 = \frac{c_3}{3}$ (as shown at lines~\eqref{eq:definition_of_beta}
and~\eqref{eq:definition_of_c4}). By assumption, $q \le n^2$ so that $q^{-1} \ge n^{-2}$. Now, since $c_4 \beta \ge 0$,
\[
 (nq)^{-c_4 \beta} \ge n^{-3 c_4 \beta} = n^{-c_3 \beta} = e^{-\frac{c_3 j \ln n}{k}} \ge z^j.
\]
It now follows by the equation at line~\eqref{eq:first_inequality_for_rho}, above, that
\[
 \rho \le \sum_{j=1}^k \binom{k}{j} (q-1)^j \left(q^{-1} + \frac{(q-1)}{q} (nq)^{-c_4 \beta}\right)^{n-m},
\]
as shown at line~(6) of Wiedemann~\cite{wied86}. Splitting the above sum at $k \beta_0$, the equations
at lines~\eqref{eq:bound_for_rho}--\eqref{eq:definition_of_beta} are now obtained.

\subsection{Getting to Equation~\eqref{eq:bound_for_rho1}}

Suppose (as shown at line~\eqref{eq:constraints_on_c2_and_c4}, above) that $c_2 \ge \frac{3}{2} \ge \log_2 e$
and $c_4 \ge \frac{2}{\beta_0}$. Then $\ln (n-m) \ge \ln c_2 \ge \ln \log_2 e$, so that
\[
 c_4 \beta_0 \ge 2 \ge \frac{\ln (q-1) + \ln (n-m) + \ln \log_2 e}{\ln q + \ln n}.
\]
Recall that $\beta = \frac{j}{k}$, so that
\begin{align*}
\rho_1 &= \sum_{k \beta_0 \le j \le k} \binom{k}{j} (q-1)^j \left( q^{-1} + \frac{(q-1)}{q} (nq)^{-c_4 \beta}\right)^{n-m} \\
 &\le \sum_{k \beta_0 \le j \le k}  \binom{k}{j} (q-1)^j  \left( q^{-1} + \frac{(q-1)}{q} (nq)^{-c_4 \beta_0} \right)^{n-m} \\
   \tag{since $-c_4 \beta \le - c_4 \beta_0$ if $j \ge k \beta_0$} \\
 &\le \sum_{k \beta_0 \le j \le k} \binom{k}{j} (q-1)^j \left( q^{-1}
   + \frac{(q-1)}{q} (nq)^{-\frac{\ln (q-1) + \ln (n-m) + \ln \log_2 e}{\ln q + \ln n}} \right)^{n-m}
     \tag{by the above inequality} \\
&= \sum_{k \beta_0 \le j \le k} \binom{k}{j} (q-1)^j \left( q^{-1} + \frac{(q-1)}{q}
  e^{-(ln(q-1) + \ln (n-m) + \ln \log_2 e)} \right)^{n-m} \\
&= \sum_{k \beta_0 \le j \le k} \binom{k}{j} (q-1)^j  \left( q^{-1} + \frac{(q-1)}{q} \frac{1}{(q-1)(n-m)\log_2 e} \right)^{n-m} \\
&= \sum_{k \beta_0 \le j \le k} \binom{k}{j} (q-1)^j \left( \frac{1}{q} \left( 1+ \frac{\ln 2}{(n-m)} \right) \right)^{n-m} \\
&\le \sum_{k \beta_0 \le j \le k} \binom{k}{j} (q-1)^j \left( \frac{1}{q} e^{\frac{ln 2}{(n-m)}} \right)^{n-m}
  \tag{again, since $e^x \ge 1+x$ for all $x \in \R$} \\
&= \sum_{k \beta_0 \le j \le k} \binom{k}{j} (q-1)^j \left( \frac{2^{\frac{1}{n-m}}}{q} \right)^{n-m} \\
&= 2 q^{m-n} \sum_{k \beta_0 \le j \le k} \binom{k}{j} (q-1)^j \\
&= 2 q^{-(c_2 + k)} \sum_{k \beta_0 \le j \le k} \binom{k}{j} (q-1)^j \tag{since $n = m + c_2 + k$} \\
&\le 2 q^{-(c_2+k)} \sum_{j=0}^k \binom{k}{j} (q-1)^j \\
&= 2 q^{-(c_2+k)} \cdot q^k \tag{by the Binomial Theorem} \\
&= 2 q^{-c_2},
\end{align*}
as claimed at line~\eqref{eq:bound_for_rho1}, above.

\subsection{Getting to Equations~\eqref{eq:first_bound_for_rho0} and~\eqref{eq:definition_of_f}}

Wiedemann continues by applying an inequality for $\binom{k}{j}$, citing Lemma~10.7 of MacWilliams
and Sloane~[\macsloane]. However, this lemma seems to establish a slightly different inequality. Furthermore,
while a proof of a different bound is provided, the proof of this one is left as an exercise for the reader. With that
noted, the inequality used here by Wiedemann is as follows.

\begin{lemma}
\label{lem:macslo}
If $1 \le j \le k-1$ and $\beta = \frac{j}{k}$ then
\[
 \binom{k}{j} \le \left(\beta^{-\beta} (1-\beta)^{-(1-\beta)} \right)^k.
\]
\end{lemma}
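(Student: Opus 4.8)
The plan is to extract the bound from a single term of a binomial expansion. Since $1 \le j \le k-1$ we have $0 < \beta < 1$, so $\beta$ and $1-\beta$ are both strictly positive, and $1 = \beta + (1-\beta)$. Raising to the $k$-th power and applying the Binomial Theorem gives
\[
 1 = (\beta + (1-\beta))^k = \sum_{i=0}^k \binom{k}{i} \beta^i (1-\beta)^{k-i}.
\]
Every term in this sum is nonnegative, so in particular the single term corresponding to $i = j$ is at most the whole sum:
\[
 \binom{k}{j} \beta^j (1-\beta)^{k-j} \le 1.
\]

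Next I would solve for $\binom{k}{j}$. Because $\beta > 0$ and $1-\beta > 0$, the factor $\beta^j (1-\beta)^{k-j}$ is a positive real number, so dividing is legitimate and yields
\[
 \binom{k}{j} \le \beta^{-j} (1-\beta)^{-(k-j)}.
\]
Finally I would rewrite the exponents using $\beta = \frac{j}{k}$, i.e. $j = \beta k$ and $k - j = (1-\beta)k$, to obtain
\[
 \binom{k}{j} \le \beta^{-\beta k} (1-\beta)^{-(1-\beta)k} = \left( \beta^{-\beta} (1-\beta)^{-(1-\beta)} \right)^k,
\]
which is the claimed inequality.

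There is no real obstacle here: the only points that need a word of care are checking that the hypothesis $1 \le j \le k-1$ is exactly what guarantees $0 < \beta < 1$ (so that none of the quantities being inverted or exponentiated is zero or undefined), and noting that $\beta^{-\beta k}$ is literally equal to $\beta^{-j}$ because $\beta k = j$ is an integer, so no subtlety about real powers is involved. The argument does not use Lemma~10.7 of MacWilliams and Sloane at all; it is the clean self-contained derivation that Wiedemann leaves as an exercise.
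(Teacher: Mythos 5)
Your proof is correct, and it takes a genuinely different (and substantially shorter) route than the paper. The paper proves this lemma by invoking a precise two-sided form of Stirling's approximation from MacWilliams and Sloane, writing out the resulting factor $\frac{1}{\sqrt{2\pi\beta(1-\beta)k}}\,e^{(\cdots)/12k}$ explicitly, and then spending the bulk of the proof on a case analysis (over $\beta \le \tfrac13$, $\beta \ge \tfrac23$, and $\tfrac13 \le \beta \le \tfrac23$) bounding the auxiliary functions $g(x) = x^{-1} + (1-x)^{-1}$, $h(x) = x(1-x)$, and $j(x) = x^{-3} + (1-x)^{-3}$ just to show that the extra factor is at most~$1$. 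Your argument instead expands $1 = (\beta + (1-\beta))^k$ by the Binomial Theorem, drops all terms but the $i = j$ one, and rearranges --- no asymptotics, no casework, and no dependence on $k \ge 3$ (your version handles $k = 2$ uniformly, whereas the paper treats $k=2$ as a special case). What the paper's longer route buys, in principle, is the sharper bound with the $\frac{1}{\sqrt{2\pi\beta(1-\beta)k}}$ prefactor; but since the paper only ever uses the weaker inequality stated in the lemma, your self-contained binomial-theorem derivation is strictly preferable here, and indeed is the standard ``clean'' proof that the paper's own footnoted remark about MacWilliams and Sloane hints is available.
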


\begin{proof}
The inequality is easily verified when $k=2$ (so that $j=1$ and $\beta = \frac{1}{2}$), so it is sufficient to consider the case
that $k \ge 3$.

As asserted by MacWilliams and Sloane, one (lesser known and more precise) form of Stirling's approximation
asserts that if $\ell \ge 1$ then
\[
\sqrt{2\pi} \ell^{\ell + \frac{1}{2}} e^{-\ell + \frac{1}{12 \ell} - \frac{1}{360 \ell^3}} < \ell ! <
 \sqrt{2 \pi} \ell^{\ell + \frac{1}{2}} e^{-\ell + \frac{1}{12 \ell}}.
\]
It follows from this that if $1 \le j \le k-1$ then
\begin{align*}
\binom{k}{j} &\le \frac{\sqrt{2\pi} k^{k+\frac{1}{2}} e^{-k + \frac{1}{12k}}}{\sqrt{2\pi} j^{j+\frac{1}{2}}
  e^{-j + \frac{1}{12j} - \frac{1}{360 j^3}} \sqrt{2\pi} (k-j)^{k-j+\frac{1}{2}} e^{-(k-j) + \frac{1}{12(k-j)} - \frac{1}{360(k-j)^3}}} \\
 &= \frac{1}{\sqrt{2\pi \beta (1-\beta) k}} \left(\frac{1}{\beta^{\beta}(1-\beta)^{(1-\beta)}} \right)^k
  e^{\frac{1}{12k} - \frac{1}{12j} - \frac{1}{12(k-j)} + \frac{1}{360 j^3} + \frac{1}{360 (k-j)^3}} \\
 &= \frac{1}{\sqrt{2\pi \beta (1-\beta) k}} \left( \beta^{-\beta} (1-\beta)^{-(1-\beta)} \right)^k
    e^{\frac{1}{12k}  \left( 1 - \frac{1}{\beta} - \frac{1}{1-\beta} + \frac{1}{30 k^2 \beta^3}
      + \frac{1}{30 k^2 (1-\beta)^3}\right)} .\\
\end{align*}

Now suppose that either $\beta \le \frac{1}{3}$ or $\beta \ge \frac{2}{3}$. Consider the function
$g(x) = x^{-1} + (1-x)^{-1}$. It is easily checked that $g'(x) = -x^{-2} + (1-x)^{-2}$, so that $g(1/2)  = 0$, and that
$g''(x) = 2x^{-3} + 2(1-x)^{-3} > 0$ when $0 < x < 1$,  so that $g'(x) < 0$ when $0 < x < \frac{1}{2}$ and
$g'(x) > 0$ when $\frac{1}{x} < x < 1$. Consequently, if $0 < \beta \le \frac{1}{3}$ then $g(\beta) \ge g(1/3) = 6$,
and if $\frac{2}{3} \le \beta < 1$ then $g(\beta) \ge g(2/3) = 6$ as well.

Next consider the function $h(x) = x(1-x) = x - x^2$; $h'(x) = 1 -2x$ and $h''(x) = -2 < 0$, so that $h'(x) > 0$ when
$0 < x < \frac{1}{2}$, $h'(1/2) = 0$, and $h'(x) < 0$ when $\frac{1}{2} < x < 1$. Consequently if
$\frac{1}{k} \le x \le 1 - \frac{1}{k}$ then, since $h(1/k) = h(1 - 1/k) = \frac{1}{k} - \frac{1}{k^2}$,
$h(x) \ge \frac{1}{k} - \frac{1}{k^2}$.

Thus if $0 < \beta \le \frac{1}{3}$ or $\frac{2}{3} \le \beta < 1$ then (since $\frac{1}{k} \le \beta \le 1 - \frac{1}{k}$ as well, so
that $k \ge 3$),
\begin{align*}
\binom{k}{j} &\le \frac{1}{\sqrt{2\pi \beta (1-\beta) k}} \left(\beta^{-\beta} (1-\beta)^{-(1-\beta)} \right)^k
    e^{\frac{1}{12k}  \left( 1 - \frac{1}{\beta} - \frac{1}{1-\beta} + \frac{1}{30 k^2 \beta^3} + \frac{1}{30 k^2 (1-\beta)^3}\right)} \\
  &\le \frac{1}{\sqrt{2\pi \beta (1-\beta) k}} \left(\beta^{-\beta} (1-\beta)^{-(1-\beta)} \right)^k
    e^{\frac{1}{12k}  \left( 1 - \frac{1}{\beta} - \frac{1}{1-\beta} + \frac{1}{30 \beta} + \frac{1}{30 (1-\beta)}\right)} 
      \tag{since $k^2 \beta^2 \ge 1$ and $k^2 (1-\beta)^2 \ge 1$} \\
 &= \frac{1}{\sqrt{2\pi h(\beta) k}} \left(\beta^{-\beta} (1-\beta)^{-(1-\beta)} \right)^k
    e^{\frac{1}{12k}  \left( 1 - \frac{29}{30} g(\beta) \right)}  \\
 &\le \frac{1}{\sqrt{2\pi (1/k - 1/k^2)  k}} \left(\beta^{-\beta} (1-\beta)^{-(1-\beta)} \right)^k
    e^{\frac{1}{12k}  \left( 1 - \frac{29}{5} \right)}  
   \tag{since $g(\beta) \ge 6$ and $h(\beta) \ge \frac{1}{k} - \frac{1}{k^2}$} \\
 &= \frac{1}{\sqrt{2\pi (1-1/k)}} \left(\beta^{-\beta} (1-\beta)^{-(1-\beta)} \right)^k e^{-\frac{2}{5k}}  \\
 &\le \frac{1}{\sqrt{4\pi/3}} \left(\beta^{-\beta} (1-\beta)^{-(1-\beta}\right)^k e^{-\frac{2}{5k}} 
     \tag{since $1 - \frac{1}{k} \ge \frac{2}{3}$} \\
 &< \left(\beta^{-\beta} (1-\beta)^{-(1-\beta)}\right)^k,
\end{align*}
since  $1/\sqrt{4\pi/3} < 1$ and $e^{-\frac{2}{5k}} < 1$ as well.

Suppose next that $\frac{1}{3} \le \beta \le \frac{2}{3}$. In this case $g(\beta) \ge g(1/2) = 4$, and $h(\beta) \ge h(1/3) = 
h(2/3) = \frac{2}{9}$. A consideration of the function $j(x) = x^{-3} + (1-x)^{-3}$ along with its first and second derivatives
confirms that if $\frac{1}{3} \le \beta \le \frac{2}{3}$ then $j(\beta) \le j(1/3) = j(2/3) = 54$, so that
\[
 \frac{1}{30k^2 \beta^3} + \frac{1}{30 k^2 (1-\beta)^3}
   = \frac{1}{30 k^2} j(\beta) \le \frac{1}{30 k^2} j(1/3) = \frac{9}{5k^2}.
\]
It now follows that
\begin{align*}
\binom{k}{j}
 &\le \frac{1}{\sqrt{2\pi \beta(1-\beta) k}} \left( \beta^{-\beta} (1-\beta)^{-(1-\beta)} \right)^k
   e^{\frac{1}{12k} \left( 1 - \frac{1}{\beta} - \frac{1}{1-\beta} + \frac{1}{30 k^2 \beta^3} + \frac{1}{k^2 (1-\beta)^3}\right)} \\
 &\le \frac{1}{\sqrt{2\pi h(\beta) k}} \left( \beta^{-\beta} (1-\beta)^{-(1-\beta)} \right)^k
   e^{\frac{1}{12k} \left( 1 - g(\beta) + \frac{9}{5k^2}\right)} \\
 &\le \frac{1}{\sqrt{4\pi k/9}} \left(\beta^{-\beta} (1-\beta)^{-(1-\beta)} \right)^k
  e^{\frac{1}{12k} \left(-3 + \frac{9}{5k^2} \right)}
   \tag{since $g(\beta) \ge 4$ and $h(\beta) \ge \frac{2}{9}$} \\
 &\le \frac{1}{\sqrt{\pi/3}} \left(\beta^{-\beta} (1-\beta)^{-(1-\beta)}\right)^k e^{-\frac{7}{30k}}
   \tag{since $k \ge 3$, so that $-3 + \frac{9}{5k^2} \le -\frac{14}{5}$} \\
 &< \left( \beta^{-\beta} (1-\beta)^{-(1-\beta)}\right)^k,
\end{align*}
as required, since $\frac{1}{\sqrt{\pi/3}} < 1$ and $e^{-\frac{7}{30k}} < 1$ as well.
\end{proof}

It now follows that, since $nq > 1$, $-c_4\beta < 0$, and $n-m \ge k$, so that $\frac{1}{q} + \frac{(q-1)}{q} (nq)^{-c_4 \beta} < 1$,
\begin{align*}
\rho_0
 &= \sum_{1 \le j < k \beta_0} \binom{k}{j} (q-1)^j \left( q^{-1} + \frac{(q-1)}{q} (nq)^{-c_4 \beta} \right)^{n-m} \\
 &< \sum_{1 \le j < k \beta_0} \binom{k}{j} (q-1)^j \left( q^{-1} + \frac{(q-1)}{q} (nq)^{-c_4 \beta} \right)^k \\
 &\le \sum_{1 \le j < k \beta_0} \left( \beta^{-\beta} (1-\beta)^{-(1-\beta)} \right)^k (q-1)^{\beta k}
   \left(q^{-1} + \frac{(q-1)}{q} (nq)^{-c_4 \beta}\right)^k
     \tag{recalling that $\beta = \frac{j}{k}$, and applying Lemma~\ref{lem:macslo}} \\
 &= \sum_{1 \le j < k \beta_0} \left( \frac{(q-1)^\beta}{\beta^{\beta} (1-\beta)^{1-\beta}}
   \left(\frac{1}{q} + \frac{q-1}{q} (nq)^{-c_4 \beta}\right) \right)^k,
\end{align*}
as shown at line~(7) in Wiedemann~\cite{wied86}. Thus
\[
 \rho_0 \le \sum_{1 \le j \le k \beta_0}\left( \frac{1}{\beta^{\beta} (1-\beta)^{1-\beta}} f(q) \right)^k,
\]
as claimed at line~\eqref{eq:first_bound_for_rho0}, above, when $f$ is as defined at line~\eqref{eq:definition_of_f}.

\section{Derivation of Additional Constants Shown in Figure~\ref{fig:choices_of_c4}}
\label{sec:continuation_of_argument}

This appendix provides a derivations of additional constants, shown in Figure~\ref{fig:choices_of_c4}, that
were not derived in Subsection~\ref{ssec:bounding_rho0}.

To begin, one can extend Lemma~\ref{lem:getting_rid_of_pesky_term} as follows.

\begin{lemma}
\label{lem:continuing_to_get_rid_of_pesky_term}
Once again, consider the relationship between~$(1-x)^{-(1-x)}$ and $x^{\gamma x}$, for a negative
constant~$\gamma$, when $x$ is small and positive.
\begin{enumerate}
\renewcommand{\labelenumi}{\text{\textup{(\alph{enumi})}}}
\setlength{\itemsep}{0pt}
\item If $0 < x \le \frac{9}{50}$ then $(1-x)^{-(1-x)} \le x^{\gamma x}$, when $\gamma = -\frac{11}{20}$.
\item If $0 < x \le \frac{19}{100}$ then $(1-x)^{-(1-x)} \le x^{\gamma x}$, when $\gamma = -\frac{9}{16}$.
\item If $0 < x \le \frac{7}{50}$ then $(1-x)^{-(1-x)} \le x^{\gamma x}$, when $\gamma = -\frac{19}{40}$.
\item If $0 < x \le \frac{2}{25}$ then $(1-x)^{-(1-x)} \le x^{\gamma x}$, when $\gamma = -\frac{2}{5}$.
\item If $0 < x \le \frac{1}{20}$ then $(1 - x)^{-(1-x)} \le x^{\gamma x}$, when $\gamma = -\frac{1}{3}$.
\item If $0 < x \le \frac{1}{53}$ then $(1-x)^{(1-x)} \le x^{\gamma x}$, when $\gamma = -\frac{1}{4}$.
\item If $0 < x \le \frac{1}{200}$ then $(1-x)^{(1-x)} \le x^{\gamma x}$, when $\gamma = -\frac{19}{100}$.
\item If $0 < x \le \frac{1}{750}$ then $(1-x)^{(1-x)} \le x^{\gamma x}$, when $\gamma = -\frac{23}{150}$.
\item If $0 < x \le \frac{1}{2000}$ then $(1-x)^{(1-x)} \le x^{\gamma x}$, when $\gamma = -\frac{33}{250}$.
\end{enumerate}
\end{lemma}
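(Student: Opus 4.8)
The plan is to reuse, essentially verbatim, the structural part of the proof of Lemma~\ref{lem:getting_rid_of_pesky_term}. For a negative constant~$\gamma$ set
\[
 g(x) = \gamma x \ln x + (1-x) \ln (1-x),
\]
so that $(1-x)^{-(1-x)} \le x^{\gamma x}$ (for $0 < x < 1$) holds if and only if $g(x) \ge 0$. The computations carried out there --- $\lim_{x \to 0^+} g(x) = 0$; $g'(x) = \gamma \ln x - \ln(1-x) + \gamma - 1$ with $\lim_{x \to 0^+} g'(x) = +\infty$; $g''(x) = \frac{\gamma}{x} + \frac{1}{1-x}$ with $\lim_{x \to 0^+} g''(x) = -\infty$; and $g'''(x) = -\frac{\gamma}{x^2} + \frac{1}{(1-x)^2} > 0$ on $(0,1)$ --- use only that $\gamma$ is negative, so they carry over unchanged. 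As in that proof, the sign of $g'''$ forces $g$ to be, on any interval $0 < x \le \delta$, either nondecreasing or to have a single interior local maximum, hence to attain its minimum at an endpoint; since $\lim_{x \to 0^+} g(x) = 0$ and $g(x) > 0$ for small positive~$x$, it follows that $g(x) \ge 0$ throughout $0 < x \le \delta$ provided only that $g(\delta) \ge 0$ and $g'(\delta) < 0$.

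It therefore suffices, for each of the nine cases, to substitute the stated pair $(\gamma,\delta)$ --- that is, $(-\tfrac{11}{20},\tfrac{9}{50})$, $(-\tfrac{9}{16},\tfrac{19}{100})$, $(-\tfrac{19}{40},\tfrac{7}{50})$, $(-\tfrac{2}{5},\tfrac{2}{25})$, $(-\tfrac{1}{3},\tfrac{1}{20})$, $(-\tfrac{1}{4},\tfrac{1}{53})$, $(-\tfrac{19}{100},\tfrac{1}{200})$, $(-\tfrac{23}{150},\tfrac{1}{750})$, and $(-\tfrac{33}{250},\tfrac{1}{2000})$ --- and to confirm by direct numerical estimation that $g'(\delta) < 0$ and $g(\delta) \ge 0$. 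Both are explicit closed forms in logarithms of small rationals, so each check is a short computation of the same flavour as the verifications $g'(\tfrac{5}{43}) < -0.3$ and $g(\tfrac{5}{43}) > 0.0008$ appearing in the proof of Lemma~\ref{lem:getting_rid_of_pesky_term}; I would tabulate the resulting numerical bounds and note that the accompanying Maple worksheet can be used to recheck them.

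The only real delicacy is that the structural reduction needs $g'(\delta) < 0$, and this fails once $\delta$ is too large relative to~$|\gamma|$: as $\gamma \to 0^-$ the largest admissible~$\delta$ shrinks, which is precisely why the later cases pair a smaller~$|\gamma|$ with a much smaller~$\delta$. Thus the substance of the lemma lies in having chosen, for each~$\gamma$, a~$\delta$ that is simultaneously small enough to keep $g'(\delta) < 0$ and (barely) still large enough that $g(\delta) \ge 0$; once such pairs are fixed, as they are in the statement, nothing beyond routine estimation remains. I would also remark, as with parts~(a) and~(b) of Lemma~\ref{lem:getting_rid_of_pesky_term}, that the cases are listed in order of decreasing interval length, and that further pairs interpolating between them can be produced by the identical method should they be needed elsewhere.
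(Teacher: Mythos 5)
Your proposal is correct and matches the paper's own proof almost word for word: the paper's argument for this lemma simply invokes the structural reduction from the proof of Lemma~\ref{lem:getting_rid_of_pesky_term} (that it suffices to verify $g'(\delta) < 0$ and $g(\delta) \ge 0$) and then tabulates those two numerical checks for each of the nine $(\gamma, \delta)$ pairs. Your remark about the tradeoff between $|\gamma|$ and the admissible~$\delta$ is also an accurate explanation of why the pairs in the lemma are chosen as they are.
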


\begin{proof}
Consider the function $g(x) = \gamma x \ln x + (1-x) \ln (1-x)$ when $\gamma$ is a negative constant
and $0 < x < 1$. As noted in the proof of Lemma~\ref{lem:getting_rid_of_pesky_term}, it suffices
(for $0 < \delta < 1$) to confirm that $g'(\delta) < 0$ and $g(\delta) \ge 0$ in order to confirm that
$g(\delta) \ge 0$ for $0 \le x \le \delta$. It then follows that $(1-x)^{-(1-x)} \le x^{\gamma x}$ for
$0 \le x \le \delta$ as well.

Part~(a) of the claim can now be established by choosing $\gamma = -\frac{11}{20}$ and $\delta = \frac{9}{50}$;
then $g'(\delta) = -\frac{11}{20} \ln \frac{9}{50} - \ln \frac{41}{50} - \frac{31}{20} < -0.4 < 0$ and
$g(\delta) = -\frac{99}{1000} \ln \frac{9}{50} + \frac{41}{50} \ln \frac{41}{50} > 0.007 > 0$, as required.

Part~(b) of the claim can be established by choosing $\gamma = -\frac{9}{16}$ and $\delta = \frac{19}{100}$; then
$g'(\delta) = -\frac{9}{16} \ln \frac{19}{100} - \ln \frac{81}{100} - \frac{25}{16} < - 0.4 < 0$ and
$g(\delta) = -\frac{171}{1600} \ln \frac{19}{100} + \frac{81}{100} \ln \frac{81}{100} > 0.0006 > 0$, as needed.

Part~(c) of the claim can be established by choosing $\gamma = -\frac{19}{40}$ and $\delta = \frac{7}{50}$; then
$g'(\delta) = -\frac{19}{40} \ln \frac{7}{50} - \ln \frac{43}{50} - \frac{59}{40} < - 0.3 < 0$ and
$g(\delta) = -\frac{133}{2000} \ln \frac{7}{50} + \frac{43}{50} \ln \frac{43}{50} > 0.001 > 0$, as needed.

Part~(d) of the claim can be established by choosing $\gamma = -\frac{2}{5}$ and $\delta = \frac{2}{25}$; then
$g'(\delta) = -\frac{2}{5} \ln \frac{2}{25} - \ln \frac{23}{25} - \frac{7}{5} < - 0.3 < 0$ and
$g(\delta) = -\frac{4}{125} \ln \frac{2}{25} + \frac{23}{25} \ln \frac{23}{25} > 0.004 > 0$, as needed.

Part~(e) of the claim can be established by choosing $\gamma = -\frac{1}{3}$ and $\delta = \frac{1}{20}$; then
$g'(\delta) = \frac{1}{3} \ln 20 - \ln \frac{19}{20} - \frac{4}{3} < - 0.28 < 0$ and
$g(\delta) = \frac{1}{60} \ln 20 + \frac{19}{20} \ln \frac{19}{20} > 0.001 > 0$, as needed.

Part~(f) of the claim can be established by choosing $\gamma = -\frac{1}{4}$ and $\delta = \frac{1}{53}$; then
$g'(\delta) = \frac{1}{4} \ln 53 - \ln \frac{52}{53} - \frac{5}{4} < - 0.23 < 0$ and
$g(\delta) = \frac{1}{212} \ln 53 + \frac{52}{53} \ln \frac{52}{53} > 0.00003 > 0$, as needed.

Part~(g) of the claim can be established by choosing $\gamma = -\frac{19}{100}$ and $\delta = \frac{1}{200}$; then
$g'(\delta) = \frac{19}{100} \ln 200 - \ln \frac{199}{200} - \frac{199}{200} < - 0.17 < 0$ and
$g(\delta) = \frac{19}{8000} \ln 80 + \frac{79}{80} \ln \frac{79}{80} > 0.00004 > 0$, as needed.

Part~(h) of the claim can be established by choosing $\gamma = -\frac{23}{150}$ and $\delta = \frac{1}{750}$; then
$g'(\delta) = \frac{23}{150} \ln 750 - \ln \frac{749}{750} - \frac{173}{150} < - 0.13 < 0$ and
$g(\delta) = \frac{23}{75000} \ln 500 + \frac{499}{500} \ln \frac{499}{500} > 0.00002 > 0$, as needed.

Part~(i) of the claim can be established by choosing $\gamma = -\frac{33}{250}$ and $\delta = \frac{1}{2000}$;
then $g'(\delta) = \frac{33}{250} \ln 2000 - \ln \frac{1999}{2000} - \frac{283}{250} < - 0.12 < 0$ and
$g(\delta) = \frac{33}{500000} \ln 2000 + \frac{1999}{2000} \ln \frac{1999}{2000} > 0.000001 > 0$, as needed.
\end{proof}

Similarly, one can extend Lemma~\ref{lem:getting_rid_of_power_of_q} as follows.

\begin{lemma}
\label{lem:continuing_to_get_rid_of_power_of_q}
Once again, consider the relationship between $\zeta^x$ and $x^{\delta x}$ when $\zeta$ and~$\delta$ are
positive constants.
\begin{enumerate}
\renewcommand{\labelenumi}{\text{\textup{(\alph{enumi})}}}
\setlength{\itemsep}{0pt}
\item If $0 < x \le \frac{1}{5}$ then $3^x \le x^{\delta x}$ when $\delta = -\frac{7}{10}$.
\item If $0 < x \le \frac{1}{5}$ then $4^x \le x^{\delta x}$ when $\delta = -\frac{9}{10}$.
\item If $0 < x \le \frac{1}{5}$ then $6^x \le x^{\delta x}$ when $\delta = -\frac{6}{5}$.
\item If $0 < x \le \frac{1}{5}$ then $7^x \le x^{\delta x}$ when $\delta = -\frac{61}{50}$.
\item If $0 < x \le \frac{9}{50}$ then $8^x \le x^{\delta x}$ when $\delta = -\frac{5}{4}$.
\item If $0 < x \le \frac{19}{100}$ then $10^x \le x^{\delta x}$ when $\delta = -\frac{7}{5}$.
\item If $0 < x \le \frac{9}{50}$ then $12^x \le x^{\delta x}$ when $\delta = -\frac{3}{2}$.
\item If $0 < x \le \frac{7}{50}$ then $15^x \le x^{\delta x}$ when $\delta = -\frac{7}{5}$.
\item If $0 < x \le \frac{2}{25}$ then $22^x \le x^{\delta x}$ when $\delta = -\frac{5}{4}$.
\item If $0 < x \le \frac{1}{20}$ then $30^x \le x^{\delta x}$ when $\delta = -\frac{8}{7}$.
\item If $0 < x \le \frac{1}{53}$ then $46^x \le x^{\delta x}$ when $\delta = -\frac{49}{50}$.
\item If $0 < x \le \frac{1}{200}$ then $60^x \le x^{\delta x}$ when $\delta = -\frac{4}{5}$.
\item If $0 < x \le \frac{1}{750}$ then $72^x \le x^{\delta x}$ when $\delta = -\frac{7}{10}$.
\item If $0 < x \le \frac{1}{2000}$ then $88^x \le x^{\delta x}$ when $\delta = - \frac{3}{5}$.
\end{enumerate}
\end{lemma}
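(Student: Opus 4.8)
The plan is to proceed exactly as in the proof of Lemma~\ref{lem:getting_rid_of_power_of_q}, treating all fourteen parts uniformly. Fix one part, with its constant~$\zeta$ and its (negative) exponent constant~$\delta$, and let $\rho$ denote the stated upper bound on~$x$ in that part (so $\rho$ is one of $\frac15,\frac{9}{50},\frac{19}{100},\dots$). Set
\[
 h(x) = \delta x \ln x - x \ln \zeta ,
\]
so that $e^{h(x)} = x^{\delta x}/\zeta^x$ and, since $\zeta^x > 0$ for $x > 0$, the inequality $\zeta^x \le x^{\delta x}$ holds on $0 < x \le \rho$ if and only if $h(x) \ge 0$ there. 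Everything established about $h$ in the earlier proof carries over verbatim, with no new computation: $\lim_{x \to 0^+} h(x) = 0$; $h'(x) = \delta \ln x + \delta - \ln \zeta$, with $\lim_{x \to 0^+} h'(x) = +\infty$ because $\delta < 0$, so $h$ is positive just to the right of~$0$; and $h''(x) = \delta/x < 0$ for $x > 0$. Concavity together with $h(0^+) = 0$ and $h$ positive near~$0$ forces $\{\,x > 0 : h(x) \ge 0\,\}$ to be an interval of the form $(0,\Gamma]$ (or all of $(0,\infty)$). Hence $h \ge 0$ on $(0,\rho]$ if and only if $h(\rho) \ge 0$.

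It therefore only remains to check, in each of the fourteen cases, that
\[
 h(\rho) = \rho\bigl(\delta \ln \rho - \ln \zeta\bigr) > 0 ,
\]
that is, that $\delta \ln \rho > \ln \zeta$ (the left side is positive since $\ln \rho < 0$ and $\delta < 0$). For part~(a), for instance, this reads $h(\frac15) = \frac15\bigl(\frac{7}{10}\ln 5 - \ln 3\bigr) > 0$, and the remaining thirteen parts are the same single evaluation with the listed triple $(\zeta,\delta,\rho)$. As in the neighbouring lemmas, each of these can be recorded together with a small explicit positive lower bound, so that only a few digits of the two logarithms involved are needed; the Maple worksheet cited in Subsection~\ref{ssec:bounding_rho0} can be used to confirm all of them.

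There is no genuine obstacle here beyond bookkeeping: the argument contributes no new idea over Lemma~\ref{lem:getting_rid_of_power_of_q}, and the roles of the exponent constant~$\delta$ and the interval endpoint~$\rho$ are already cleanly separated there. The one point demanding real care is that several of the endpoint values $h(\rho)$ are positive by only a narrow margin --- parts~(a), (d), (f), (h) and~(j) in particular --- so those final numerical verifications should be carried out carefully, or symbolically, rather than with low-precision estimates of the logarithms.
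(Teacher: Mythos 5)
Your proposal takes essentially the same route as the paper: reuse the function $h(x) = \delta x \ln x - x \ln \zeta$ and the concavity argument from the proof of Lemma~\ref{lem:getting_rid_of_power_of_q} to reduce each part to the single check $h(\rho) \ge 0$ at the interval endpoint, then verify that inequality numerically for each triple $(\zeta,\delta,\rho)$. The only substantive difference is presentational: the paper actually records each of the fourteen evaluations with an explicit positive lower bound (e.g.\ for part~(a), $h(\tfrac15) = \tfrac{7}{50}\ln 5 - \tfrac15 \ln 3 > 0.005$), whereas you defer them, noting correctly that several margins are thin. For the proof to be complete in the paper's sense you would want to supply those bounds, but the framework you describe is exactly the one used.
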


\begin{proof}
As explained in the proof of Lemma~\ref{lem:getting_rid_of_power_of_q}, it suffices to
consider the function $h(x) = \delta x \ln x - x \ln \zeta$ when $\delta$ is a negative constant and $\zeta$
is a positive one. As explained in that proof, if $h(\rho) \ge 0$ for another positive value~$\rho$ then
$h(x) \ge 0$ for $0 \le x \le \rho$ as well, and it follows that $\zeta^x \le x^{\delta x}$ for $0 < x \le \rho$ as
well.

Part~(a) of the claim can now be established by setting $\zeta = 3$, $\delta = -\frac{7}{10}$ and
$\rho = \frac{1}{5}$ and confirming that $h(\rho) = \frac{7}{50} \ln 5 - \frac{1}{5} \ln 3 > 0.005$.

Part~(b) of the claim can be established by setting $\zeta = 4$, $\delta = -\frac{9}{10}$, and 
$\rho = \frac{1}{5}$, and confirming that $h(\rho) = \frac{9}{50} \ln 5 - \frac{2}{5} \ln 2 > 0.01$.

Part~(c) of the claim can be established by setting $\zeta = 6$, $\delta = -\frac{6}{5}$, and
$\rho = \frac{1}{5}$, and confirming that $h(\rho) = \frac{6}{25} \ln 5 - \frac{1}{5} \ln 6 > 0.02$.

Part~(d) of the claim can be established by setting $\zeta = 7$, $\delta = -\frac{61}{50}$, and
$\rho = \frac{1}{5}$, and confirming that $h(\rho) = \frac{61}{250} \ln 5 - \frac{1}{5} \ln 7 > 0.003$.

Part~(e) of the claim can be established by setting $\zeta = 8$, $\delta = -\frac{5}{4}$, and
$\rho = \frac{9}{50}$, and confirming that $h(\rho) = -\frac{9}{40} \ln \frac{9}{50} - \frac{27}{50} \ln 2 > 0.01$.

Part~(f) of the claim can be established by setting $\zeta = 10$, $\delta = -\frac{7}{5}$, and
$\rho = \frac{19}{100}$, and confirming that $h(\rho) = -\frac{133}{500} \ln \frac{19}{100} - \frac{19}{100} \ln 10
> 0.004$.

Part~(g) of the claim can be established by setting $\zeta = 12$, $\delta = -\frac{3}{2}$, and
$\rho = \frac{9}{50}$, and confirming that $h(\rho) = -\frac{27}{100} \ln \frac{9}{50} - \frac{9}{50} \ln 12 > 0.01$.

Part~(h) of the claim can be established by setting $\zeta = 15$, $\delta = -\frac{7}{5}$, and
$\rho = \frac{7}{50}$, and confirming that $h(\rho) = -\frac{49}{250} \ln \frac{7}{50} - \frac{7}{50} \ln 15 > 0.006$.

Part~(i) of the claim can be established by setting $\zeta = 22$, $\delta = -\frac{5}{4}$, and
$\rho = \frac{2}{25}$, and confirming that $h(\rho) = -\frac{1}{10} \ln \frac{2}{25} - \frac{2}{25} \ln 22 > 0.005$.

Part~(j) of the claim can be established by setting $\zeta = 30$, $\delta = -\frac{8}{7}$, and
$\rho = \frac{1}{20}$, and confirming that $h(\rho) = \frac{2}{35} \ln 20 - \frac{1}{20} \ln 30 > 0.001$.

Part~(k) of the claim can be established by setting $\zeta = 46$, $\delta = -\frac{49}{50}$, and
$\rho = \frac{1}{53}$, and confirming that $h(\rho) = \frac{49}{2650} \ln 53 - \frac{1}{53} \ln 46 > 0.013$.

Part~(l) of the claim can be established by setting $\zeta = 60$, $\delta = -\frac{4}{5}$, and
$\rho = \frac{1}{200}$, and confirming that $h(\rho) = \frac{1}{250} \ln 200 - \frac{1}{200} \ln 60 > 0.0007$.

Part~(m) of the claim can be established by setting $\zeta = 72$, $\delta = -\frac{7}{10}$, and
$\rho = \frac{1}{750}$, and confirming that $h(\rho) = \frac{7}{7500} \ln 750 - \frac{1}{750} \ln 72 > 0.0004$.

Part~(n) of the claim can be established by setting $\zeta = 88$, $\delta = -\frac{3}{5}$, and
$\rho = \frac{1}{2000}$, and confirming that $h(\rho) = \frac{3}{1000} \ln 2000 - \frac{1}{2000} \ln 88 > 0.00004$.
\end{proof}

\subsection{Analysis for the Case $q=4$}

In this case it follows by the inequality at line~\eqref{eq:first_bound_for_rho0} that
\[
\rho_0 \le \sum_{1 \le j < k \beta_0} \frac{1}{\beta^{\beta} (1-\beta)^{1-\beta}} f(4)
  = \frac{3^{\beta}}{\beta^{\beta} (1-\beta)^{1-\beta}} \left( {\textstyle{\frac{1}{4}}} + \textstyle{\frac{3}{4}} (4n)^{-c_4 \beta} \right)
  \quad \text{when $q = 4$.}
\]

It will now be shown that
\begin{equation}
\label{eq:bound_for_4}
  \sum_{1 \le j < k \beta_0} \frac{3^{\beta}}{\beta^{\beta} (1-\beta)^{1-\beta}} \left( \frac{1}{4} +
   \frac{3}{4} (4n)^{-c_4 \beta} \right) \le \beta^{\beta}
\end{equation}
when $c_4 = \frac{25}{4}$ and $0 < \beta \le \frac{8}{25}$.

To begin, let us use the process described in Subsection~\ref{ssec:beta_is_tiny} to establish the above
inequality when $0 < \beta \le \frac{1}{5}$. It follows by part~(b) of
Lemma~\ref{lem:getting_rid_of_pesky_term} that $(1-x)^{-(1-x)} \le x^{\gamma x}$ when
$0 < x \le \frac{1}{5}$ and $\gamma = -\frac{23}{40}$, so that this value may be used for~$\gamma$.
It also follows  by part~(a) of Lemma~\ref{lem:continuing_to_get_rid_of_power_of_q} that $3^x \le x^{\delta x}$
when $0 < x \le \frac{1}{5}$ and $\delta = -\frac{7}{10}$, so that this value can be chosen for~$\delta$. If
$c_4 = \frac{25}{4}$ then
\[
 c - \left( \frac{2q}{q-1} - \frac{\delta}{q-1} - \frac{\gamma q}{q-1} \right) = \frac{31}{12} > 2.5,
\]
so that the process described in Subsection~\ref{ssec:beta_is_tiny} can be
applied with these values. Since $0 < \frac{1}{5} < \frac{1}{3} \le \frac{1}{e}$, it now suffices to note that
$f_1\left(\frac{1}{5}\right) > 0.03$ and $f_1\left(\frac{1}{3}\right) <-0.01$ --- for it then follows by
Lemma~\ref{lem:correctness_of_first_approximation} that the inequality at
line~\eqref{eq:bound_for_4} is satisfied when $0 < \beta \le \frac{1}{5}$.

The process described in Subsection~\ref{ssec:beta_is_moderate} can now be used to establish the above
inequality when $\frac{1}{5} \le \beta \le \frac{8}{25}$, completing the analysis for this case. Since $F_1'\left(\frac{8}{25}\right)
< -1.9$ the function $F_1$ is decreasing over this interval, for every choice of~$\eta$. Since
$\frac{8}{25} < \frac{17}{21} = 1 + \frac{1}{1 - (25/4)}$ and $F_2'\left(\frac{8}{25}\right) > 7.5$, the function~$F_2$
is increasing over this interval for every choice of~$\eta$.

It now suffices to confirm  that if $\eta = \frac{18}{19}$  then $F_1\left(\frac{31}{100}\right) > 0.009$ and
$F_2\left(\frac{1}{5}\right) > 0.04$, so that $F_1$ and~$F_2$ are both non-negative over the interval
$\frac{1}{5} \le \beta \le \frac{31}{100}$.

It then suffices to confirm  that if $\eta = \frac{49}{50}$  then $F_1\left(\frac{8}{25}\right) > 0.02$ and
$F_2\left(\frac{31}{100}\right) > 0.008$, so that $F_1$ and~$F_2$ are both non-negative over the interval
$\frac{31}{100} \le \beta \le \frac{8}{25}$, as needed to complete the proof.

\subsection{Analysis for the Case $q=5$}

In this case it follows by the inequality at line~\eqref{eq:first_bound_for_rho0} that
\[
\rho_0 \le \sum_{1 \le j < k \beta_0} \frac{1}{\beta^{\beta} (1-\beta)^{1-\beta}} f(5)
  = \frac{4^{\beta}}{\beta^{\beta} (1-\beta)^{1-\beta}} \left( {\textstyle{\frac{1}{5}}} + \textstyle{\frac{4}{5}} (5n)^{-c_4 \beta} \right)
  \quad \text{when $q = 5$.}
\]

It will now be shown that
\begin{equation}
\label{eq:bound_for_5}
  \sum_{1 \le j < k \beta_0} \frac{4^{\beta}}{\beta^{\beta} (1-\beta)^{1-\beta}} \left( \frac{1}{5} +
   \frac{4}{5} (5n)^{-c_4 \beta} \right) \le \beta^{\beta}
\end{equation}
when $c_4 = \frac{16}{3}$ and $0 < \beta \le \frac{3}{8}$.

The process described in Subsection~\ref{ssec:beta_is_tiny} will be first be used to establish the
above inequality when $0 < \beta \le \frac{1}{5}$.
Once again, it follows by part~(b) of
Lemma~\ref{lem:getting_rid_of_pesky_term} that $(1-x)^{-(1-x)} \le x^{\gamma x}$ when
$0 < x \le \frac{1}{5}$ and $\gamma = -\frac{23}{40}$, so that this value may be used for~$\gamma$.

It also follows  by part~(b) of Lemma~\ref{lem:continuing_to_get_rid_of_power_of_q} that $4^x \le x^{\delta x}$
when $0 < x \le \frac{1}{5}$ and $\delta = -\frac{9}{10}$, so that this value can be chosen for~$\delta$. If
$c_4 = \frac{16}{3}$ then
\[
 c - 2 + {\textstyle{\frac{\delta}{q-1} + \frac{\gamma q}{q-1}}} ={\textstyle{\frac{907}{480}}} > 1.8,
\]
so that the first process to verify the above relationship, described in Section~\ref{sec:modified_proof}, can be
applied with these values. Since $\frac{1}{5} < \frac{1}{3}$ it suffices to note that
$f_1\left(\frac{1}{5}\right) > 0.04$ and $f_1\left(\frac{1}{3}\right) < -0.003$ --- for it then follows
by Lemma~\ref{lem:correctness_of_first_approximation} that the inequality at
line~\eqref{eq:bound_for_5} is satisfied when $0 < \beta \le \frac{1}{5}$.

The process described in Subsection~\ref{ssec:beta_is_moderate} can now be used to establish
the above inequality when $\frac{1}{5} < \beta \le \frac{3}{8}$, as needed to establish the claimed result.
Since $F_1'\left(\frac{3}{8}\right) < -1.8$, the function $F_1$ is decreasing over this interval, for
every choice of~$\eta$. Since $\frac{3}{8} < \frac{10}{13} = 1 + \frac{1}{1-(16/3)}$ and
$F_2'\left(\frac{3}{8}\right) > 6.6$, the function~$F_2$ is increasing over this interval for every choice
of~$\eta$.

It now suffices to confirm that if $\eta = \frac{11}{12}$ then $F_1\left(\frac{9}{25}\right) > 0.002$ and
$F_2\left(\frac{1}{5}\right) > 0.07$, so that $F_1$ and~$F_2$ are both non-negative over the interval
$\frac{1}{5} \le \beta \le \frac{9}{25}$.

It then suffices to confirm that if $\eta = \frac{19}{20}$ then $F_1\left(\frac{3}{8}\right) > 0.008$ and
$F_2\left(\frac{9}{25}\right) > 0.75$, so that $F_1$ and~$F_2$ are both non-negative over the
interval $\frac{9}{25} \le \beta \le \frac{3}{8}$, as needed to establish the claim when $q=5$.

\subsection{Analysis for the Case $q=7$}

In this case it follows by the inequality at line~\eqref{eq:first_bound_for_rho0} that
\[
\rho_0 \le \sum_{1 \le j < k \beta_0} \frac{1}{\beta^{\beta} (1-\beta)^{1-\beta}} f(7)
  = \frac{6^{\beta}}{\beta^{\beta} (1-\beta)^{1-\beta}} \left( {\textstyle{\frac{1}{7}}} + \textstyle{\frac{6}{7}} (7n)^{-c_4 \beta} \right)
  \quad \text{when $q = 7$.}
\]

It will now be shown that
\begin{equation}
\label{eq:bound_for_7}
  \sum_{1 \le j < k \beta_0} \frac{6^{\beta}}{\beta^{\beta} (1-\beta)^{1-\beta}} \left( \frac{1}{7} +
   \frac{6}{7} (7n)^{-c_4 \beta} \right) \le \beta^{\beta}
\end{equation}
when $c_4 = \frac{13}{3}$ and $0 < \beta \le \frac{6}{13}$.

The process described in Subsection~\ref{ssec:beta_is_tiny} will be first be used to establish the
above inequality when $0 < \beta \le \frac{1}{5}$.
Once again, it follows by part~(b) of
Lemma~\ref{lem:getting_rid_of_pesky_term} that $(1-x)^{-(1-x)} \le x^{\gamma x}$ when
$0 < x \le \frac{1}{5}$ and $\gamma = -\frac{23}{40}$, so that this value may be used for~$\gamma$.

It also follows  by part~(c) of Lemma~\ref{lem:continuing_to_get_rid_of_power_of_q} that $6^x \le x^{\delta x}$
when $0 < x \le \frac{1}{5}$ and $\delta = -\frac{6}{5}$, so that this value can be chosen for~$\delta$. If
$c_4 = \frac{13}{3}$ then
\[
 c - 2 + {\textstyle{\frac{\delta}{q-1} + \frac{\gamma q}{q-1}}} ={\textstyle{\frac{271}{240}}} > 1.1.
\]
so that the first process to verify the above relationship, described in Section~\ref{sec:modified_proof}, can be
applied with these values. Since $\frac{1}{5} < \frac{1}{3}$ it suffices to note that
$f_1\left(\frac{1}{5}\right) > 0.02$ and $f_1\left(\frac{1}{3}\right) < -0.01$ --- for it then follows
by Lemma~\ref{lem:correctness_of_first_approximation} that the inequality at
line~\eqref{eq:bound_for_7} is satisfied when $0 < \beta \le \frac{1}{5}$.

The process described in Subsection~\ref{ssec:beta_is_moderate} can now be used to establish
the above inequality when $\frac{1}{5} < \beta \le \frac{6}{13}$, as needed to establish the claimed result.
Since $F_1'\left(\frac{6}{13}\right) < -1.7$, the function $F_1$ is decreasing over this interval, for
every choice of~$\eta$. Since $\frac{6}{13} < \frac{7}{10} = 1 + \frac{1}{1-(13/3)}$ and
$F_2'\left(\frac{6}{13}\right) > 5.7$, the function~$F_2$ is increasing over this interval for every choice
of~$\eta$.

It now suffices to confirm that if $\eta = \frac{6}{7}$ then $F_1\left(\frac{2}{5}\right) > 0.03$ and
$F_2\left(\frac{1}{5}\right) > 0.01$, so that $F_1$ and~$F_2$ are both non-negative over the interval
$\frac{1}{5} \le \beta \le \frac{2}{5}$.

It then suffices to confirm that if $\eta = \frac{24}{25}$ then $F_1\left(\frac{6}{13}\right) > 0.03$ and
$F_2\left(\frac{2}{5}\right) > 0.14$, so that $F_1$ and~$F_2$ are both non-negative over the
interval $\frac{2}{5} \le \beta \le \frac{6}{13}$, as needed to establish the claim when $q=7$.

\subsection{Analysis for the Case $q=8$}

In this case it follows by the inequality at line~\eqref{eq:first_bound_for_rho0} that
\[
\rho_0 \le \sum_{1 \le j < k \beta_0} \frac{1}{\beta^{\beta} (1-\beta)^{1-\beta}} f(8)
  = \frac{7^{\beta}}{\beta^{\beta} (1-\beta)^{1-\beta}} \left( {\textstyle{\frac{1}{8}}} + \textstyle{\frac{7}{8}} (8n)^{-c_4 \beta} \right)
  \quad \text{when $q = 8$.}
\]

It will now be shown that
\begin{equation}
\label{eq:bound_for_8}
  \sum_{1 \le j < k \beta_0} \frac{7^{\beta}}{\beta^{\beta} (1-\beta)^{1-\beta}} \left( \frac{1}{8} +
   \frac{7}{8} (8n)^{-c_4 \beta} \right) \le \beta^{\beta}
\end{equation}
when $c_4 = 4$ and $0 < x \le \frac{1}{2}$.

The process described in Subsection~\ref{ssec:beta_is_tiny} will be first be used to establish the
above inequality when $0 < \beta \le \frac{1}{5}$. Once again, it follows by part~(a) of
Lemma~\ref{lem:getting_rid_of_pesky_term} that $(1-x)^{-(1-x)} \le x^{\gamma x}$ when
$0 < x \le \frac{1}{5}$ and $\gamma = -\frac{23}{40}$, so that this value may be used for~$\gamma$.

It also follows  by part~(d) of Lemma~\ref{lem:continuing_to_get_rid_of_power_of_q} that $7^x \le x^{\delta x}$
when $0 < x \le \frac{1}{5}$ and $\delta = -\frac{61}{50}$, so that this value can be chosen for~$\delta$. If
$c_4 = \frac{1}{2}$ then
\[
 c - 2 + {\textstyle{\frac{\delta}{q-1} + \frac{\gamma q}{q-1}}} ={\textstyle{\frac{309}{350}}} > 0.88,
\]
so that the first process to verify the above relationship, described in Section~\ref{sec:modified_proof}, can be
applied with these values. Since $\frac{1}{5} < \frac{1}{3}$ it suffices to note that
$f_1\left(\frac{1}{5}\right) > 0.01$ and $f_1\left(\frac{1}{3}\right) < -0.01$ --- for it then follows
by Lemma~\ref{lem:correctness_of_first_approximation} that the inequality at
line~\eqref{eq:bound_for_8} is satisfied when $0 < \beta \le \frac{1}{5}$.

The process described in Subsection~\ref{ssec:beta_is_moderate} can now be used to establish
the above inequality when $\frac{1}{5} < \beta \le \frac{1}{2}$, as needed to establish the claimed result.
Since $F_1'\left(\frac{1}{2}\right) < -1.6$, the function $F_1$ is decreasing over this interval, for
every choice of~$\eta$. Since $\frac{1}{2} < \frac{2}{3} = 1 + \frac{1}{1-4}$ and
$F_2'\left(\frac{1}{2}\right) > 5.4$, the function~$F_2$ is increasing over this interval for every choice
of~$\eta$.

It now suffices to confirm that if $\eta = \frac{5}{6}$ then $F_1\left(\frac{21}{50}\right) > 0.03$ and
$F_2\left(\frac{1}{5}\right) > 0.08$, so that $F_1$ and~$F_2$ are both non-negative over the interval
$\frac{1}{5} \le \beta \le \frac{21}{50}$.

It then suffices to confirm that if $\eta = \frac{24}{25}$ then $F_1\left(\frac{1}{2}\right) > 0.02$ and
$F_2\left(\frac{21}{50}\right) > 0.003$, so that $F_1$ and~$F_2$ are both non-negative over the
interval $\frac{21}{50} \le \beta \le \frac{1}{2}$, as needed to establish the claim when $q=8$.

\subsection{Analysis for the Case $q=9$}

In this case it follows by the inequality at line~\eqref{eq:first_bound_for_rho0} that
\[
\rho_0 \le \sum_{1 \le j < k \beta_0} \frac{1}{\beta^{\beta} (1-\beta)^{1-\beta}} f(9)
  = \frac{8^{\beta}}{\beta^{\beta} (1-\beta)^{1-\beta}} \left( {\textstyle{\frac{1}{9}}} + \textstyle{\frac{8}{9}} (9n)^{-c_4 \beta} \right)
  \quad \text{when $q = 9$.}
\]

It will now be shown that
\begin{equation}
\label{eq:bound_for_9}
  \sum_{1 \le j < k \beta_0} \frac{8^{\beta}}{\beta^{\beta} (1-\beta)^{1-\beta}} \left( \frac{1}{9} +
   \frac{8}{9} (9n)^{-c_4 \beta} \right) \le \beta^{\beta}
\end{equation}
when $c_4 = \frac{11}{3}$ and $0 < x \le \frac{6}{11}$.

The process described in Subsection~\ref{ssec:beta_is_tiny} will be first be used to establish the
above inequality when $0 < \beta \le \frac{9}{50}$. It follows by part~(a) of
Lemma~\ref{lem:continuing_to_get_rid_of_pesky_term} that $(1-x)^{-(1-x)} \le x^{\gamma x}$ when
$0 < x \le \frac{9}{50}$ and $\gamma = -\frac{11}{20}$, so that this value may be used for~$\gamma$.

It also follows  by part~(e) of Lemma~\ref{lem:continuing_to_get_rid_of_power_of_q} that $8^x \le x^{\delta x}$
when $0 < x \le \frac{9}{50}$ and $\delta = -\frac{5}{40}$, so that this value can be chosen for~$\delta$. If
$c_4 = \frac{11}{3}$ then
\[
 c - 2 + {\textstyle{\frac{\delta}{q-1} + \frac{\gamma q}{q-1}}} ={\textstyle{\frac{77}{120}}} > 0.64,
\]
so that the first process to verify the above relationship, described in Section~\ref{sec:modified_proof}, can be
applied with these values. Since $\frac{9}{50} < \frac{1}{4}$ it suffices to note that
$f_1\left(\frac{9}{50}\right) > 0.008$ and $f_1\left(\frac{1}{4}\right) < -0.01$ --- for it then follows
by Lemma~\ref{lem:correctness_of_first_approximation} that the inequality at
line~\eqref{eq:bound_for_9} is satisfied when $0 < \beta \le \frac{9}{50}$.

The process described in Subsection~\ref{ssec:beta_is_moderate} can now be used to establish
the above inequality when $\frac{9}{50} < \beta \le \frac{6}{11}$, as needed to establish the claimed result.
Since $F_1'\left(\frac{6}{11}\right) < -1.5$, the function $F_1$ is decreasing over this interval, for
every choice of~$\eta$. Since $\frac{6}{11} < \frac{5}{8} = 1 + \frac{1}{1-(11/3)}$ and
$F_2'\left(\frac{6}{11}\right) > 5.1$, the function~$F_2$ is increasing over this interval for every choice
of~$\eta$.

It now suffices to confirm that if $\eta = \frac{39}{50}$ then $F_1\left(\frac{21}{50}\right) > 0.03$ and
$F_2\left(\frac{9}{50}\right) > 0.03$, so that $F_1$ and~$F_2$ are both non-negative over the interval
$\frac{9}{50} \le \beta \le \frac{21}{50}$.

It then suffices to confirm that if $\eta = \frac{17}{18}$ then $F_1\left(\frac{53}{100}\right) > 0.01$ and
$F_2\left(\frac{21}{50}\right) > 0.02$, so that $F_1$ and~$F_2$ are both non-negative over the
interval $\frac{21}{50} \le \beta \le \frac{53}{100}$.

Finally, it suffices to confirm that if $\eta = \frac{31}{32}$ then $F_1\left(\frac{6}{11}\right) > 0.01$ and
$F_2\left(\frac{53}{100}\right) > 0.02$, so that $F_1$ and~$F_2$ are both non-negative over the
interval $\frac{53}{100} \le x \le \frac{6}{11}$, as needed to establish the claim when $q=9$.

\subsection{Analysis for the Case $q=11$}

In this case it follows by the inequality at line~\eqref{eq:first_bound_for_rho0} that
\[
\rho_0 \le \sum_{1 \le j < k \beta_0} \frac{1}{\beta^{\beta} (1-\beta)^{1-\beta}} f(11)
  = \frac{10^{\beta}}{\beta^{\beta} (1-\beta)^{1-\beta}} \left( {\textstyle{\frac{1}{11}}}
    + \textstyle{\frac{10}{11}} (11n)^{-c_4 \beta} \right)
  \quad \text{when $q = 11$.}
\]

It will now be shown that
\begin{equation}
\label{eq:bound_for_11}
  \sum_{1 \le j < k \beta_0} \frac{10^{\beta}}{\beta^{\beta} (1-\beta)^{1-\beta}} \left( \frac{1}{11} +
   \frac{10}{11} (4n)^{-c_4 \beta} \right) \le \beta^{\beta}
\end{equation}
when $c_4 = \frac{7}{2}$ and $0 < x \le \frac{4}{7}$.

The process described in Subsection~\ref{ssec:beta_is_tiny} will be first be used to establish the
above inequality when $0 < \beta \le \frac{19}{100}$. It follows by part~(b) of
Lemma~\ref{lem:continuing_to_get_rid_of_pesky_term} that $(1-x)^{-(1-x)} \le x^{\gamma x}$ when
$0 < x \le \frac{19}{100}$ and $\gamma = -\frac{9}{16}$, so that this value may be used for~$\gamma$.

It also follows  by part~(f) of Lemma~\ref{lem:continuing_to_get_rid_of_power_of_q} that $10^x \le x^{\delta x}$
when $0 < x \le \frac{19}{100}$ and $\delta = -\frac{7}{5}$, so that this value can be chosen for~$\delta$. If
$c_4 = \frac{7}{2}$ then
\[
 c - 2 + {\textstyle{\frac{\delta}{q-1} + \frac{\gamma q}{q-1}}} ={\textstyle{\frac{433}{800}}} > 0.54,
\]
so that the first process to verify the above relationship, described in Section~\ref{sec:modified_proof}, can be
applied with these values. Since $\frac{19}{100} < \frac{1}{4}$ it suffices to note that
$f_1\left(\frac{19}{100}\right) > 0.004$ and $f_1\left(\frac{1}{4}\right) < -0.01$ --- for it then follows
by Lemma~\ref{lem:correctness_of_first_approximation} that the inequality at
line~\eqref{eq:bound_for_11} is satisfied when $0 < \beta \le \frac{19}{100}$.

The process described in Subsection~\ref{ssec:beta_is_moderate} can now be used to establish
the above inequality when $\frac{19}{100} < \beta \le \frac{4}{7}$, as needed to establish the claimed result.
Since $F_1'\left(\frac{4}{7}\right) < -1.5$, the function $F_1$ is decreasing over this interval, for
every choice of~$\eta$. Since $\frac{4}{7} < \frac{3}{5} = 1 + \frac{1}{1-(7/2)}$ and
$F_2'\left(\frac{4}{7}\right) > 5.2$, the function~$F_2$ is increasing over this interval for every choice
of~$\eta$.

It now suffices to confirm that if $\eta = \frac{39}{50}$ then $F_1\left(\frac{19}{40}\right) > 0.01$ and
$F_2\left(\frac{19}{100}\right) > 0.04$, so that $F_1$ and~$F_2$ are both non-negative over the interval
$\frac{19}{100} \le \beta \le \frac{19}{40}$.

Finally, it suffices to confirm that if $\eta = \frac{23}{24}$ then $F_1\left(\frac{4}{7}\right) > 0.03$ and
$F_2\left(\frac{19}{40}\right) > 0.002$, so that $F_1$ and~$F_2$ are both non-negative over the
interval $\frac{19}{40} \le x \le \frac{4}{7}$, as needed to establish the claim when $q=11$.

\subsection{Analysis for the Case $q = 13$}

In this case it follows by the inequality at line~\eqref{eq:first_bound_for_rho0} that
\[
\rho_0 \le \sum_{1 \le j < k \beta_0} \frac{1}{\beta^{\beta} (1-\beta)^{1-\beta}} f(13)
  = \frac{12^{\beta}}{\beta^{\beta} (1-\beta)^{1-\beta}} \left( {\textstyle{\frac{1}{13}}}
    + \textstyle{\frac{12}{13}} (13n)^{-c_4 \beta} \right)
  \quad \text{when $q = 13$.}
\]

It will now be shown that
\begin{equation}
\label{eq:bound_for_13}
  \sum_{1 \le j < k \beta_0} \frac{12^{\beta}}{\beta^{\beta} (1-\beta)^{1-\beta}} \left( \frac{1}{13} +
   \frac{12}{13} (13n)^{-c_4 \beta} \right) \le \beta^{\beta}
\end{equation}
when $c_4 = \frac{10}{3}$ and $0 < x \le \frac{3}{5}$.

The process described in Subsection~\ref{ssec:beta_is_tiny} will be first be used to establish the
above inequality when $0 < \beta \le \frac{9}{500}$. It follows by part~(a) of
Lemma~\ref{lem:continuing_to_get_rid_of_pesky_term} that $(1-x)^{-(1-x)} \le x^{\gamma x}$ when
$0 < x \le \frac{9}{50}$ and $\gamma = -\frac{11}{20}$, so that this value may be used for~$\gamma$.

It also follows  by part~(g) of Lemma~\ref{lem:continuing_to_get_rid_of_power_of_q} that $12^x \le x^{\delta x}$
when $0 < x \le \frac{9}{50}$ and $\delta = -\frac{3}{2}$, so that this value can be chosen for~$\delta$. If
$c_4 = \frac{10}{3}$ then
\[
 c - 2 + {\textstyle{\frac{\delta}{q-1} + \frac{\gamma q}{q-1}}} ={\textstyle{\frac{107}{240}}} > 0.44,
\]
so that the first process to verify the above relationship, described in Section~\ref{sec:modified_proof}, can be
applied with these values. Since $\frac{9}{50} < \frac{1}{5}$ it suffices to note that
$f_1\left(\frac{9}{50}\right) > 0.04$ and $f_1\left(\frac{1}{5}\right) < -0.0004$ --- for it then follows
by Lemma~\ref{lem:correctness_of_first_approximation} that the inequality at
line~\eqref{eq:bound_for_13} is satisfied when $0 < \beta \le \frac{9}{50}$.

The process described in Subsection~\ref{ssec:beta_is_moderate} can now be used to establish
the above inequality when $\frac{9}{50} < \beta \le \frac{3}{5}$, as needed to establish the claimed result.
Since $F_1'\left(\frac{3}{5}\right) < -1.5$, the function $F_1$ is decreasing over this interval, for
every choice of~$\eta$. Since $\frac{3}{5} > \frac{4}{7} = 1 + \frac{1}{1-(10/3)}$ and
$F_2'\left(\frac{4}{7}\right) > 5.3$, the function~$F_2$ is increasing over this interval for every choice
of~$\eta$.

It now suffices to confirm that if $\eta = \frac{37}{50}$ then $F_1\left(\frac{24}{50}\right) > 0.02$ and
$F_2\left(\frac{24}{50}\right) > 0.07$, so that $F_1$ and~$F_2$ are both non-negative over the interval
$\frac{9}{50} \le \beta \le \frac{24}{50}$.

It next suffices to confirm that if $\eta = \frac{23}{25}$ then $F_1\left(\frac{3}{5}\right) > 0.01$ and
$F_2\left(\frac{9}{50}\right) > 0.5$, so that $F_1$ and~$F_2$ are both non-negative over the interval
$\frac{24}{50} \le \beta \le \frac{3}{5}$, as needed to establish the claim when $q=13$.

\subsection{Analysis for the Case $16 \le q \le 19$}

Suppose, first, that $q=16$.
In this case it follows by the inequality at line~\eqref{eq:first_bound_for_rho0} that
\[
\rho_0 \le \sum_{1 \le j < k \beta_0} \frac{1}{\beta^{\beta} (1-\beta)^{1-\beta}} f(16)
  = \frac{15^{\beta}}{\beta^{\beta} (1-\beta)^{1-\beta}} \left( {\textstyle{\frac{1}{16}}}
     + \textstyle{\frac{15}{4}} (16n)^{-c_4 \beta} \right)
  \quad \text{when $q = 16$.}
\]

It will now be shown that
\begin{equation}
\label{eq:bound_for_16}
  \sum_{1 \le j < k \beta_0} \frac{15^{\beta}}{\beta^{\beta} (1-\beta)^{1-\beta}} \left( \frac{1}{16} +
   \frac{15}{16} (16n)^{-c_4 \beta} \right) \le \beta^{\beta}
\end{equation}
when $c_4 = 3$ and $0 < x \le \beta_0 = \frac{2}{3}$. Since $c_4 = 3 \ge \frac{13}{6}$ and
$\beta_0 = \frac{2}{3} \le \frac{12}{13}$, it follows by Lemma~\ref{lem:f_is_decreasing} that
\[
 \rho_0 \le \sum_{1 \le j \le k \beta_0} \frac{1}{\beta^{\beta} (1-\beta)^{1-\beta}} f(q) \le \beta^{\beta}
\]
for $c_4 = 3$ and $0 < x \le \beta_0 = \frac{2}{3}$ when $17 \le q \le 19$ as well.

The process described in Subsection~\ref{ssec:beta_is_tiny} will be first be used to establish the
above inequality when $0 < \beta \le \frac{7}{50}$. It follows by part~(c) of
Lemma~\ref{lem:continuing_to_get_rid_of_pesky_term} that $(1-x)^{-(1-x)} \le x^{\gamma x}$ when
$0 < x \le \frac{7}{50}$ and $\gamma = -\frac{19}{40}$, so that this value may be used for~$\gamma$.

It also follows  by part~(h) of Lemma~\ref{lem:continuing_to_get_rid_of_power_of_q} that $15^x \le x^{\delta x}$
when $0 < x \le \frac{7}{50}$ and $\delta = -\frac{7}{5}$, so that this value can be chosen for~$\delta$. If
$c_4 = 3$ then
\[
 c - 2 + {\textstyle{\frac{\delta}{q-1} + \frac{\gamma q}{q-1}}} ={\textstyle{\frac{4}{15}}} > 0.26,
\]
so that the first process to verify the above relationship, described in Section~\ref{sec:modified_proof}, can be
applied with these values. Since $\frac{7}{50} < \frac{1}{5}$ it suffices to note that
$f_1\left(\frac{7}{50}\right) > 0.005$ and $f_1\left(\frac{1}{5}\right) < -0.006$ --- for it then follows
by Lemma~\ref{lem:correctness_of_first_approximation} that the inequality at
line~\eqref{eq:bound_for_16} is satisfied when $0 < \beta \le \frac{7}{50}$.

The process described in Subsection~\ref{ssec:beta_is_moderate} can now be used to establish
the above inequality when $\frac{7}{50} < \beta \le \frac{2}{3}$, as needed to establish the claimed result.
Since $F_1'\left(\frac{2}{3}\right) < -1.4$, the function $F_1$ is decreasing over this interval, for
every choice of~$\eta$. Since $\frac{2}{3} > \frac{1}{2} = 1 + \frac{1}{1- 3}$ and
$F_2'\left(\frac{1}{2}\right) > 4.9$, the function~$F_2$ is increasing over this interval for every choice
of~$\eta$.

It now suffices to confirm that if $\eta = \frac{3}{5}$ then $F_1\left(\frac{17}{40}\right) > 0.06$ and
$F_2\left(\frac{7}{50}\right) > 0.07$, so that $F_1$ and~$F_2$ are both non-negative over the interval
$\frac{7}{50} \le \beta \le \frac{17}{40}$.

It next suffices to confirm that if $\eta = \frac{4}{5}$ then $F_1\left(\frac{14}{25}\right) > 0.02$ and
$F_2\left(\frac{17}{40}\right) > 0.8$, so that $F_1$ and~$F_2$ are both non-negative over the interval
$\frac{17}{40} \le \beta \le \frac{14}{25}$.

Finally, it suffices to confirm that if $\eta = \frac{19}{20}$ then $F_1\left(\frac{2}{3}\right) > 0.009$ and
$F_2\left(\frac{14}{25}\right) > 0.17$, so that $F_1$ and~$F_2$ are both non-negative over the interval
$\frac{14}{25} \le \beta \le \frac{2}{3}$, as needed to establish the claim when $16 \le q \le 19$.

\subsection{Analysis for the Case $23 \le q \le 29$}

Suppose, first, that $q=23$.
In this case it follows by the inequality at line~\eqref{eq:first_bound_for_rho0} that
\[
\rho_0 \le \sum_{1 \le j < k \beta_0} \frac{1}{\beta^{\beta} (1-\beta)^{1-\beta}} f(23)
  = \frac{22^{\beta}}{\beta^{\beta} (1-\beta)^{1-\beta}} \left( {\textstyle{\frac{1}{23}}}
    + \textstyle{\frac{22}{23}} (23n)^{-c_4 \beta} \right)
  \quad \text{when $q = 23$.}
\]

It will now be shown that
\begin{equation}
\label{eq:bound_for_23}
  \sum_{1 \le j < k \beta_0} \frac{22^{\beta}}{\beta^{\beta} (1-\beta)^{1-\beta}} \left( \frac{1}{23} +
   \frac{22}{23} (23n)^{-c_4 \beta} \right) \le \beta^{\beta}
\end{equation}
when $c_4 = \frac{8}{3}$ and $0 < x \le \beta_0 = \frac{3}{4}$. Since $c_4 = \frac{8}{3} \ge \frac{13}{6}$ and
$\beta_0 = \frac{3}{4} \le \frac{12}{13}$, it follows by Lemma~\ref{lem:f_is_decreasing} that
\[
 \rho_0 \le \sum_{1 \le j \le k \beta_0} \frac{1}{\beta^{\beta} (1-\beta)^{1-\beta}} f(q) \le \beta^{\beta}
\]
for $c_4 = \frac{8}{3}$ and $0 < x \le \beta_0 = \frac{3}{4}$ when $25 \le q \le 29$ as well.

The process described in Subsection~\ref{ssec:beta_is_tiny} will be first be used to establish the
above inequality when $0 < \beta \le \frac{2}{25}$. It follows by part~(d) of
Lemma~\ref{lem:continuing_to_get_rid_of_pesky_term} that $(1-x)^{-(1-x)} \le x^{\gamma x}$ when
$0 < x \le \frac{2}{25}$ and $\gamma = -\frac{2}{5}$, so that this value may be used for~$\gamma$.

It also follows  by part~(i) of Lemma~\ref{lem:continuing_to_get_rid_of_power_of_q} that $22^x \le x^{\delta x}$
when $0 < x \le \frac{2}{25}$ and $\delta = -\frac{5}{4}$, so that this value can be chosen for~$\delta$. If
$c_4 = \frac{8}{3}$ then
\[
 c - 2 + {\textstyle{\frac{\delta}{q-1} + \frac{\gamma q}{q-1}}} ={\textstyle{\frac{133}{1320}}} > 0.1,
\]
so that the first process to verify the above relationship, described in Section~\ref{sec:modified_proof}, can be
applied with these values. Since $\frac{2}{25} < \frac{1}{10}$ it suffices to note that
$f_1\left(\frac{2}{25}\right) > 0.002$ and $f_1\left(\frac{1}{10}\right) < -0.0002$ --- for it then follows
by Lemma~\ref{lem:correctness_of_first_approximation} that the inequality at
line~\eqref{eq:bound_for_23} is satisfied when $0 < \beta \le \frac{2}{25}$.

The process described in Subsection~\ref{ssec:beta_is_moderate} can now be used to establish
the above inequality when $\frac{2}{25} < \beta \le \frac{3}{4}$, as needed to establish the claimed result.
Since $F_1'\left(\frac{3}{4}\right) < -1.2$, the function $F_1$ is decreasing over this interval, for
every choice of~$\eta$. Since $\frac{3}{4} > \frac{2}{5} = 1 + \frac{1}{1- (8/3)}$ and
$F_2'\left(\frac{2}{5}\right) > 4.7$, the function~$F_2$ is increasing over this interval for every choice
of~$\eta$.

It now suffices to confirm that if $\eta = \frac{2}{5}$ then $F_1\left(\frac{7}{20}\right) > 0.1$ and
$F_2\left(\frac{2}{25}\right) > 0.01$, so that $F_1$ and~$F_2$ are both non-negative over the interval
$\frac{2}{25} \le \beta \le \frac{7}{20}$.

It next suffices to confirm that if $\eta = \frac{21}{25}$ then $F_1\left(\frac{13}{20}\right) > 0.02$ and
$F_2\left(\frac{7}{20}\right) > 0.02$, so that $F_1$ and~$F_2$ are both non-negative over the interval
$\frac{7}{20} \le \beta \le \frac{13}{20}$.

It next suffices to confirm that if $\eta = \frac{24}{25}$ then $F_1\left(\frac{37}{50}\right) > 0.01$ and
$F_2\left(\frac{13}{20}\right) > 0.07$, so that $F_1$ and~$F_2$ are both non-negative over the interval
$\frac{13}{20} \le \beta \le \frac{37}{50}$.

Finally, it suffices to confirm that if $\eta = \frac{39}{40}$ then $F_1\left(\frac{3}{4}\right) > 0.01$ and
$F_2\left(\frac{37}{50}\right) > 0.05$, so that $F_1$ and~$F_2$ are both non-negative over the interval
$\frac{37}{50} \le \beta \le \frac{3}{4}$, as needed to establish the claim when $23 \le q \le 29$.

\subsection{Analysis for the Case $31 \le q \le 43$}

Suppose, first, that $q=31$.
In this case it follows by the inequality at line~\eqref{eq:first_bound_for_rho0} that
\[
\rho_0 \le \sum_{1 \le j < k \beta_0} \frac{1}{\beta^{\beta} (1-\beta)^{1-\beta}} f(31)
  = \frac{30^{\beta}}{\beta^{\beta} (1-\beta)^{1-\beta}} \left( {\textstyle{\frac{1}{31}}}
    + \textstyle{\frac{30}{31}} (31n)^{-c_4 \beta} \right)
  \quad \text{when $q = 31$.}
\]

It will now be shown that
\begin{equation}
\label{eq:bound_for_31}
  \sum_{1 \le j < k \beta_0} \frac{30^{\beta}}{\beta^{\beta} (1-\beta)^{1-\beta}} \left( \frac{1}{31} +
   \frac{30}{31} (31n)^{-c_4 \beta} \right) \le \beta^{\beta}
\end{equation}
when $c_4 = \frac{5}{2}$ and $0 < x \le \beta_0 = \frac{4}{5}$. Since $c_4 = \frac{5}{2} \ge \frac{13}{6}$ and
$\beta_0 = \frac{4}{5} \le \frac{12}{13}$, it follows by Lemma~\ref{lem:f_is_decreasing} that
\[
 \rho_0 \le \sum_{1 \le j \le k \beta_0} \frac{1}{\beta^{\beta} (1-\beta)^{1-\beta}} f(q) \le \beta^{\beta}
\]
for $c_4 = \frac{5}{2}$ and $0 < x \le \beta_0 = \frac{4}{5}$ when $32 \le q \le 43$ as well.

The process described in Subsection~\ref{ssec:beta_is_tiny} will be first be used to establish the
above inequality when $0 < \beta \le \frac{1}{20}$. It follows by part~(e) of
Lemma~\ref{lem:continuing_to_get_rid_of_pesky_term} that $(1-x)^{-(1-x)} \le x^{\gamma x}$ when
$0 < x \le \frac{1}{20}$ and $\gamma = -\frac{1}{3}$, so that this value may be used for~$\gamma$.

It also follows  by part~(j) of Lemma~\ref{lem:continuing_to_get_rid_of_power_of_q} that $30^x \le x^{\delta x}$
when $0 < x \le \frac{1}{20}$ and $\delta = -\frac{8}{7}$, so that this value can be chosen for~$\delta$. If
$c_4 = \frac{5}{2}$ then
\[
 c - 2 + {\textstyle{\frac{\delta}{q-1} + \frac{\gamma q}{q-1}}} ={\textstyle{\frac{16}{315}}} > 0.05,
\]
so that the first process to verify the above relationship, described in Section~\ref{sec:modified_proof}, can be
applied with these values. Since $\frac{1}{20} < \frac{1}{12}$ it suffices to note that
$f_1\left(\frac{1}{20}\right) > 0.001$ and $f_1\left(\frac{1}{12}\right) < -0.0009$ --- for it then follows
by Lemma~\ref{lem:correctness_of_first_approximation} that the inequality at
line~\eqref{eq:bound_for_31} is satisfied when $0 < \beta \le \frac{1}{20}$.

The process described in Subsection~\ref{ssec:beta_is_moderate} can now be used to establish
the above inequality when $\frac{1}{20} < \beta \le \frac{4}{5}$, as needed to establish the claimed result.
Since $F_1'\left(\frac{4}{5}\right) < -1.2$, the function $F_1$ is decreasing over this interval, for
every choice of~$\eta$. Since $\frac{4}{5} > \frac{1}{3} = 1 + \frac{1}{1- (5/2)}$ and
$F_2'\left(\frac{1}{3}\right) > 4.6$, the function~$F_2$ is increasing over this interval for every choice
of~$\eta$.

It now suffices to confirm that if $\eta = \frac{1}{5}$ then $F_1\left(\frac{1}{4}\right) > 0.06$ and
$F_2\left(\frac{1}{20}\right) > 0.09$, so that $F_1$ and~$F_2$ are both non-negative over the interval
$\frac{1}{20} \le \beta \le \frac{1}{4}$.

It next suffices to confirm that if $\eta = \frac{7}{10}$ then $F_1\left(\frac{3}{5}\right) > 0.05$ and
$F_2\left(\frac{1}{4}\right) > 0.08$, so that $F_1$ and~$F_2$ are both non-negative over the interval
$\frac{1}{4} \le \beta \le \frac{3}{5}$.

It next suffices to confirm that if $\eta = \frac{11}{12}$ then $F_1\left(\frac{3}{4}\right) > 0.01$ and
$F_2\left(\frac{3}{5}\right) > 0.4$, so that $F_1$ and~$F_2$ are both non-negative over the interval
$\frac{3}{5} \le \beta \le \frac{3}{4}$.

Finally, it suffices to confirm that if $\eta = \frac{29}{30}$ then $F_1\left(\frac{4}{5}\right) > 0.0002$ and
$F_2\left(\frac{3}{4}\right) > 0.2$, so that $F_1$ and~$F_2$ are both non-negative over the interval
$\frac{3}{4} \le \beta \le \frac{4}{5}$, as needed to establish the claim when $31 \le q \le 43$.

\subsection{Analysis for the Case $47 \le q \le 59$}

Suppose, first, that $q=47$.
In this case it follows by the inequality at line~\eqref{eq:first_bound_for_rho0} that
\[
\rho_0 \le \sum_{1 \le j < k \beta_0} \frac{1}{\beta^{\beta} (1-\beta)^{1-\beta}} f(47)
  = \frac{46^{\beta}}{\beta^{\beta} (1-\beta)^{1-\beta}} \left( {\textstyle{\frac{1}{47}}}
    + \textstyle{\frac{46}{47}} (47n)^{-c_4 \beta} \right)
  \quad \text{when $q = 47$.}
\]

It will now be shown that
\begin{equation}
\label{eq:bound_for_47}
  \sum_{1 \le j < k \beta_0} \frac{46^{\beta}}{\beta^{\beta} (1-\beta)^{1-\beta}} \left( \frac{1}{47} +
   \frac{46}{47} (47n)^{-c_4 \beta} \right) \le \beta^{\beta}
\end{equation}
when $c_4 = \frac{7}{3}$ and $0 < x \le \beta_0 = \frac{6}{7}$. Since $c_4 = \frac{7}{3} \ge \frac{13}{6}$ and
$\beta_0 = \frac{6}{7} \le \frac{12}{13}$, it follows by Lemma~\ref{lem:f_is_decreasing} that
\[
 \rho_0 \le \sum_{1 \le j \le k \beta_0} \frac{1}{\beta^{\beta} (1-\beta)^{1-\beta}} f(q) \le \beta^{\beta}
\]
for $c_4 = \frac{7}{3}$ and $0 < x \le \beta_0 = \frac{6}{7}$ when $49 \le q \le 59$ as well.

The process described in Subsection~\ref{ssec:beta_is_tiny} will be first be used to establish the
above inequality when $0 < \beta \le \frac{1}{53}$. It follows by part~(f) of
Lemma~\ref{lem:continuing_to_get_rid_of_pesky_term} that $(1-x)^{-(1-x)} \le x^{\gamma x}$ when
$0 < x \le \frac{1}{53}$ and $\gamma = -\frac{1}{4}$, so that this value may be used for~$\gamma$.

It also follows  by part~(k) of Lemma~\ref{lem:continuing_to_get_rid_of_power_of_q} that $46^x \le x^{\delta x}$
when $0 < x \le \frac{1}{53}$ and $\delta = -\frac{49}{50}$, so that this value can be chosen for~$\delta$. If
$c_4 = \frac{7}{3}$ then
\[
 c - 2 + {\textstyle{\frac{\delta}{q-1} + \frac{\gamma q}{q-1}}} ={\textstyle{\frac{181}{13800}}} > 0.013,
\]
so that the first process to verify the above relationship, described in Section~\ref{sec:modified_proof}, can be
applied with these values. Since $\frac{1}{53} < \frac{1}{35}$ it suffices to note that
$f_1\left(\frac{1}{53}\right) > 0.0002$ and $f_1\left(\frac{1}{35}\right) < -0.00001$ --- for it then follows
by Lemma~\ref{lem:correctness_of_first_approximation} that the inequality at
line~\eqref{eq:bound_for_47} is satisfied when $0 < \beta \le \frac{1}{53}$.

The process described in Subsection~\ref{ssec:beta_is_moderate} can now be used to establish
the above inequality when $\frac{1}{53} < \beta \le \frac{6}{7}$, as needed to establish the claimed result.
Since $F_1'\left(\frac{6}{7}\right) < -1.1$, the function $F_1$ is decreasing over this interval, for
every choice of~$\eta$. Since $\frac{6}{7} > \frac{1}{4} = 1 + \frac{1}{1- (7/3)}$ and
$F_2'\left(\frac{1}{4}\right) > 4.8$, the function~$F_2$ is increasing over this interval for every choice
of~$\eta$.

It now suffices to confirm that if $\eta = \frac{1}{9}$ then $F_1\left(\frac{1}{5}\right) > 0.06$ and
$F_2\left(\frac{1}{53}\right) > 0.007$, so that $F_1$ and~$F_2$ are both non-negative over the interval
$\frac{1}{53} \le \beta \le \frac{1}{5}$.

It next suffices to confirm that if $\eta = \frac{3}{5}$ then $F_1\left(\frac{3}{5}\right) > 0.06$ and
$F_2\left(\frac{1}{5}\right) > 0.06$, so that $F_1$ and~$F_2$ are both non-negative over the interval
$\frac{1}{5} \le \beta \le \frac{3}{5}$.

It next suffices to confirm that if $\eta = \frac{16}{17}$ then $F_1\left(\frac{4}{5}\right) > 0.04$ and
$F_2\left(\frac{3}{5}\right) > 0.01$, so that $F_1$ and~$F_2$ are both non-negative over the interval
$\frac{3}{5} \le \beta \le \frac{4}{5}$.

Finally, it suffices to confirm that if $\eta = \frac{44}{45}$ then $F_1\left(\frac{6}{7}\right) > 0.003$ and
$F_2\left(\frac{4}{5}\right) > 0.07$, so that $F_1$ and~$F_2$ are both non-negative over the interval
$\frac{4}{5} \le \beta \le \frac{6}{7}$, as needed to establish the claim when $47 \le q \le 59$.

\subsection{Analysis for the Case $61 \le q \le 71$}

Suppose, first, that $q=61$.
In this case it follows by the inequality at line~\eqref{eq:first_bound_for_rho0} that
\[
\rho_0 \le \sum_{1 \le j < k \beta_0} \frac{1}{\beta^{\beta} (1-\beta)^{1-\beta}} f(61)
  = \frac{60^{\beta}}{\beta^{\beta} (1-\beta)^{1-\beta}} \left( {\textstyle{\frac{1}{61}}}
    + \textstyle{\frac{60}{61}} (61n)^{-c_4 \beta} \right)
  \quad \text{when $q = 47$.}
\]

It will now be shown that
\begin{equation}
\label{eq:bound_for_61}
  \sum_{1 \le j < k \beta_0} \frac{60^{\beta}}{\beta^{\beta} (1-\beta)^{1-\beta}} \left( \frac{1}{61} +
   \frac{60}{61} (61n)^{-c_4 \beta} \right) \le \beta^{\beta}
\end{equation}
when $c_4 = \frac{9}{4}$ and $0 < x \le \beta_0 = \frac{8}{9}$. Since $c_4 = \frac{9}{4} \ge \frac{13}{6}$ and
$\beta_0 = \frac{8}{9} \le \frac{12}{13}$, it follows by Lemma~\ref{lem:f_is_decreasing} that
\[
 \rho_0 \le \sum_{1 \le j \le k \beta_0} \frac{1}{\beta^{\beta} (1-\beta)^{1-\beta}} f(q) \le \beta^{\beta}
\]
for $c_4 = \frac{9}{4}$ and $0 < x \le \beta_0 = \frac{8}{9}$ when $64 \le q \le 71$ as well.

The process described in Subsection~\ref{ssec:beta_is_tiny} will be first be used to establish the
above inequality when $0 < \beta \le \frac{1}{200}$. It follows by part~(g) of
Lemma~\ref{lem:continuing_to_get_rid_of_pesky_term} that $(1-x)^{-(1-x)} \le x^{\gamma x}$ when
$0 < x \le \frac{1}{200}$ and $\gamma = -\frac{19}{50}$, so that this value may be used for~$\gamma$.

It also follows  by part~(l) of Lemma~\ref{lem:continuing_to_get_rid_of_power_of_q} that $60^x \le x^{\delta x}$
when $0 < x \le \frac{1}{200}$ and $\delta = -\frac{4}{5}$, so that this value can be chosen for~$\delta$. If
$c_4 = \frac{9}{4}$ then
\[
 c - 2 + {\textstyle{\frac{\delta}{q-1} + \frac{\gamma q}{q-1}}} ={\textstyle{\frac{61}{6000}}} > 0.01,
\]
so that the first process to verify the above relationship, described in Section~\ref{sec:modified_proof}, can be
applied with these values. Since $\frac{1}{200} < \frac{1}{25}$ it suffices to note that
$f_1\left(\frac{1}{200}\right) > 0.0002$ and $f_1\left(\frac{1}{25}\right) < -0.0001$ --- for it then follows
by Lemma~\ref{lem:correctness_of_first_approximation} that the inequality at
line~\eqref{eq:bound_for_61} is satisfied when $0 < \beta \le \frac{1}{200}$.

The process described in Subsection~\ref{ssec:beta_is_moderate} can now be used to establish
the above inequality when $\frac{1}{200} < \beta \le \frac{8}{9}$, as needed to establish the claimed result.
Since $F_1'\left(\frac{8}{9}\right) < -1.1$, the function $F_1$ is decreasing over this interval, for
every choice of~$\eta$. Since $\frac{8}{9} > \frac{1}{5} = 1 + \frac{1}{1- (9/4)}$ and
$F_2'\left(\frac{1}{5}\right) > 4.5$, the function~$F_2$ is increasing over this interval for every choice
of~$\eta$.

It now suffices to confirm that if $\eta = \frac{1}{25}$ then $F_1\left(\frac{2}{25}\right) > 0.08$ and
$F_2\left(\frac{1}{200}\right) > 0.03$, so that $F_1$ and~$F_2$ are both non-negative over the interval
$\frac{1}{200} \le \beta \le \frac{2}{25}$.

It next suffices to confirm that if $\eta = \frac{1}{4}$ then $F_1\left(\frac{2}{5}\right) > 0.04$ and
$F_2\left(\frac{2}{25}\right) > 0.1$, so that $F_1$ and~$F_2$ are both non-negative over the interval
$\frac{2}{25} \le \beta \le \frac{2}{5}$.

It next suffices to confirm that if $\eta = \frac{5}{6}$ then $F_1\left(\frac{39}{50}\right) > 0.01$ and
$F_2\left(\frac{2}{5}\right) > 0.07$, so that $F_1$ and~$F_2$ are both non-negative over the interval
$\frac{2}{5} \le \beta \le \frac{39}{50}$.

It next suffices to confirm that if $\eta = \frac{41}{42}$ then $F_1\left(\frac{22}{25}\right) > 0.004$ and
$F_2\left(\frac{39}{50}\right) > 0.01$, so that $F_1$ and~$F_2$ are both non-negative over the interval
$\frac{39}{50} \le \beta \le \frac{22}{25}$.

Finally, it suffices to confirm that if $\eta = \frac{74}{75}$ then $F_1\left(\frac{8}{9}\right) > 0.004$ and
$F_2\left(\frac{22}{25}\right) > 0.009$, so that $F_1$ and~$F_2$ are both non-negative over the interval
$\frac{22}{25} \le \beta \le \frac{8}{9}$, as needed to establish the claim when $61 \le q \le 71$.

\subsection{Analysis for the Case $73 \le q \le 83$}

Suppose, first, that $q=73$.
In this case it follows by the inequality at line~\eqref{eq:first_bound_for_rho0} that
\[
\rho_0 \le \sum_{1 \le j < k \beta_0} \frac{1}{\beta^{\beta} (1-\beta)^{1-\beta}} f(73)
  = \frac{72^{\beta}}{\beta^{\beta} (1-\beta)^{1-\beta}} \left( {\textstyle{\frac{1}{73}}}
    + \textstyle{\frac{72}{73}} (73n)^{-c_4 \beta} \right)
  \quad \text{when $q = 73$.}
\]

It will now be shown that
\begin{equation}
\label{eq:bound_for_73}
  \sum_{1 \le j < k \beta_0} \frac{72^{\beta}}{\beta^{\beta} (1-\beta)^{1-\beta}} \left( \frac{1}{73} +
   \frac{72}{73} (73n)^{-c_4 \beta} \right) \le \beta^{\beta}
\end{equation}
when $c_4 = \frac{11}{5}$ and $0 < x \le \beta_0 = \frac{10}{11}$. Since $c_4 = \frac{11}{5} \ge \frac{13}{6}$ and
$\beta_0 = \frac{10}{11} \le \frac{12}{13}$, it follows by Lemma~\ref{lem:f_is_decreasing} that
\[
 \rho_0 \le \sum_{1 \le j \le k \beta_0} \frac{1}{\beta^{\beta} (1-\beta)^{1-\beta}} f(q) \le \beta^{\beta}
\]
for $c_4 = \frac{11}{5}$ and $0 < x \le \beta_0 = \frac{10}{11}$ when $79 \le q \le 83$ as well.

The process described in Subsection~\ref{ssec:beta_is_tiny} will be first be used to establish the
above inequality when $0 < \beta \le \frac{1}{750}$. It follows by part~(h) of
Lemma~\ref{lem:continuing_to_get_rid_of_pesky_term} that $(1-x)^{-(1-x)} \le x^{\gamma x}$ when
$0 < x \le \frac{1}{750}$ and $\gamma = -\frac{23}{150}$, so that this value may be used for~$\gamma$.

It also follows  by part~(m) of Lemma~\ref{lem:continuing_to_get_rid_of_power_of_q} that $72^x \le x^{\delta x}$
when $0 < x \le \frac{1}{750}$ and $\delta = -\frac{7}{10}$, so that this value can be chosen for~$\delta$. If
$c_4 = \frac{11}{5}$ then
\[
 c - 2 + {\textstyle{\frac{\delta}{q-1} + \frac{\gamma q}{q-1}}} ={\textstyle{\frac{19}{2700}}} > 0.07,
\]
so that the first process to verify the above relationship, described in Section~\ref{sec:modified_proof}, can be
applied with these values. Since $\frac{1}{750} < \frac{1}{30}$ it suffices to note that
$f_1\left(\frac{1}{750}\right) > 0.00005$ and $f_1\left(\frac{1}{30}\right) < -0.00002$ --- for it then follows
by Lemma~\ref{lem:correctness_of_first_approximation} that the inequality at
line~\eqref{eq:bound_for_73} is satisfied when $0 < \beta \le \frac{1}{750}$.

The process described in Subsection~\ref{ssec:beta_is_moderate} can now be used to establish
the above inequality when $\frac{1}{750} < \beta \le \frac{10}{11}$, as needed to establish the claimed result.
Since $F_1'\left(\frac{10}{11}\right) < -1$, the function $F_1$ is decreasing over this interval, for
every choice of~$\eta$. Since $\frac{10}{11} > \frac{1}{6} = 1 + \frac{1}{1- (11/5)}$ and
$F_2'\left(\frac{1}{6}\right) > 4.5$, the function~$F_2$ is increasing over this interval for every choice
of~$\eta$.

It now suffices to confirm that if $\eta = \frac{1}{50}$ then $F_1\left(\frac{1}{40}\right) > 0.06$ and
$F_2\left(\frac{1}{750}\right) > 0.0009$, so that $F_1$ and~$F_2$ are both non-negative over the interval
$\frac{1}{750} \le \beta \le \frac{1}{40}$.

It next suffices to confirm that if $\eta = \frac{1}{8}$ then $F_1\left(\frac{3}{11}\right) > 0.1$ and
$F_2\left(\frac{1}{40}\right) > 0.03$, so that $F_1$ and~$F_2$ are both non-negative over the interval
$\frac{1}{40} \le \beta \le \frac{3}{11}$.

It next suffices to confirm that if $\eta = \frac{2}{3}$ then $F_1\left(\frac{7}{10}\right) > 0.03$ and
$F_2\left(\frac{3}{11}\right) > 0.1$, so that $F_1$ and~$F_2$ are both non-negative over the interval
$\frac{2}{11} \le \beta \le \frac{7}{10}$.

It next suffices to confirm that if $\eta = \frac{26}{27}$ then $F_1\left(\frac{7}{8}\right) > 0.01$ and
$F_2\left(\frac{7}{10}\right) > 0.02$, so that $F_1$ and~$F_2$ are both non-negative over the interval
$\frac{7}{10} \le \beta \le \frac{7}{8}$.

It next suffices to confirm that if $\eta = \frac{72}{73}$ then $F_1\left(\frac{181}{200}\right) > 0.001$ and
$F_2\left(\frac{7}{8}\right) > 0.003$, so that $F_1$ and~$F_2$ are both non-negative over the interval
$\frac{7}{8} \le \beta \le \frac{181}{200}$.

It next suffices to confirm that if $\eta = \frac{87}{88}$ then $F_1\left(\frac{227}{250}\right) > 0.001$ and
$F_2\left(\frac{181}{200}\right) > 0.002$, so that $F_1$ and~$F_2$ are both non-negative over the interval
$\frac{181}{200} \le \beta \le \frac{227}{250}$.

It next suffices to confirm that if $\eta = \frac{88}{89}$ then $F_1\left(\frac{909}{1000}\right) > 0.00009$ and
$F_2\left(\frac{227}{250}\right) > 0.01$, so that $F_1$ and~$F_2$ are both non-negative over the interval
$\frac{227}{250} \le \beta \le \frac{909}{1000}$.

Finally, it suffices to confirm that if $\eta = \frac{89}{90}$ then $F_1\left(\frac{10}{11}\right) > 0.001$ and
$F_2\left(\frac{909}{1000}\right) > 0.005$, so that $F_1$ and~$F_2$ are both non-negative over the interval
$\frac{909}{1000} \le \beta \le \frac{10}{11}$, as needed to establish the claim when $73 \le q \le 83$.

\subsection{Analysis for the Case $q \ge 89$}

Suppose, first, that $q=89$.
In this case it follows by the inequality at line~\eqref{eq:first_bound_for_rho0} that
\[
\rho_0 \le \sum_{1 \le j < k \beta_0} \frac{1}{\beta^{\beta} (1-\beta)^{1-\beta}} f(89)
  = \frac{88^{\beta}}{\beta^{\beta} (1-\beta)^{1-\beta}} \left( {\textstyle{\frac{1}{89}}}
    + \textstyle{\frac{88}{89}} (89n)^{-c_4 \beta} \right)
  \quad \text{when $q = 89$.}
\]

It will now be shown that
\begin{equation}
\label{eq:bound_for_89}
  \sum_{1 \le j < k \beta_0} \frac{88^{\beta}}{\beta^{\beta} (1-\beta)^{1-\beta}} \left( \frac{1}{89} +
   \frac{88}{89} (89n)^{-c_4 \beta} \right) \le \beta^{\beta}
\end{equation}
when $c_4 = \frac{13}{6}$ and $0 < x \le \beta_0 = \frac{12}{13}$. It follows by Lemma~\ref{lem:f_is_decreasing}
that
\[
 \rho_0 \le \sum_{1 \le j \le k \beta_0} \frac{1}{\beta^{\beta} (1-\beta)^{1-\beta}} f(q) \le \beta^{\beta}
\]
for $c_4 = \frac{13}{6}$ and $0 < x \le \beta_0 = \frac{12}{13}$ when $q \ge 91$ as well.

The process described in Subsection~\ref{ssec:beta_is_tiny} will be first be used to establish the
above inequality when $0 < \beta \le \frac{1}{2000}$. It follows by part~(i) of
Lemma~\ref{lem:continuing_to_get_rid_of_pesky_term} that $(1-x)^{-(1-x)} \le x^{\gamma x}$ when
$0 < x \le \frac{1}{2000}$ and $\gamma = -\frac{33}{250}$, so that this value may be used for~$\gamma$.

It also follows  by part~(n) of Lemma~\ref{lem:continuing_to_get_rid_of_power_of_q} that $72^x \le x^{\delta x}$
when $0 < x \le \frac{1}{2000}$ and $\delta = -\frac{3}{5}$, so that this value can be chosen for~$\delta$. If
$c_4 = \frac{13}{6}$ then
\[
 c - 2 + {\textstyle{\frac{\delta}{q-1} + \frac{\gamma q}{q-1}}} ={\textstyle{\frac{239}{66000}}} > 0.003,
\]
so that the first process to verify the above relationship, described in Section~\ref{sec:modified_proof}, can be
applied with these values. Since $\frac{1}{2000} < \frac{1}{40}$ it suffices to note that
$f_1\left(\frac{1}{2000}\right) > 0.00001$ and $f_1\left(\frac{1}{40}\right) < -0.00005$ --- for it then follows
by Lemma~\ref{lem:correctness_of_first_approximation} that the inequality at
line~\eqref{eq:bound_for_89} is satisfied when $0 < \beta \le \frac{1}{2000}$.

The process described in Subsection~\ref{ssec:beta_is_moderate} can now be used to establish
the above inequality when $\frac{1}{2000} < \beta \le \frac{12}{13}$, as needed to establish the claimed result.
Since $F_1'\left(\frac{12}{13}\right) < -1$, the function $F_1$ is decreasing over this interval, for
every choice of~$\eta$. Since $\frac{12}{13} > \frac{1}{7} = 1 + \frac{1}{1- (13/6)}$ and
$F_2'\left(\frac{1}{7}\right) > 4.5$, the function~$F_2$ is increasing over this interval for every choice
of~$\eta$.

It now suffices to confirm that if $\eta = \frac{1}{80}$ then $F_1\left(\frac{1}{200}\right) > 0.02$ and
$F_2\left(\frac{1}{2000}\right) > 0.001$, so that $F_1$ and~$F_2$ are both non-negative over the interval
$\frac{1}{2000} \le \beta \le \frac{1}{200}$.

It next suffices to confirm that if $\eta = \frac{1}{28}$ then $F_1\left(\frac{11}{100}\right) > 0.07$ and
$F_2\left(\frac{1}{200}\right) > 0.0005$, so that $F_1$ and~$F_2$ are both non-negative over the interval
$\frac{1}{200} \le \beta \le \frac{11}{100}$.

It next suffices to confirm that if $\eta = \frac{2}{5}$ then $F_1\left(\frac{11}{20}\right) > 0.09$ and
$F_2\left(\frac{11}{100}\right) > 0.01$, so that $F_1$ and~$F_2$ are both non-negative over the interval
$\frac{11}{100} \le \beta \le \frac{11}{20}$.

It next suffices to confirm that if $\eta = \frac{12}{13}$ then $F_1\left(\frac{6}{7}\right) > 0.02$ and
$F_2\left(\frac{11}{20}\right) > 0.02$, so that $F_1$ and~$F_2$ are both non-negative over the interval
$\frac{11}{20} \le \beta \le \frac{6}{7}$.

It next suffices to confirm that if $\eta = \frac{69}{70}$ then $F_1\left(\frac{183}{200}\right) > 0.005$ and
$F_2\left(\frac{6}{7}\right) > 0.005$, so that $F_1$ and~$F_2$ are both non-negative over the interval
$\frac{6}{7} \le \beta \le \frac{183}{200}$.

Finally, it suffices to confirm that if $\eta = \frac{101}{100}$ then $F_1\left(\frac{12}{13}\right) > 0.0006$ and
$F_2\left(\frac{183}{200}\right) > 0.02$, so that $F_1$ and~$F_2$ are both non-negative over the interval
$\frac{183}{200} \le \beta \le \frac{12}{13}$, as needed to establish the claim when $q \ge 89$.

\end{document}